\numberwithin{equation}{section}
\title{ Sharp bounds for the cubic Parsell--Vinogradov system in two dimensions}
\author{Jean Bourgain, Ciprian Demeter, Shaoming Guo}
\date{}
\def\R{\mathbb{R}}
\def\N{\mathbb{N}}
\def\C{\mathbb{C}}\def\nint{\mathop{\diagup\kern-13.0pt\int}}
\def\lesim{\lesssim}
\def\beq{\begin{equation}}
\def\endeq{\end{equation}}
\def\bg{\begin{gathered}}
\def\eg{\end{gathered}}
\def\P{\Phi}
\def\w{w_{B_N}}
\def\83{\frac{8}{3}}
\def\38{\frac{3}{8}}
\def\mc{\mathcal}
\theoremstyle{plain}
\newtheorem{thm}{Theorem}[section]
\newtheorem{prop}[thm]{Proposition}
\newtheorem{lem}[thm]{Lemma}
\newtheorem{cor}[thm]{Corollary}
\newtheorem{defi}[thm]{Definition}
\newtheorem{claim}[thm]{Claim}
\newtheorem*{conj*}{Conjecture}
\newtheorem{conj}[thm]{Conjecture}
\newtheorem*{openproblem*}{Open Problem}
\begin{document}


\maketitle

\begin{abstract}
We prove a sharp decoupling for a certain two dimensional surface in $\R^9$. As an application, we obtain the full range of expected estimates for the cubic Parsell--Vinogradov system in two dimensions.
\end{abstract}

\smallskip
%
%
\let\thefootnote\relax\footnote{
The first author is partially supported by the NSF grant DMS-1301619. The second  author is partially supported  by the NSF Grant DMS-1161752\\
AMS subject classification: Primary 11L07; Secondary 42A45}

\section{Introduction and statements of main results}

For $d, s\ge 1$ and $k\ge 2$, consider the integer solutions
\beq\label{2305f1.5}
x_{1, j}, x_{2, j}, ..., x_{d, j}, y_{1, j}, y_{2, j}, ..., y_{d, j},\;1\le j\le s
\endeq
of the system of Diophantine equations (often referred to as the {\em Parsell--Vinogradov} system)
\beq\label{2305e1.5}
\sum_{j=1}^s x_{1, j}^{i_1} x_{2, j}^{i_2}... x_{d, j}^{i_d}=\sum_{j=1}^s y_{1 ,j}^{i_1} y_{2, j}^{i_2}... y_{d, j}^{i_d}.
\endeq
Here $0\le i_1, i_2, ..., i_d\le k$ are all integers such that $1\le i_1+i_2+...+i_d\le k$. For instance, when $d=1$, the system \eqref{2305e1.5} consists of one equation
\beq
\sum_{j=1}^s x_{j}^{i}=\sum_{j=1}^s y_{j}^{i}, \text{ with } 1\le i\le k.
\endeq
When $d=k=2$, the system \eqref{2305e1.5} becomes
\beq
\begin{split}
& x_{1, 1}+x_{1, 2}+...+x_{1, s}=y_{1, 1}+y_{1, 2}+...+y_{1, s},\\
& x_{2, 1}+x_{2, 2}+...+x_{2, s}=y_{2, 1}+y_{2, 2}+...+y_{2, s},\\
& x^2_{1, 1}+x^2_{1, 2}+...+x^2_{1, s}=y^2_{1, 1}+y^2_{1, 2}+...+y^2_{1, s},\\
& x^2_{2, 1}+x^2_{2, 2}+...+x^2_{2, s}=y^2_{2, 1}+y^2_{2, 2}+...+y^2_{2, s},\\
& x_{1, 1}x_{2, 1}+x_{1, 2} x_{2, 2}+...+x_{1, s}x_{2, s}=y_{1, 1}y_{2, 1}+y_{1, 2}y_{2, 2}+...+y_{1, s} y_{2, s}.
\end{split}
\endeq
Define
$$
\mc{K}_{j, k}=\frac{jk}{j+1} \binom{k+j}{j}.
$$
For large  $N$, we let $J_{s, d, k}(N)$ denote the number of integer solutions \eqref{2305f1.5} of the system of equations \eqref{2305e1.5} satisfying $1\le x_{1, j}, ..., x_{d, s}, y_{1, j}, ..., y_{d, j}\le N$ for each $1\le j\le s$. Parsell, Prendiville and Wooley provided a lower bound for $J_{s, d, k}(N)$.
\begin{thm}(\cite{Par})
We have the lower bound on the number of integer solutions of \eqref{2305e1.5}
\beq\label{2305e1.12}
J_{s, d, k}(N)\gtrsim N^{sd}+\sum_{j=1}^d N^{(2s-1)j+d-\mc{K}_{j, k}}.
\endeq
Here the implicit constant depends only on $s, d$ and $k$.
\end{thm}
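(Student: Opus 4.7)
My plan would be to decouple the two kinds of lower bounds. For the trivial contribution $N^{sd}$, one simply counts diagonal pairings: for every permutation $\pi$ of $\{1, \dots, s\}$, the assignment $y_{i, \ell} = x_{i, \pi(\ell)}$ automatically satisfies every equation of \eqref{2305e1.5}, and there are at least $N^{sd}$ independent choices of the variables $x_{i, \ell}$ in $[1, N] \cap \Z$.

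For the $j$-th term in the sum I would use a two-step argument. The first step is a \emph{reduction} from $d$ variables to $j$ variables: fix constants $c_{j+1}, \dots, c_d \in [1, N] \cap \Z$ and impose $x_{i, \ell} = y_{i, \ell} = c_i$ for every $\ell$ and every $i > j$. An equation of \eqref{2305e1.5} indexed by $(i_1, \dots, i_d)$ with $1 \le i_1 + \dots + i_d \le k$ then either becomes trivial (when $i_1 = \dots = i_j = 0$ both sides already agree term by term) or the nonzero factor $c_{j+1}^{i_{j+1}} \cdots c_d^{i_d}$ can be cancelled from both sides, leaving the $j$-variable equation indexed by $(i_1, \dots, i_j)$. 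The restricted system is therefore precisely the $(s, j, k)$ Parsell--Vinogradov system in the variables $\{x_{i, \ell}, y_{i, \ell} : 1 \le i \le j\}$, and letting the $c_i$ vary yields
\beq
J_{s, d, k}(N) \;\ge\; N^{d - j} \cdot J_{s, j, k}(N).
\endeq

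The second step is a ``central major arc'' lower bound on $J_{s, j, k}(N)$. Writing
\beq
J_{s, j, k}(N) \;=\; \int_{[0,1]^{M_j}} |f_{N, j}(\alpha)|^{2s} \, d\alpha, \quad M_j = \binom{k+j}{j} - 1,
\endeq
where $f_{N, j}(\alpha) = \sum_{x \in [1, N]^j \cap \Z^j} e^{2\pi i \, \alpha \cdot v(x)}$ and $v(x)$ is the vector of the nontrivial monomials $x_1^{i_1} \cdots x_j^{i_j}$ of total degree between $1$ and $k$, I would restrict to the box $B_j = \{\alpha : |\alpha_\mu| \le c_0 N^{-\deg \mu}\}$ for a sufficiently small absolute constant $c_0$. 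On $B_j$ the phase is uniformly of size $O(c_0)$ over $x \in [1, N]^j$, whence $\mathrm{Re}\, f_{N, j}(\alpha) \gtrsim N^j$; the measure of $B_j$ is comparable to $\prod_\mu N^{-\deg \mu} = N^{-\mc{K}_{j, k}}$, using the combinatorial identity
\beq
\sum_{m = 1}^k m \binom{m + j - 1}{j - 1} \;=\; \frac{jk}{j+1} \binom{k+j}{j} \;=\; \mc{K}_{j, k},
\endeq
which is straightforward to verify. Integrating yields $J_{s, j, k}(N) \gtrsim N^{2sj - \mc{K}_{j, k}}$, and combining with the reduction gives
\beq
J_{s, d, k}(N) \;\gtrsim\; N^{d - j} \cdot N^{2sj - \mc{K}_{j, k}} \;=\; N^{(2s-1)j + d - \mc{K}_{j, k}},
\endeq
as required.

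The only computation requiring any care is the combinatorial identity for $\mc{K}_{j, k}$; apart from that, every step is either a direct counting argument or a routine phase-localization estimate on the central major arc, so I do not foresee a genuine obstacle in executing this plan.
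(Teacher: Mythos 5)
Your argument is correct. The paper itself gives no proof of this bound (it is quoted from \cite{Par}), and your route --- the diagonal count for $N^{sd}$, the reduction $J_{s,d,k}(N)\ge N^{d-j}J_{s,j,k}(N)$ obtained by freezing the last $d-j$ coordinates at common values $c_i$, and the central major-arc estimate $J_{s,j,k}(N)\gtrsim N^{2sj-\mc{K}_{j,k}}$ via the box of measure $\sim N^{-\sum_\mu \deg\mu}$ together with the identity $\sum_{m=1}^k m\binom{m+j-1}{j-1}=j\binom{k+j}{j+1}=\mc{K}_{j,k}$ --- is the standard argument and all steps check out. The only cosmetic point is that $c_0$ cannot be an absolute constant: it must be small depending on the number of monomials, i.e.\ on $j$ and $k$, which is harmless since the implicit constant is allowed to depend on $s,d,k$.
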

The right hand side of \eqref{2305e1.12} has $d+1$ terms, which indicates that there might be  about $d$ many regimes when estimating $J_{s, d, k}(N)$. When $d=1$ and $k\ge 2$, \eqref{2305e1.12} becomes
$$
J_{s, 1, k}(N)\gtrsim N^s+N^{2s-\mc{K}_{1, k}}=N^s+N^{2s-\frac{k(k+1)}{2}}.
$$
From this, we see that we have the following two regimes
$$
2s\ge k(k+1) \text{ and } 2s\le k(k+1).
$$
When $d=2$ and $k\ge 2$, we have
$$
J_{s, 2, k}(N)\gtrsim N^{2s}+N^{2s+1-\frac{k(k+1)}{2}}+N^{4s-\frac{k(k+2)(k+1)}{3}}\sim N^{2s} +N^{4s-\frac{k(k+2)(k+1)}{3}}.
$$
We see that there are still only two regimes in this case
$$
2s\ge \frac{k(k+1)(k+2)}{3} \text{ and } 2s\le \frac{k(k+1)(k+2)}{3}.
$$
We analyze one more case. When $d=3$, the right hand side of \eqref{2305e1.12} becomes
\beq\label{2405e1.11}
J_{s, 3, k}(N)\gtrsim N^{3s}+N^{2s+2-\frac{k(k+1)}{2}}+N^{4s+1-\frac{k(k+1)(k+2)}{3}}+N^{6s-\frac{k(k+1)(k+2)(k+3)}{8}}.
\endeq
It turns out that when $2\le k\le 4$, the last expression is equivalent to
$$
J_{s, 3, k}(N)\gtrsim N^{3s}+N^{6s-\frac{k(k+1)(k+2)(k+3)}{8}}.
$$
When $k\ge 5$, a new regime appears
$$
J_{s, 3, k}(N)\gtrsim N^{3s}+N^{4s+1-\frac{k(k+1)(k+2)}{3}}+N^{6s-\frac{k(k+1)(k+2)(k+3)}{8}}.
$$
That means that we have three different behaviours of $J_{s, 3, k}$, depending on which of the following intervals $2s$ belongs to
$$
[2, \frac{2k(k+1)(k+2)}{3}-2],\;\;\;\; (\frac{2k(k+1)(k+2)}{3}-2,\frac{k(k+1)(k+2)(3k+1)}{24}-1]$$or $$(\frac{k(k+1)(k+2)(3k+1)}{24}-1, \infty).
$$
This discussion already gives an indication about the growth in complexity of the quantities $J_{s, d, k}(N)$ as $d,k$ grow large.
\bigskip

Closely related to the number of solutions \eqref{2305f1.5} of the system of equations \eqref{2305e1.5} are several sharp decoupling inequalities. For $d\ge 1$ and $k\ge 2$, let $\mc{S}_{d, k}$ be the $d$ dimensional surface in $\R^{n}$ with
\beq\label{2405e1.15}
n=\binom{d+k}{k}-1,
\endeq
defined by
$$
\mc{S}_{d, k}=\{\Phi(t_1, t_2, ..., t_d): (t_1, t_2, ..., t_d)\in [0, 1]^d\}.
$$
 Here the entries of $\Phi=\Phi_{d,k}:\R^d\to \R^n$ consist of all the monomials $t_{1}^{i_1} t_{2}^{i_2}... t_{d}^{i_d}$ with $1\le i_1+i_2+...+i_d\le k$.  For instance, when $d=1$, we have $n=k$ and
$$
\mc{S}_{1, k}=\{(t, t^2, ..., t^k): t\in [0, 1]\}.
$$
When $d=2$ and $k=2$, we have $n=5$ and
$$
\mc{S}_{2, 2}=\{(t_1, t_2, t_1^2, t_1 t_2, t_2^2): (t_1, t_2)\in [0, 1]^2\}.
$$

\medskip

The lower bounds in \eqref{2305e1.12} were  also conjectured in \cite{Par} to be upper bounds, up to a factor $N^{\epsilon}$, for  arbitrarily small $\epsilon>0$. There has been significant progress towards this conjecture in recent years. Two types of methods have been employed. The first one is number theoretical and revolves around the efficient congruencing method of Wooley. The reader may consult \cite{Woo}, \cite{Par} and other subsequent papers of Wooley and his collaborators for the results using these methods.
\medskip

More recently, the first and second authors have developed in \cite{BD1} an alternative approach called {\em decouplings}, that employs solely Fourier analytic techniques. We next recall the relevant machinery.
\medskip

For a measurable subset $R\subset [0, 1]^d$ and a measurable function $g:R\to\C$,  define the so-called {\em extension} operator associated to $\mc{S}_{d, k}$ (restricted to $R$) by
$$
E^{(d, k)}_R g(x)=\int_{R} g(t)e(t_1 x_1+...+t_d x_d+ t_1^2 x_{d+1}+...+t_d^k x_n)dt.
$$
Here and in the following we write $e(z)=e^{2\pi iz}$, $x=(x_1,\ldots,x_n)$ and $dt=dt_1\ldots dt_d$. Also, for a ball $B=B(c,R)$ in $\R^n$ we will use the weight
$$w_B(x)=(1+\frac{\|x-c\|}{R})^{-C}$$
where $C$ is a large enough constant whose value will not be specified.
For each $p\ge q\ge 2$ and $0<\delta<1$ we denote by $V^{(d, k)}_{p,q}(\delta)$ the smallest constant such that
\beq\label{2305e1.20}
\|E^{(d, k)}_{[0, 1]^d}g\|_{L^p(w_{B})}\le V^{(d, k)}_{p,q}(\delta) (\sum_{\substack{\Delta: \text{ cube in } [0, 1]^d\\ l(\Delta)=\delta}}\|E^{(d, k)}_{\Delta}g\|_{L^p(w_{B})}^q)^{1/q}
\endeq
holds for each ball $B\subset \R^{n}$ with radius $\delta^{-k}$ and each measurable $g:[0,1]^d\to \C$. The summation on the right is understood to be over a finitely overlapping cover of $[0,1]^d$ with cubes $\Delta$ whose side length is $l(\Delta)=\delta$.
Such an inequality will be referred to throughout the paper as an $l^qL^p$ decoupling.
\medskip

Let us state several results that are most relevant to us.
\begin{thm}(Bourgain, Guth and Demeter \cite{BDG})\label{2305thm1.2}
Take $d=1$. For each $k\ge 2$, for each $\epsilon>0$, we have
$$
V^{(1, k)}_{k(k+1),2}(\delta)\lesim_{k, \epsilon} \delta^{\epsilon}.
$$
Moreover, this implies the following sharp upper bound
$$
J_{s, 1, k}(N) \lesim_{s, k, \epsilon} N^{s+\epsilon}+N^{2s-\frac{k(k+1)}{2}+\epsilon}.
$$
\end{thm}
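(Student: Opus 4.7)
The plan is to prove the result in two stages: first establish the $l^2 L^{k(k+1)}$ decoupling inequality $V^{(1,k)}_{k(k+1),2}(\delta) \lesssim_\epsilon \delta^{-\epsilon}$ for the moment curve, and then derive the Vinogradov bound on $J_{s,1,k}(N)$ by discretizing the extension operator.

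For the decoupling, I would follow the Bourgain--Demeter--Guth multiscale iteration strategy combining multilinear Kakeya with parabolic rescaling. The moment curve $(t,t^2,\ldots,t^k)$ has the crucial self-similarity that the affine rescaling $t\mapsto a+\rho t$ maps a subarc of length $\rho$ back to the full curve on $[0,1]$; the induced (anisotropic) dual scaling on $\R^k$ reduces decoupling on such a subarc at scale $\delta$ to decoupling on $[0,1]$ at the finer scale $\delta/\rho$. Consequently, any contribution coming from arcs clustered inside a short subinterval can be absorbed into a smaller-scale decoupling, and the main work is to control the contribution of spatially spread-out (transverse) arcs. For this, the Bennett--Carbery--Tao multilinear Kakeya / Brascamp--Lieb inequality provides a sharp $k$-linear bound for the extensions $E_{I_j}g$ whenever the arcs $I_j$ are $\nu$-transverse, i.e.\ whenever the tangent vectors $\Phi'(t_1),\ldots,\Phi'(t_k)$ obey a uniform lower bound on the absolute value of their Vandermonde determinant. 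The Bourgain--Guth broad/narrow dichotomy then converts this $k$-linear estimate into the linear $l^2 L^{k(k+1)}$ decoupling, using as auxiliary input the $l^2 L^{j(j+1)}$ decouplings for moment curves in $\R^j$ with $j<k$ (available by induction on $k$), while simultaneously inducting on the scale $\delta$.

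For the Vinogradov bound, set $\delta=1/N$ and let $g=\sum_{n=1}^N\phi_n$ where $\phi_n$ is a smooth bump of $L^\infty$-norm $1$ supported on the arc $\Delta_n=[(n-\tfrac12)/N,(n+\tfrac12)/N]$. On a ball $B\subset\R^k$ of radius $N^k$, each $|E_{\Delta_n}g|$ is essentially constant of size $\sim 1/N$ on a rectangular box of dimensions $N\times N^2\times\cdots\times N^k$ contained in $B$, so a direct computation gives $\|E_{\Delta_n}g\|_{L^{k(k+1)}(w_B)}^2\sim N^{-1}$ and hence $\sum_n\|E_{\Delta_n}g\|_{L^{k(k+1)}(w_B)}^2=O(1)$. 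At the same time, via the change of variables $y_j=N^j x_j$ and orthogonality,
$$\|E_{[0,1]}g\|_{L^{k(k+1)}(w_B)}^{k(k+1)}\gtrsim N^{-k(k+1)/2}\,J_{k(k+1)/2,1,k}(N).$$
Inserting these into the decoupling inequality yields $J_{k(k+1)/2,1,k}(N)\lesssim_\epsilon N^{k(k+1)/2+\epsilon}$, the claimed bound at the critical exponent $s=k(k+1)/2$. Since $[0,1]^k$ has unit measure, H\"older's inequality gives $J_{s,1,k}(N)\le J_{k(k+1)/2,1,k}(N)^{2s/(k(k+1))}$, extending the bound $N^{s+\epsilon}$ to all $s\le k(k+1)/2$; and the trivial inequality $J_{s+1,1,k}(N)\le N^2 J_{s,1,k}(N)$ (using $|f|\le N$ pointwise for the exponential sum $f$) extends to all $s\ge k(k+1)/2$, producing the second term $N^{2s-k(k+1)/2+\epsilon}$.

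The principal difficulty lies in the decoupling itself. The broad/narrow decomposition, the multiscale induction, and the multilinear Kakeya input interact in a subtle way: a naive iteration produces constants of the form $\delta^{-c(k)}$ with $c(k)$ a fixed positive number, rather than the desired $\delta^{-\epsilon}$ for arbitrarily small $\epsilon$. Extracting the correct small loss requires the careful reorganization of the iteration developed in \cite{BDG}, which treats the induction hypothesis essentially as self-improving under the iterative step, interpolating between linear and multilinear decouplings at multiple intermediate scales.
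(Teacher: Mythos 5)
The theorem you are asked about is not proved in this paper at all: it is imported verbatim from \cite{BDG}, so there is no in-paper argument to compare with. Judged on its own terms, your proposal splits cleanly into two parts of very different status. The transference part is correct and standard: taking $g$ to be unit bumps on the arcs of length $1/N$, the anisotropic change of variables $y_j=N^jx_j$ gives $\|E_{[0,1]}g\|_{L^{k(k+1)}(w_B)}^{k(k+1)}\gtrsim N^{-k(k+1)/2}J_{k(k+1)/2,1,k}(N)$, each $\|E_{\Delta_n}g\|_{L^{k(k+1)}(w_B)}^2\sim N^{-1}$ so the $\ell^2$ sum is $O(1)$, and the two regimes in $s$ follow from H\"older on $[0,1]^k$ below the critical exponent and from the trivial bound $|f|\le N$ above it. This is exactly how such counting bounds are deduced in \cite{BD2}, \cite{BDG}.

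The decoupling half, however, is a sketch rather than a proof, and the sketch as written would not close. You describe the multilinear input as a $k$-linear estimate governed by a Vandermonde lower bound on the first-order tangent vectors, to be combined with the lower-degree $l^2L^{j(j+1)}$ decouplings and a Bourgain--Guth broad/narrow dichotomy. That architecture is too coarse: for the moment curve the argument of \cite{BDG} is $M$-linear with $M$ large, and its engine is not a single multilinear restriction estimate but a hierarchy of ``ball inflation'' inequalities, i.e.\ multilinear Kakeya for plates spanned by the osculating flag (derivatives up to order $j$, for every $1\le j\le k-1$) used at a family of intermediate Lebesgue exponents, interleaved with $L^2$ orthogonality and affine rescaling; the $\delta^{-\epsilon}$ loss only emerges from the asymptotic bookkeeping of the resulting two-parameter iteration, which is precisely the step you explicitly defer to \cite{BDG}. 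So the decisive part of the theorem is cited, not proved; since the paper under review does exactly the same, this is acceptable as context, but it should be stated as a citation rather than presented as a proof outline that could be completed along the lines you indicate.
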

This completely settled the problem in dimension $d=1$. As a preparation for stating our results in dimension $d=2$, we first state a conjecture of purely linear algebra nature. We formulate it in arbitrary dimensions.

For each $t\in[0,1]^d$ and $1\le l\le k-1$, we let  (recall $\Phi=\Phi_{d,k}$ was introduced earlier)
 $\mc{M}^{(l)}(t)$ denote the $n\times \big(\binom{d+l}{l}-1 \big)$ matrix whose columns are the vectors $\Phi^{(\alpha)}(t)$, with $\alpha$ running through all the multi-indices with $1\le |\alpha|\le l$.

Take a linear space $V=\langle v_1, v_2, ..., v_{dim(V)}\rangle\subset \R^{n}$ with $n$ given by \eqref{2405e1.15}. For convenience, we let all $v_i$ be column vectors. Define
$$
\mc{M}^{(l)}_V(t)=(v_1, v_2, ..., v_{dim(V)})^T \times \mc{M}^{(l)}(t).
$$
Here ``$\times $'' refers to the product of two matrices. Hence for each $t\in [0, 1]^d$, $\mc{M}_V^{(l)}(t)$ is a $dim(V)\times \big(\binom{d+l}{l}-1 \big)$ matrix. We consider the following conjecture.

\begin{conj}\label{conjecture0}
For each $d\ge 2$ and $k\ge 2$, each $1\le l\le k-1$ and each linear subspace $V\subset \R^{n}$ with dimension $dim(V)$, the matrix $\mc{M}_V^{(l)}(t)$ has at least one minor  of order
$$
\Big[\frac{dim(V)\cdot \big(\binom{d+l}{l}-1\big)}{\binom{d+k}{k}-1}\Big]+1,
$$
whose determinant, viewed as a function of $t\in [0, 1]^d$, does not vanish identically.
\end{conj}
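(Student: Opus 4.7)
The plan is to translate the statement into a question about polynomials and reduce it to a transcendence-degree/Hilbert-function count. Use the identification $e_\beta\leftrightarrow t^\beta$ to regard $\R^n$ as the space $\mc{P}$ of polynomials $P(t_1,\ldots,t_d)$ of degree at most $k$ without constant term. Each $v\in V$ corresponds to $P_v(t)=v\cdot\Phi(t)\in\mc{P}$, so $V$ becomes an $m$-dimensional subspace of $\mc{P}$ (with $m=dim(V)$), and a direct check shows that the $(i,\alpha)$-entry of $\mc{M}_V^{(l)}(t)$ equals $P_{v_i}^{(\alpha)}(t)$. Writing $b=\binom{d+l}{l}-1$, the existence of an $R\times R$ minor that does not vanish identically is equivalent to the generic rank over $\R(t)$ of the $m\times b$ polynomial matrix $[P_{v_i}^{(\alpha)}(t)]$ being at least $R:=[\tfrac{mb}{n}]+1$.

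I would argue by contrapositive. Suppose the generic rank is at most $R-1$, so every $R\times R$ minor vanishes identically; the goal is to derive $m\le(R-1)\tfrac{n}{b}$. In the case $l=1$ (so $b=d$) this is precisely the classical Jacobian criterion: identical vanishing of all $R\times R$ minors of the full Jacobian of $(P_{v_1},\ldots,P_{v_m})$ forces $\operatorname{trdeg}_\R\R(P_{v_1},\ldots,P_{v_m})\le R-1$, so all $P_{v_i}$ lie in an algebraic extension of a purely transcendental subfield $\R(f_1,\ldots,f_{R-1})$ for suitably chosen polynomials $f_j$. A Hilbert-function count then bounds $\dim V\le\binom{R-1+k}{k}-1$, and the elementary inequality
\[
d\bigl(\textstyle\binom{R-1+k}{k}-1\bigr)\le(R-1)\bigl(\textstyle\binom{d+k}{k}-1\bigr)=(R-1)n\qquad(1\le R-1\le d),
\]
which is verified by comparing ratios of consecutive binomials (equality at $R-1=d$), yields the required bound $m\le(R-1)n/d=(R-1)n/b$.

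The main obstacle, for general $l\ge 2$, is to prove a corresponding higher-order Jacobian criterion with the sharp fraction $b/n$: the identical vanishing of all $R\times R$ minors of the $l$-jet matrix $[P_{v_i}^{(\alpha)}(t)]$ should force a suitable algebraic degeneration of the $l$-jets of the $P_{v_i}$, with precisely the counting exponent $b/n$. A natural strategy is a double induction on $l$ (reducing order $l$ to order $l-1$ by differentiating once) and on $d$ (via generic hyperplane sections of $V$), verifying at each stage that the ratio $\binom{d+l}{l}/\binom{d+k}{k}$ is preserved. An alternative, representation-theoretic, route is to decompose $\mc{P}$ under the $GL_d$-action into symmetric powers and exploit equivariance of the differentiation maps; the ratio $b/n$ then admits a Hilbert-polynomial interpretation, suggesting a Pl\"ucker/Schubert-calculus formulation on the Grassmannian $\mathrm{Gr}(m,n)$. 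Since the sharp constant appears delicate and the required higher-jet criterion is not standard, it seems prudent to first verify the conjecture in the low-dimensional regime $d=2$ by direct Hilbert-function computation before attempting the general case.
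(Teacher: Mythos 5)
There is a genuine gap — in fact the text you are addressing is stated in the paper as a \emph{conjecture}, and the authors explicitly regard the general case as open ("proving this seems a real challenge"); the paper itself only establishes the special cases it needs, namely $l=1$, $d=2$ (quoted from \cite{BD2}) and $(d,k,l)=(2,3,2)$, the latter by a concrete argument in the Appendix: one identifies $V$ with a subspace of $S_1\oplus S_2\oplus S_3$, studies the second-order Taylor projection $\pi_{1,2}P_{\xi}$, proves three lemmas on the generic dimension of $\pi_{1,2}P_{\xi}(S_3)$ and of its sums with fixed subspaces of $S_1\oplus S_2$, and then runs a case analysis over $\dim V\in\{2,4,6,8\}$. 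Your proposal does not prove the statement either: the central step for $l\ge 2$ — a higher-order Jacobian criterion asserting that identical vanishing of all $R\times R$ minors of the $l$-jet matrix forces $\dim V\le (R-1)\,n/b$ with the sharp ratio $b/n$ — is exactly the content of the conjecture, and you leave it as "the main obstacle", offering only candidate strategies (double induction, $GL_d$-equivariance) without carrying any of them out. A proposal whose key lemma is the statement to be proved is a research program, not a proof.

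Moreover, even the $l=1$ chain you describe is not complete as written. From generic Jacobian rank $\le R-1$ one does get $\operatorname{trdeg}_\R\R(P_{v_1},\ldots,P_{v_m})\le R-1$ (characteristic zero), but the next step — "a Hilbert-function count then bounds $\dim V\le\binom{R-1+k}{k}-1$" — is asserted, not proved, and it is delicate: a subfield of $\R(t_1,\ldots,t_d)$ of transcendence degree $\rho\ge 2$ need not be purely transcendental (L\"uroth fails), the generators $f_1,\ldots,f_{R-1}$ need not be choosable as polynomials, and mere algebraicity of the $P_{v_i}$ over $\R(f_1,\ldots,f_{R-1})$ gives no graded or filtered structure on which a Hilbert-function estimate can be run; a crude degree bound on the image variety yields only something of the order $k^{\rho}$, which is weaker by a factor $\rho!$ than the bound you need. (Your binomial comparison $d\bigl(\binom{R-1+k}{k}-1\bigr)\le (R-1)\bigl(\binom{d+k}{k}-1\bigr)$ is correct, and its strictness for $R-1<d$ is indeed what would rescue the boundary case where $mb/n$ is an integer — but that only matters once the Hilbert-function lemma is actually established.) Finally, note that your fallback suggestion, to "first verify the conjecture in the low-dimensional regime $d=2$", is precisely what the paper does for $(2,3,2)$, but by explicit jet-space linear algebra rather than by the Hilbert-function computation you leave unexecuted; if your jet criterion could be proved with the sharp constant it would be far more general than the paper's Appendix, but as it stands no case of the conjecture is established by your argument.
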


Now we are ready to recall a result in dimension $d=2$ due to the first and second authors.
\begin{thm}(d=2, \;\cite{BD2})\label{2405thm1.4}
Assume Conjecture \ref{conjecture0} holds for $l=1$, $d=2$ and some $k\ge 2$. Then for each $\epsilon>0$, we have the sharp upper bounds
$$
V^{(2, k)}_{p,p}(\delta)\lesim_{p, \epsilon} \delta^{2(\frac{1}{2}-\frac{1}{p})+\epsilon}, \text{ for all } p\le k(k+3)-2.
$$
Moreover, Conjecture \ref{conjecture0} is verified\footnote{This conjecture was formulated slightly differently in \cite{BD2}, but the two formulations are equivalent} for $k=2$ and also for $k=3$. This further implies the sharp upper bounds
$$
J_{s, 2, 2}(N)\lesim N^{2s+\epsilon} + N^{4s-8} \text{ for all } s\ge 1,
$$
and
$$
J_{s, 2, 3}(N)\lesim N^{2s+\epsilon} + N^{4s-20} \text{ for all } 1\le s\le 8.
$$
\end{thm}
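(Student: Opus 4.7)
The plan is to handle Theorem \ref{2405thm1.4} in three stages: the conditional decoupling, the verification of Conjecture \ref{conjecture0} in the low-degree cases $k=2,3$, and the deduction of the Diophantine counting bounds.

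For the decoupling $V^{(2,k)}_{p,p}(\delta)\lesim_{p,\epsilon}\delta^{2(1/2-1/p)+\epsilon}$ up to the critical exponent $p_c=k(k+3)-2$, I would follow the Bourgain--Demeter--Guth iteration scheme: induction on the scale $\delta$ combined with a broad/narrow dichotomy for the frequency cubes $\Delta\subset [0,1]^2$. The broad term is handled by a multilinear Kakeya / multilinear restriction estimate applied to tuples of pieces whose tangent data to $\mc{S}_{2,k}$ produce a quantitatively non-degenerate determinant, yielding the sharp $\delta$-power automatically. The narrow term, where no such non-degenerate tuple exists, is the conceptual core: by pigeonholing, the pieces must concentrate in a regime where the tangent data takes values in some proper linear subspace $V\subset \R^n$. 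Conjecture \ref{conjecture0} with $l=1$ enters precisely here; its conclusion that $\mc{M}^{(1)}_V(t)$ contains a non-vanishing minor of order $[2\dim(V)/n]+1$ translates, via a quantitative (resultant-type) lower bound, into the confinement of the narrow cubes to a neighbourhood of a lower-dimensional algebraic subset of $[0,1]^2$. One then handles this confined contribution by parabolic rescaling together with a decoupling for the moment curve (Theorem \ref{2305thm1.2}), and closes the induction by absorbing the $\delta^{\epsilon}$ losses in Bourgain's standard bookkeeping.

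To verify Conjecture \ref{conjecture0} with $l=1$ in the cases $k=2$ ($n=5$) and $k=3$ ($n=9$), the plan is an explicit linear-algebra inspection. For each $m=\dim(V)\in\{1,\ldots,n-1\}$ one writes $\mc{M}^{(1)}_V(t)$ as an $m\times 2$ matrix whose entries are polynomials of degree $\le k-1$ in $(t_1,t_2)$, and one exhibits a minor of the required order $[2m/n]+1$ whose determinant is a nonzero polynomial. For $k=2$ the orders demanded are at most $2$, and the verification reduces to a brief case-by-case check that no nontrivial $V$ simultaneously annihilates the partial derivatives of $\Phi_{2,2}$ across all candidate minors. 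For $k=3$ the argument is more involved: one leverages the linear independence of the monomials of degree $\le 2$ as functions of $(t_1,t_2)$, and argues that no proper subspace $V$ can force the vanishing of all minors of the stipulated order simultaneously. I expect this $k=3$ verification to be the main technical obstacle; indeed it is precisely because the analogous checks for $k\ge 4$ are not known that the final theorem is restricted to $k\le 3$.

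Finally, the passage from the decoupling to the bounds on $J_{s,2,k}(N)$ for $k\in\{2,3\}$ is the standard orthogonality argument. The plan is: take $g$ to be a bump-smoothed indicator of the lattice $\{1,\ldots,N\}^2/N\subset [0,1]^2$; use Plancherel on a ball $B\subset\R^n$ of radius $N^k$ to identify $\|E_{[0,1]^2}^{(2,k)}g\|_{L^{2s}(w_B)}^{2s}$ with $J_{s,2,k}(N)$ up to admissible factors; estimate each $\|E^{(2,k)}_\Delta g\|_{L^p(w_B)}$ trivially since a cube $\Delta$ at scale $1/N$ contains at most one lattice point; and apply the decoupling at the critical $p=k(k+3)-2$. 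The diagonal contribution yields $N^{2s+\epsilon}$ while the off-diagonal contribution yields $N^{4s-k(k+1)(k+2)/3}$, which produces the claimed bounds in the stated ranges of $s$.
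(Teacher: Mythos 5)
The paper does not actually prove Theorem \ref{2405thm1.4}; it is quoted from \cite{BD2}. Measured against that argument, your outline misplaces the role of Conjecture \ref{conjecture0}, and this is a genuine gap rather than a cosmetic one. In the Bourgain--Guth scheme the confinement of the narrow caps to a neighbourhood of the zero set of a polynomial of degree at most $k$ is true by construction: that is precisely how the broad/narrow dichotomy is set up (compare Definition \ref{dfek1} and Case 3 of Proposition \ref{2903p3.2}), and it requires no non-degeneracy input. What Conjecture \ref{conjecture0} with $l=1$ is actually needed for is the \emph{broad} term: it is the ingredient that converts polynomial transversality of the caps into the Brascamp--Lieb condition \eqref{1803e1.9} for \emph{every} linear subspace $V\subset\R^n$, which is what legitimises the multilinear restriction/Kakeya machinery of Theorem \ref{abc14}; this is exactly the mechanism recalled in Proposition \ref{pfek1} and, for $l=2$, carried out in Lemma \ref{0711lemma1.3}. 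In your proposal the multilinear estimate on the broad term is taken for granted (``yielding the sharp $\delta$-power automatically''), so the step the conjecture is supposed to justify is left unsupported, while the step you assign to the conjecture needs no conjecture at all. Moreover, the broad term does not give the sharp exponent in one stroke: one still needs the multiscale inequality (Corollary 6.7 of \cite{BD2}, recalled here as the $k=2$ analogue in Section \ref{section4}) and an induction-on-scales iteration.

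A secondary discrepancy: for the narrow term in the $l^pL^p$ setting of Theorem \ref{2405thm1.4}, \cite{BD2} uses only a trivial decoupling, which suffices there; your proposed use of the degree-$k$ moment curve decoupling (Theorem \ref{2305thm1.2}) is not justified as stated, since the relevant set is the image under $\Phi_{2,k}$ of the zero set of a two-variable polynomial, not the moment curve, and one would at least need cylindrical/dimension-reduction lemmas of the type in Section \ref{abcsec2} to invoke any one-dimensional result (this refinement is what the present paper does for its new $l^qL^8$ Theorem \ref{0626theorem1.3}, not for Theorem \ref{2405thm1.4}). Your stages (2) and (3) --- the explicit linear-algebra verification of the conjecture for $k=2,3$, $l=1$, and the standard passage from the critical decoupling to the bounds on $J_{s,2,k}(N)$ with the ranges $s\ge1$ ($k=2$) and $1\le s\le 8$ ($k=3$) dictated by $p\le k(k+3)-2$ --- are consistent with \cite{BD2} at the level of a sketch.
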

In light of the previous discussion, this completely settled Parsell--Prendiville--Wooley conjecture in the case $d=k=2$.

We note that while the above estimate for $J_{s, 2, 3}(N)$ is sharp in the range $1\le s\le 8$, it falls short of recovering the full expected range $s\ge 1$, due to the gap between $8$ and the critical index $s=10$. The main new result proved here  bridges this gap, thus solving the Parsell--Prendiville--Wooley conjecture in the cubic case $d=2, k=3$.

\begin{thm}\label{2405thm1.5}($d=2,k=3$)
Conjecture \ref{conjecture0} holds for $d=2,k=3$ and $1\le l\le 2$. We also have the sharp bound
\beq
\label{abc13}
V^{(2,3)}_{20,20}(\delta)\lesim_{\epsilon, p} \Big(\frac{1}{\delta}\Big)^{2(\frac12-\frac1{20})+\epsilon}.
\endeq
This further implies the sharp upper bound, in the full expected range
\beq
\label{abc52}
J_{s, 2, 3}(N)\lesim_\epsilon N^{2s+\epsilon} + N^{4s-20+\epsilon} \text{ for all } s\ge 1.
\endeq
\end{thm}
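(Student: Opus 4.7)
\textbf{Proof proposal for Theorem \ref{2405thm1.5}.}

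The plan is to handle the three claims in turn. Only the case $l=2$ of Conjecture \ref{conjecture0} is new, since $l=1$ is already part of Theorem \ref{2405thm1.4}. The sharp decoupling \eqref{abc13} is the technical core, and the counting bound \eqref{abc52} then follows from \eqref{abc13} by the standard Fourier-analytic discretization combined with interpolation.

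For the linear-algebra step at $l=2$, the object of interest is the $9\times 5$ matrix $\mc{M}^{(2)}(t)$ whose columns are the partial derivatives $\partial^{\alpha}\Phi_{2,3}(t)$ with $1\le |\alpha|\le 2$. For a subspace $V\subset\R^9$ of dimension $m$, Conjecture \ref{conjecture0} asks for a not-identically-vanishing $([5m/9]+1)$-minor of $\mc{M}^{(2)}_V(t)$. I would stratify by $m$: the extreme cases $m\in\{1,9\}$ are handled by direct inspection of a single entry and of the full $5\times 5$ minor of $\mc{M}^{(2)}(t)$ respectively, while the intermediate cases $2\le m\le 8$ are attacked by contradiction. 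Supposing all minors of the required order vanish identically in $t$ forces a rigid system of polynomial relations on a basis of $V$, which can be ruled out using the explicit form of the ten monomials making up $\Phi_{2,3}$. The principal bookkeeping obstacle is choosing a basis of $V$ adapted to the degree grading of $\Phi_{2,3}$ so that the resulting relations decouple cleanly across degrees.

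For \eqref{abc13} I would run the multilinear-to-linear induction-on-scales scheme of \cite{BDG,BD2}, with Conjecture \ref{conjecture0} supplying the transversality needed to feed the multilinear Kakeya / Bennett--Carbery--Tao input. Since the $l=1$ case alone yields sharp decoupling only up to $p=k(k+3)-2=16$, the new ingredient must be second-order geometry. Concretely, I plan to decouple in two stages: first split $[0,1]^2$ into intermediate-scale $\rho$-cubes using first-order transversality, and then on each $\rho$-cube extract second-order information by Taylor-expanding $\Phi_{2,3}$ and performing an auxiliary decoupling on the approximating lower-degree surface that appears, with the $l=2$ case of Conjecture \ref{conjecture0} furnishing the transversality for this second step. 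Reinserted into the Bourgain--Guth scheme and combined with the parabolic rescaling symmetry enjoyed by $\mc{S}_{2,3}$, this should close the induction at the critical exponent $p=20$. The main technical obstacle is the calibration of scales: $\rho$ must be tuned as a function of $\delta$ so that the rescaling symmetry aligns with the second-step auxiliary decoupling, and one must verify that the exponents add up exactly to the sharp value $2(\frac{1}{2}-\frac{1}{20})=\frac{9}{10}$.

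Finally, \eqref{abc52} at the critical index $s=10$ follows from \eqref{abc13} by choosing $g$ to be a sum of bump functions concentrated on the $N^{-1}$-neighborhoods of the grid $\{\xi/N:\xi\in\{1,\ldots,N\}^2\}$. This converts the left-hand side of \eqref{abc13}, evaluated over a ball of radius $N^3$, into a fixed power of $N$ times $J_{10,2,3}(N)$, while the right-hand side reduces to a matching power of $N$ times $N^{\epsilon}$, yielding $J_{10,2,3}(N)\lesim N^{20+\epsilon}$. The range $s>10$ is then handled by the pointwise bound $|E^{(2,3)}_{[0,1]^2}g|\lesim N^2$, giving $J_{s,2,3}(N)\lesim N^{4(s-10)}J_{10,2,3}(N)$; the range $1\le s<10$ follows from H\"older interpolation between the critical estimate at $p=20$ and the trivial identity $J_{1,2,3}(N)=N^2$. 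Together with the diagonal lower bound $J_{s,2,3}(N)\ge N^{2s}$, these yield \eqref{abc52} in the full expected range $s\ge 1$.
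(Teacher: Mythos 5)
Your outline of the deduction of \eqref{abc52} from \eqref{abc13} is the standard argument and matches the paper (which simply cites Section 2 of \cite{BD2}), but the core of the theorem is \eqref{abc13}, and there your plan has a genuine gap. The two-stage scheme you describe (first-order transversality at an intermediate scale, then an ``auxiliary decoupling on the approximating lower-degree surface'') runs into the following quantitative obstruction, which is precisely the main novelty of the paper. On caps of intermediate side length, $\mc{S}_{2,3}$ is indistinguishable from an affine copy of the quadratic surface $\mc{S}_{2,2}$, so the only available input at that stage is a decoupling for $\mc{S}_{2,2}$; however, the ball-inflation step (Lemma \ref{main1}, a multilinear Kakeya statement), which is what allows the spatial scale to grow, forces the bookkeeping to be done in $l^q$ with $q\le \frac{l(l+3)p}{2n}$, i.e.\ $q<\frac{40}{9}$ in the worst case $l=1$, $p$ near $20$. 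The known sharp $l^8L^8$ decoupling for $\mc{S}_{2,2}$ from \cite{BD2} is not strong enough in this regime, and the paper has to prove a new sharp $l^qL^8$ decoupling for $\mc{S}_{2,2}$ with $\frac83\le q<\frac{40}{9}$ (Theorem \ref{0626theorem1.3}), whose proof in turn requires estimating the lower-dimensional term in the Bourgain--Guth argument by the $l^2$ parabola decoupling together with the dimension-reduction lemmas of Section \ref{abcsec2}, rather than by a trivial decoupling. Your sketch never identifies this ingredient, and without it the exponents in the iteration do not sum to the sharp value $\frac{9}{10}$: the actual closing argument is not a single calibration of $\rho$ against $\delta$ but an iterated scheme (repeated $l=1$ and $l=2$ ball inflations with an $l^{8/3}$ quantity, followed by a multi-step bootstrap and a contradiction argument against the critical exponent, combined with Theorem \ref{abc37}). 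You also misplace where the $l=2$ case of Conjecture \ref{conjecture0} enters: it is not a transversality input for the auxiliary quadratic decoupling, but is what verifies the Brascamp--Lieb condition with $d_0=5$ (Lemma \ref{0711lemma1.3}) needed for the second ($l=2$) ball inflation.

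The linear-algebra part of your proposal is only a plan, not a proof. Saying that the vanishing of all minors of the required order ``forces a rigid system of polynomial relations\ldots which can be ruled out using the explicit form of the monomials'' is exactly the step that needs an argument; the paper does this by analyzing the projection $\pi_{1,2}P_\xi$ of second-order Taylor polynomials, proving structural lemmas about $\pi_{1,2}P_\xi(S_3)$ (including a generalized Wronskian argument), and running a case analysis over $\dim V\in\{2,4,6,8\}$, since these are the dimensions where $[5\dim V/9]+1$ jumps. (Also, $\Phi_{2,3}$ consists of nine monomials, not ten, as $n=\binom{5}{3}-1=9$.) So while your overall architecture points in the right direction, the proposal as written is missing the decisive new analytic ingredient for \eqref{abc13} and the actual content of the $l=2$ case of Conjecture \ref{conjecture0}.
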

The fact that \eqref{abc52} follows from \eqref{abc13} is rather standard, see for example section 2 in \cite{BD2}.
\medskip

The proof of Conjecture \ref{conjecture0} for $(d,k,l)=(2,3,2)$ is quite computationally involved, so we postpone it to the Appendix. Inequality \eqref{abc13} will follow by refining the decoupling approach developed over the last three years. Two papers, namely \cite{BD2} and \cite{BDG} will play a key inspirational role. The main novelties can be described as follows.

One of the cornerstones of our approach here (and in \cite{BDG}) is a certain ball inflation lemma, which is some sort of  multilinear Kakeya inequality in disguise. Here, this result is Lemma \ref{main1}. When $l=1$, this Lemma requires an $l^q$ summation with $q<\frac{40}{9}$ (see the remarks after the lemma). This in turn forces us to revisit the decoupling theory from \cite{BD2} for the quadratic surface $\mc{S}_{2,2}$. This is explained in Section \ref{abcsec4}. More precisely, the sharp $l^8L^8$ theory for $\mc{S}_{2,2}$ was established in \cite{BD2}, but for the reason described above we now need to understand the sharp $l^qL^{8}$ decoupling for $q<\frac{40}{9}$. One subtle aspect of this new endeavor is that this decoupling is only playing in our favor if we also have $q\ge \frac83$.
The key new element in establishing the $l^qL^8$ ($\frac83\le q<8$) decoupling for $\mc{S}_{2,2}$ versus the $l^{8}L^8$ decoupling from \cite{BD2} is in the way the lower dimensional term from the Bourgain--Guth scheme is estimated. While in \cite{BD2} this is estimated using a trivial decoupling, in the current situation this cheap approach is no longer sufficient. Instead, we need to invoke the $l^2$ decoupling for the parabola from \cite{BD1}, combined with certain dimension-reduction lemmas. These lemmas  of independent interest are proved in Section \ref{abcsec2}.

We decided to run the iteration argument in Section \ref{abcsec5} with $q=\frac83$, but, because of the reasons described above we could have used any $q\in[\frac83,\frac{40}9]$. We recall that in \cite{BDG} the value $q=2$ was used. The use of the new value $q=\frac83$ will force a whole host of subtle differences in Section \ref{abcsec5}, compared the argument in \cite{BDG}. But the fact that we eventually care about values of $p$ very close to $20>\frac83$ will always play in our favor.
\medskip

We believe that Conjecture \ref{conjecture0} should be true for all values of $k,d,l$ mentioned there, but at the moment proving this seems a real challenge. Our proof from the Appendix for the special case $(d,k,l)=(2,3,2)$ may give an indication on the level of complexity of the general case. Apart from Conjecture \ref{conjecture0} and the numerology which gets increasingly more complicated as $d,k$ get larger, we believe that there should not be any new serious obstacle for settling the full Parsell--Prendiville--Wooley conjecture. In particular, we believe that the analytic part of the machinery needed for the general case of $d,k$ is already present in this paper.
\medskip

Unlike \cite{BDG} where we opted for a high level of details, the presentation here is a bit less detail oriented when it comes about technicalities. The reader interested in all details is referred to both  \cite{BDG} and also to the study guide \cite{BD4}.

\section{Notation}
\bigskip

Throughout the paper we will write $A\lesssim_{\upsilon}B$ to denote the fact that $A\le CB$ for a certain implicit constant $C$ that depends on the parameter $\upsilon$. Typically, this parameter is either $\epsilon$ or $K$. The implicit constant will never depend on the scale $\delta$, on the balls we integrate over, or on the function $g$. It will however most of the times depend on  $n,k,d$ and on the Lebesgue index $p$. Since these can  be thought of as being fixed parameters, we will in general not write $\lesssim_{p,n,k,d}$.

We will denote by $B_R$ an arbitrary ball of radius $R$. We use the following two notations for averaged integrals
$$\nint_B F=\frac1{|B|}\int_BF,$$
$$\|F\|_{L^p_\sharp(w_B)}=(\frac1{|B|}\int|F|^pw_B)^{1/p}.$$
$|A|$ will refer to either the cardinality of $A$ if $A$ is finite, or to its Lebesgue measure if $A$ has positive measure.

\section{Dimensional reductions}
\label{abcsec2}
In this section we present a few auxiliary results that will be used a few times throughout the paper. They are also expected to play a role in future investigations.

For the rest of this section we will assume $p\ge q\ge 1$. Given a manifold $$\{(v,Q(v)):v\in\R^{d}\}$$
associated with $Q:\R^{d}\to \R^{d'}$, its extension operator will be defined as follows
$$E_Vg(x,x')=\int_Vg(v)e(xv+x'Q(v))dv.$$
Here $V$ is an arbitrary measurable set in $\R^d$, $g$ is an arbitrary complex valued measurable function on $\R^d$ and $(x,x')\in\R^{d}\times \R^{d'}.$

The first lemma shows how to reduce the dimension of the ambient space for the manifold.

\begin{lem}
\label{abc8}
Let $Q_i:\R^{d_0}\to\R^{d_i}$, $i=1,2$ be measurable. Fix  $U_1,\ldots,U_l$, an arbitrary measurable partition of $[0,1]^{d_0}$ and fix $B$,  an arbitrary measurable subset of $\R^{d_0+d_1}$. For $i=1,2$, let $E^{(i)}$ denote the extension operators associated with the manifolds $\mc{M}_i$ defined as follows
$$\mc{M}_1=\{(u,Q_1(u)):u\in\R^{d_0}\},$$
$$\mc{M}_2=\{(u,Q_1(u),Q_2(u)):u\in\R^{d_0}\}.$$
Let $C$ be a number such that the inequality
$$\|E^{(1)}_{[0,1]^{d_0}}g\|_{L^p(B)}\le C(\sum_i\|E_{U_i}^{(1)}g\|_{L^p(B)}^q)^{1/q}$$
holds for all measurable $g$.

Then for each measurable set $B'\subset \R^{d_2}$ and for each measurable $h$ we have
$$\|E^{(2)}_{[0,1]^{d_0}}h\|_{L^p(B\times B')}\le C(\sum_i\|E_{U_i}^{(2)}h\|_{L^p(B\times B')}^q)^{1/q}.$$
\end{lem}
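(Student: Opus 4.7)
The plan is to reduce everything to the hypothesis by freezing the extra variable $x'\in\R^{d_2}$ and then reassembling via Minkowski. First I would observe that the phase in $E^{(2)}$ separates in a useful way: for every $(x,y,x')\in\R^{d_0}\times\R^{d_1}\times\R^{d_2}$ and every measurable $V\subset\R^{d_0}$ one has
\[
E^{(2)}_V h(x,y,x')=\int_V h(u)e(x'Q_2(u))\,e(xu+yQ_1(u))\,du=E^{(1)}_V(h_{x'})(x,y),
\]
where $h_{x'}(u):=h(u)e(x'Q_2(u))$. Thus, with $x'$ held fixed, $E^{(2)}h(\cdot,\cdot,x')$ is literally an instance of the operator $E^{(1)}$ applied to a modulated copy of $h$.

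Next I would apply the hypothesis to each such $h_{x'}$. Since $h_{x'}$ is a perfectly admissible input for $E^{(1)}$, the assumed decoupling gives
\[
\|E^{(2)}_{[0,1]^{d_0}}h(\cdot,\cdot,x')\|_{L^p(B)}\le C\Big(\sum_i\|E^{(2)}_{U_i}h(\cdot,\cdot,x')\|_{L^p(B)}^q\Big)^{1/q}
\]
for every $x'$, where I have silently used the identity above both on the left and on the right.

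Finally I would raise this inequality to the $p$-th power, integrate in $x'$ over $B'$, and invoke Minkowski's inequality in the form $\|\sum_i a_i\|_{L^{p/q}(B')}\le \sum_i\|a_i\|_{L^{p/q}(B')}$, which is legal precisely because $p\ge q$, i.e.\ $p/q\ge 1$. Setting $a_i(x'):=\|E^{(2)}_{U_i}h(\cdot,\cdot,x')\|_{L^p(B)}^q$ yields
\[
\int_{B'}\Big(\sum_i a_i(x')\Big)^{p/q}dx'\le \Big(\sum_i\Big(\int_{B'}a_i(x')^{p/q}dx'\Big)^{q/p}\Big)^{p/q}=\Big(\sum_i\|E^{(2)}_{U_i}h\|_{L^p(B\times B')}^q\Big)^{p/q},
\]
after which taking $p$-th roots gives the claimed bound. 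I do not anticipate a real obstacle here: the content of the lemma is essentially that the extra phase $e(x'Q_2(u))$ is a modulus-one modulation in $u$ that is compatible with the partition, so hypotheses about $\mc{M}_1$ transfer to $\mc{M}_2$ by a Fubini-plus-Minkowski argument. The only point worth flagging is the need for $p\ge q$ when applying Minkowski in the last step; without that assumption the argument would break.
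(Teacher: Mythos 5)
Your argument is correct and is essentially identical to the paper's own proof: freeze $x'$, absorb the extra phase into a modulated function $h_{x'}(u)=h(u)e(x'Q_2(u))$ so that $E^{(2)}$ becomes an instance of $E^{(1)}$, apply the hypothesis pointwise in $x'$, and reassemble with Minkowski's inequality in $L^{p/q}$ using $p\ge q$ (the section's standing assumption). Nothing further is needed.
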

\begin{proof}
The argument is nothing else but a combination of Fubini and Minkowski's inequality for integrals. We include the argument for readers' convenience.

Fix $h$ and $B'$. We will use $x_i$ to denote elements of $\R^{d_i}$. For $x_2\in \R^{d_2}$ define
$$g_{x_2}:\R^{d_0}\to\R,\;\;\;g_{x_2}(u)=h(u)e(x_2Q_2(u)).$$
Note that for each measurable set $U\subset \R^{d_0}$ we have
\begin{equation}
\label{abc7}
E_U^{(2)}h(x_0,x_1,x_2)=E_U^{(1)}g_{x_2}(x_0,x_1).
\end{equation}
Thus
$$\|E^{(2)}_{[0,1]^{d_0}}h\|_{L^p(B\times B')}^p=\int_{B'}\|E_{[0,1]^{d_0}}^{(1)}g_{x_2}\|_{L^p_{x_0,x_1}(B)}^pdx_2$$
$$\le C^p\int_{B'}(\sum_i\|E_{U_i}^{(1)}g_{x_2}\|_{L^p_{x_0,x_1}(B)}^q)^{p/q}dx_2.$$
The last expression can be dominated using Minkowski's inequality for $L^{p/q}_{x_2}$ (recall $p\ge q$) by
$$\le C^p(\sum_i\|E_{U_i}^{(1)}g_{x_2}\|_{L^p_{x_0,x_1,x_2}(B\times B')}^q)^{p/q}.$$
Another application of \eqref{abc7} will close the argument.
\end{proof}

The next lemma shows how to reduce the dimension of the manifold.

\begin{lem}
\label{abc9}
Let $Q_1:\R\to\R^{d_1}$ be measurable. Fix  $I_1,\ldots,I_l$, an arbitrary measurable partition of $[0,1]$ and fix $B$,  an arbitrary measurable subset of $\R^{1+d_1}$. For $i=1,3$, let $E^{(i)}$ denote the extension operator associated with the manifolds $\mc{M}_i$ defined as follows
$$\mc{M}_1=\{(r,s,Q_1(r)):r\in\R\ , s\in \R\},$$
$$\mc{M}_3=\{(r,Q_1(r)):r\in\R\}.$$
Let $C$ be a number such that the inequality
$$\|E^{(3)}_{[0,1]}h\|_{L^p(B)}\le C(\sum_i\|E_{I_i}^{(3)}h\|_{L^p(B)}^q)^{1/q}$$
holds for all measurable $h$.

Then for each measurable set $B'\subset \R$ and for each measurable $g$ we have
$$\|E^{(1)}_{[0,1]^{2}}g\|_{L^p(B\times B')}\le C(\sum_i\|E_{I_i\times [0,1]}^{(1)}g\|_{L^p(B\times B')}^q)^{1/q},$$
where $B\times B'$ is the subset of $\R^{2+d_1}$ defined (atypically)  as
$$\{(x_1, x_2 , x_3):\;(x_1, x_3)\in B,\; x_2\in B'\}.$$
\end{lem}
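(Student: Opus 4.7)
The plan is to mirror the strategy of Lemma \ref{abc8}: for each fixed value of the auxiliary variable $x_2 \in B'$, I will construct an auxiliary function $g_{x_2}$ of $r$ alone so that both extension operators appearing in the desired inequality reduce to $E^{(3)}$ applied to $g_{x_2}$. The natural choice, suggested by the fact that $s$ enters the phase of $E^{(1)}$ linearly (that is, $s$ parametrizes a flat direction of $\mc{M}_1$), is
$$
g_{x_2}(r) \;=\; \int_0^1 g(r,s)\, e(x_2 s)\, ds,
$$
because a direct application of Fubini in the definition of $E^{(1)}$ yields the key identity
$$
E^{(1)}_{I\times [0,1]} g(x_1, x_2, x_3) \;=\; E^{(3)}_{I} g_{x_2}(x_1, x_3)
$$
for every measurable $I\subset [0,1]$, and in particular for $I=[0,1]$ and for each $I=I_i$.

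With this identity in hand, apply the hypothesis to the function $g_{x_2}$ for each fixed $x_2 \in B'$ to obtain
$$
\|E^{(1)}_{[0,1]^2} g(\cdot, x_2, \cdot)\|_{L^p(B)} \;=\; \|E^{(3)}_{[0,1]} g_{x_2}\|_{L^p(B)} \;\le\; C \Big(\sum_i \|E^{(3)}_{I_i} g_{x_2}\|_{L^p(B)}^q\Big)^{1/q}.
$$
Raise to the $p$-th power, integrate in $x_2$ over $B'$, and invoke Minkowski's inequality for $L^{p/q}$ in the $x_2$ variable (valid since $p\ge q$), exactly as in the proof of Lemma \ref{abc8}. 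This lets one pull the integration past the $q$-sum; re-using the identity above on each summand to rewrite the result in terms of $E^{(1)}_{I_i \times [0,1]} g$ gives
$$
\|E^{(1)}_{[0,1]^2} g\|_{L^p(B\times B')}^p \;\le\; C^p \Big(\sum_i \|E^{(1)}_{I_i \times [0,1]} g\|_{L^p(B\times B')}^q\Big)^{p/q},
$$
and taking $p$-th roots yields the claim.

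There is really no substantive obstacle: the argument is a direct analogue of Lemma \ref{abc8}, ultimately resting on Fubini plus Minkowski's integral inequality. The only cosmetic difference from Lemma \ref{abc8} is the form of the auxiliary function. There, $h$ is modulated by a phase depending on $Q_2(u)$, which is possible because the extra ambient directions correspond to a curved piece of the manifold on which we can ``freeze'' coordinates. Here, by contrast, we take a partial Fourier transform of $g$ in the flat variable $s$, which encodes the dependence on the extra \emph{domain} direction into the new function $g_{x_2}$. The assumption $p\ge q$ plays the same technical role in both proofs, guaranteeing that Minkowski applies in $L^{p/q}(dx_2)$.
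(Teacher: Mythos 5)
Your proof is correct and is essentially identical to the paper's: the auxiliary function $g_{x_2}(r)=\int_0^1 g(r,s)e(x_2 s)\,ds$ is exactly the paper's $h_{x_2}$, the identity $E^{(1)}_{I\times[0,1]}g(x_1,x_2,x_3)=E^{(3)}_I g_{x_2}(x_1,x_3)$ is the same key observation, and the Fubini--Minkowski conclusion mirrors the argument of Lemma \ref{abc8} just as the paper indicates.
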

\begin{proof}
We note that for each measurable $I\subset \R$ and each $x_1,x_2\in\R$, $x_3\in \R^{d_1}$ we have
$$E^{(1)}_{I\times [0,1]}g(x_1,x_2,x_3)=E_I^{(3)}h_{x_2}(x_1,x_3),$$
where
$$h_{x_2}(r)=\int_{[0,1]}g(r,s)e(sx_2)ds.$$
The rest of the argument follows exactly like the one for Lemma \ref{abc9}. The details are left to the reader.
\end{proof}

Combining the two lemmas we get the following consequence.
\begin{cor}
\label{abc9'}
Let $Q_1:\R\to\R^{d_1}$ and $Q_2:\R^2\to \R^{d_2}$ be measurable. Let as before $E^{(i)}$, $i=2,3$, denote the extension operators associated with the manifolds
$$\mc{M}_2=\{(r,s,Q_1(r),Q_2(r,s)):r,s\in\R\}.$$
and
$$\mc{M}_3=\{(r,Q_1(r)):r\in\R\}.$$Fix  $I_1,\ldots,I_l$, an arbitrary measurable partition of $[0,1]$ and fix $B$,  an arbitrary measurable subset of $\R^{1+d_1}$. Let $C$ be a number such that the inequality
$$\|E^{(3)}_{[0,1]}h\|_{L^p(B)}\le C(\sum_i\|E_{I_i}^{(3)}h\|_{L^p(B)}^q)^{1/q}$$
holds for all measurable $h:[0,1]\to\C$. Then for each measurable set $B'\subset \R^{1+d_2}$ and for each measurable $h:[0,1]^2\to\C$ we have
$$\|E^{(2)}_{[0,1]^2}h\|_{L^p(B\times B')}\le C(\sum_i\|E_{I_i\times [0,1]}^{(2)}h\|_{L^p(B\times B')}^q)^{1/q},$$
where here $B\times B'$ is the subset of $\R^{2+d_1+d_2}$ defined (atypically)  as
$$\{(x_1, x_2 , x_3, x_4):\;(x_1, x_3)\in B,\; (x_2,x_4)\in B'\}.$$
\end{cor}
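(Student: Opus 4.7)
My plan is to combine the substitution tricks from the two preceding lemmas into a single step, rather than applying them sequentially. The sequential approach (first Lemma \ref{abc9}, then Lemma \ref{abc8}) would only handle product sets $B''\times B'''\subset \R\times \R^{d_2}$ in place of the general $B'\subset \R^{1+d_2}$, and the corollary asserts the stronger statement with $B'$ arbitrary. The key observation is the identity
$$E^{(2)}_{I\times [0,1]} h(x_1,x_2,x_3,x_4)=E^{(3)}_I h_{x_2,x_4}(x_1,x_3),$$
valid for each measurable $I\subset \R$, where
$$h_{x_2,x_4}(r):=\int_0^1 h(r,s)\, e\bigl(sx_2+x_4\cdot Q_2(r,s)\bigr)\, ds.$$
This follows by writing the $E^{(2)}$ integral as an iterated integral in $s$ and $r$ and absorbing the $e(sx_2+x_4\cdot Q_2(r,s))$ factor into the inner $s$-integral. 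It packages simultaneously the "freeze and integrate out $s$" trick of Lemma \ref{abc9} and the "absorb the $Q_2$ phase" trick of Lemma \ref{abc8}.

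Given the identity, the rest is the same Fubini plus Minkowski scheme used in both preceding proofs. By Fubini and the atypical product structure of $B\times B'$,
$$\|E^{(2)}_{[0,1]^2}h\|_{L^p(B\times B')}^p=\int_{B'}\|E^{(3)}_{[0,1]} h_{x_2,x_4}\|_{L^p(B)}^p\, dx_2\, dx_4.$$
For each fixed $(x_2,x_4)$ the hypothesis applied to the one-variable function $h_{x_2,x_4}$ yields a bound by $C^p\bigl(\sum_i \|E^{(3)}_{I_i} h_{x_2,x_4}\|_{L^p(B)}^q\bigr)^{p/q}$. One then invokes Minkowski's inequality for $L^{p/q}_{x_2,x_4}$ (legitimate since $p\ge q$) to pull the finite sum over $i$ outside the integration over $B'$, and applies the identity once more to recognize
$$\int_{B'}\|E^{(3)}_{I_i} h_{x_2,x_4}\|_{L^p(B)}^p\, dx_2\, dx_4=\|E^{(2)}_{I_i\times [0,1]} h\|_{L^p(B\times B')}^p.$$
Taking $p$-th roots delivers the claimed decoupling.

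There is no real obstacle here; the proof is essentially bookkeeping once the unified identity is written down. The only point worth flagging is that the $x_2$ and $x_4$ variables are never separated in the argument — they are carried together as joint parameters of the auxiliary function $h_{x_2,x_4}$ — which is precisely what allows the conclusion to hold for a general, non-product set $B'\subset \R^{1+d_2}$.
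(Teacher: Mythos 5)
Your proof is correct, and it takes a slightly different (and in fact sharper) route than the paper. The paper obtains the corollary simply by composing Lemma \ref{abc9} with Lemma \ref{abc8}: first pass from $\mc{M}_3$ to $\{(r,s,Q_1(r))\}$, picking up the extra dual variable $x_2$ ranging over a set $B''\subset\R$, then absorb the phase $x_4\cdot Q_2(r,s)$ via Lemma \ref{abc8}, picking up $x_4$ ranging over $B'''\subset\R^{d_2}$. As you observe, this literal composition only yields the conclusion for product sets $B'=B''\times B'''\subset\R\times\R^{d_2}$, whereas the statement allows an arbitrary measurable $B'\subset\R^{1+d_2}$. Your one-shot identity $E^{(2)}_{I\times[0,1]}h(x_1,x_2,x_3,x_4)=E^{(3)}_Ih_{x_2,x_4}(x_1,x_3)$, with the parameters $x_2,x_4$ frozen jointly rather than one at a time, removes this restriction and proves the corollary exactly as stated; the remaining Fubini--Minkowski bookkeeping (legitimate since $p\ge q$, the standing assumption of the section) is the same mechanism used in the proofs of the two lemmas, so nothing beyond the merged substitution is needed. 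For the paper's purposes the discrepancy is harmless: in the application (Claim \ref{2903claim3.3}) the sets $B,B'$ are balls and one works with the weighted version, where $w_{B_K}$ is comparable to a product of lower dimensional weights after adjusting the unspecified exponent $C$, so the product-set version suffices there. Still, your direct argument is the cleaner justification of the corollary in the generality in which it is stated.
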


We will apply this corollary with $\mc{M}_2=\mc{S}_{2,2}$ and $Q_1(r)=r^2$, see the proof of Claim \ref{2903claim3.3}. Also, we will apply Lemma \ref{abc8}  with $\mc{M}_1=\mc{S}_{2,2}$ and $\mc{M}_2=\mc{S}_{2,3}$, in order to derive inequality \eqref{0720e3.35h}. In each case $I_i$ will be intervals and $B,B'$ will be balls of the same radius. The relation between the size of the intervals and the radii of the balls will be different in the two cases. We must also mention that we will in fact use weighted versions of these results, whose proofs are only technical variations of the proofs given above.

\section{Parabolic rescaling}
We will repeatedly use the following result (see Proposition 7.1 from \cite{BD2}), that will be referred to as  parabolic rescaling.
\begin{lem}
\label{abc18}
Let $k\ge 2$, and let
$n=\binom{k+2}{k}-1$, corresponding to this $k$ and $d=2$ as in \eqref{2405e1.15}.
Let also $0<\delta<\sigma\le 1$.

Then for each square $R\subset [0, 1]^2$ with side length $\sigma$ and each ball $B\subset \R^n$ with radius $\delta^{-k}$ we have
\beq
\|E_{R}^{(2,k)} g\|_{L^p(w_B)} \le V_{p,q}^{(2,k)}(\frac{\delta}{\sigma}) (\sum_{R'\subset R:\; l(R')=\delta} \|E_{R'}^{(2,k)} g\|_{L^{p}(w_B)}^{q})^{1/q}.
\endeq
\end{lem}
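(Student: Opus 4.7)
The plan is to reduce the decoupling of $R$ at scale $\delta$ to the decoupling of $[0,1]^2$ at scale $\delta/\sigma$, which is by definition governed by $V^{(2,k)}_{p,q}(\delta/\sigma)$. The bridge is an affine change of variables on both the physical side (parameter $t$) and the frequency side ($x$), tailored to the monomial structure of $\Phi_{2,k}$.

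Concretely, let $t_0$ be the corner of $R$ and substitute $t = t_0 + \sigma u$ with $u \in [0,1]^2$. For each multi-index $\alpha = (i_1,i_2)$ with $1\le |\alpha|\le k$, the binomial theorem gives
$$
t_1^{i_1}t_2^{i_2} \;=\; \sum_{0\le \beta\le \alpha} \binom{\alpha}{\beta}\, t_0^{\alpha-\beta}\, \sigma^{|\beta|}\, u_1^{j_1} u_2^{j_2}.
$$
Plugging this into the phase of $E_R^{(2,k)}g(x)$ and collecting the terms by the new multi-index $\beta$, I would isolate the $\beta = 0$ contribution as a modulation $e(\Psi(t_0,x))$ that can be pulled outside, and define a new frequency variable $y = T(x)$ by
$$
y_\beta \;=\; \sigma^{|\beta|} \sum_{\alpha\ge \beta} \binom{\alpha}{\beta}\, t_0^{\alpha-\beta}\, x_\alpha, \qquad 1\le |\beta|\le k.
$$
Ordering multi-indices so that $|\alpha|$ is non-decreasing, $T$ is upper triangular with diagonal $\sigma^{|\alpha|}$, hence $|\det T|=\sigma^A$ for an explicit $A=A(k)$. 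This yields the pointwise identity
$$
E_R^{(2,k)} g(x) \;=\; \sigma^2\, e(\Psi(t_0,x))\, E_{[0,1]^2}^{(2,k)}\bigl(g(t_0+\sigma\,\cdot)\bigr)\bigl(T(x)\bigr).
$$

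Taking $L^p(w_B)$ norms and changing variables $y=T(x)$ produces an $L^p$ norm over the ellipsoid $T(B)$ with the transported weight. The key numerology: the axes of $T(B)$ corresponding to $|\beta|=l$ have length $\asymp \sigma^{l}\delta^{-k}$; the shortest ones ($l=k$) have length exactly $(\delta/\sigma)^{-k}$, while the longer ones can be handled by tiling. So I cover $T(B)$ (and its tail, exploiting the rapid decay of the weight) by a finitely overlapping family of balls $\widetilde B$ of radius $(\delta/\sigma)^{-k}$. On each $\widetilde B$, the definition of $V_{p,q}^{(2,k)}(\delta/\sigma)$ applies to $E_{[0,1]^2}^{(2,k)}\tilde g$, where $\tilde g(u)=g(t_0+\sigma u)$. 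Cubes $\Delta \subset [0,1]^2$ of side length $\delta/\sigma$ correspond under $u\mapsto t_0+\sigma u$ to subsquares $R'\subset R$ of side length $\delta$; undoing the change of variables in $x$ (the same $T$) turns $E_\Delta^{(2,k)}\tilde g$ pointwise back into $E_{R'}^{(2,k)} g$ up to the modulation by $e(\Psi(t_0,\cdot))$, which is killed by absolute values. Summing the $\ell^q$ estimates over the overlapping cover of $T(B)$ and using the triangle/Minkowski inequality in $\ell^q$ returns an $\ell^q$-sum over $R'$ of $\|E_{R'}^{(2,k)}g\|_{L^p(w_B)}$, producing the claimed inequality.

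The genuinely delicate point is the weight bookkeeping. Since $T(B)$ is a long, thin ellipsoid rather than a ball, one cannot literally invoke the decoupling definition on $T(B)$; one must cover by $(\delta/\sigma)^{-k}$-balls and aggregate. The rapid polynomial decay built into $w_B$ is crucial here, both for absorbing the overlap of the covering and for identifying $w_{T(B)}$ pointwise with a sum $\sum_{\widetilde B} w_{\widetilde B}$ up to harmless constants. This kind of weighted ellipsoid-to-ball reduction is entirely standard in the decoupling literature (and in particular is explicit in \cite{BD2}), so the main task is really to verify that the triangular structure of $T$ makes the geometric inflation exactly $(\delta/\sigma)^{-k}$ in the critical directions; no genuinely new idea is required beyond the ones already present in the cited references.
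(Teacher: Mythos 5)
Your argument is correct and is precisely the standard affine rescaling proof: the paper itself offers no proof, deferring to Proposition 7.1 of \cite{BD2}, and that reference proceeds exactly as you do — translate/dilate the parameter square, absorb the $\beta=0$ term as a modulation, use the triangular frequency change of variables, cover the resulting ellipsoid by balls of radius $(\delta/\sigma)^{-k}$, and recombine with Minkowski (using $p\ge q$) and the usual weight bookkeeping. The only cosmetic caveat is that this yields the stated inequality up to an absolute constant coming from the covering and the weights, which is how such rescaling lemmas are always understood and used.
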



\section{A new decoupling for $\mc{S}_{2,2}$}\label{0720section22}
\label{abcsec4}

Recall the following two dimensional surface in $\R^5$ introduced earlier
\beq\label{0604e1.1}
\mc{S}_{2,2}=\{(r, s, r^2, rs, s^2): (r, s)\in [0, 1]^2\}.
\endeq
Throughout this section we will simplify notation and write $\mc{S}$ for $\mc{S}_{2,2}$ and $E$ for $E^{(2,2)}$.

For $p,q\ge 2$ and $N\in[1,\infty)$ we denote by $V(N, p, q)$ the smallest constant such that
\beq\label{3005e1.2}
\|E_{[0, 1]^2}g\|_{L^p(w_{B_N})}\le V(N, p, q) (\sum_{\substack{\Delta\subset [0, 1]^2\\ l(\Delta)=N^{-1/2}}}\|E_{\Delta}g\|_{L^p(w_{B_N})}^q)^{1/q},
\endeq
holds for all measurable functions $g: [0, 1]^2\to \C$ and all balls $B_N\subset \R^5$ with radius $N$. An inequality of this form will be referred to as an $l^q L^p$ decoupling. We note that
$$V(N, p, q)=V^{(2,2)}_{p,q}(N^{-1/2}).$$

For the case $q=p$, Bourgain and Demeter \cite{BD2} proved the following sharp estimates
\begin{thm}\label{0626theorem1.1}
For each $p\ge 2$ and each $\epsilon>0$, there exists $C_{p, \epsilon}>0$ such that
\beq\label{0626e1.3}
 V(N, p, p)\le
 \begin{cases}
 \hfill C_{\epsilon, p} N^{\frac{1}{2}-\frac{1}{p}+\epsilon}, \hfill &  \text{if } 2\le p\le 8\\
 \hfill C_{\epsilon, p} N^{1-\frac{5}{p}+\epsilon}, \hfill  & \text{ if } p\ge 8.
 \end{cases}
\endeq
\end{thm}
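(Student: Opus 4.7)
The plan is to identify $p=8$ as the critical exponent (the two branches of the claimed bound meet there at $N^{3/8+\epsilon}$) and reduce the whole family of estimates to the single sharp bound $V(N,8,8)\lesssim_\epsilon N^{3/8+\epsilon}$ via interpolation. For $2\le p\le 8$ I would interpolate the critical estimate against the trivial $l^2L^2$ decoupling $V(N,2,2)\lesssim 1$, which holds by Plancherel since the Fourier supports of $\{E_\Delta g\}_\Delta$ lie in essentially disjoint $N^{-1}$-neighborhoods of $\mathcal{S}_{2,2}$ over $\Delta$. Writing $1/p=(1-\theta)/2+\theta/8$ and applying vector-valued complex interpolation to $g\mapsto (E_\Delta g)_\Delta$ gives $V(N,p,p)\lesssim V(N,2,2)^{1-\theta}V(N,8,8)^{\theta}\lesssim N^{1/2-1/p+\epsilon'}$. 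For $p\ge 8$ I would interpolate instead against the trivial $l^\infty L^\infty$ bound $V(N,\infty,\infty)\lesssim N$, which follows from the triangle inequality and the fact that there are $\sim N$ cubes $\Delta$ of side $N^{-1/2}$; a direct exponent computation recovers the second branch $N^{1-5/p+\epsilon}$.

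The heart of the proof is therefore the critical estimate $V(N,8,8)\lesssim_\epsilon N^{3/8+\epsilon}$, which I would attack by the Bourgain--Guth multilinear decoupling scheme combined with induction on the scale $N$. Fix a large $K$, partition $[0,1]^2$ into $K^2$ subsquares $\tau$ of side $K^{-1}$, and split $\|Eg\|_{L^8(w_{B_N})}^8$ into a \emph{narrow} contribution, where at each point $x$ the value $|Eg(x)|$ is dominated by the contribution of $O(1)$ subsquares $\tau$, and a \emph{broad} multilinear contribution, where at least three $\tau_1,\tau_2,\tau_3$ with transverse tangent planes contribute simultaneously. The narrow part is reduced via parabolic rescaling (Lemma \ref{abc18}) applied on each $\tau$: after the affine renormalization sending $\tau$ to $[0,1]^2$, the problem on $\tau$ becomes one of estimating $V(N/K^2,8,8)$, which is precisely what drives the induction on scale. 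The broad part is handled using the multilinear Kakeya inequality of Bennett--Carbery--Tao in its ball-inflation form: a transverse triple of tangent $2$-planes at points of $K^{-1}$-separated cubes yields a trilinear $L^{8/3}$ estimate at scale $N$ with an acceptable $K^{O(1)}$ overhead, which when combined with Hölder's inequality and $L^2$-orthogonality upgrades to the desired linear $L^8$ bound with no loss in $N$.

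The main obstacle is calibrating the constants so that the bootstrap closes with only a subpolynomial overall loss; the standard Pramanik--Seeger iteration (choose $K=N^\eta$ for $\eta$ small and absorb all $K^{O(1)}$ overheads into $N^{\epsilon}$) is the resolution, but it requires verifying that the narrow-to-broad reduction does not incur a loss that beats the multilinear gain. A secondary technical requirement is transversality of the tangent spaces of $\mathcal{S}_{2,2}$ at $K^{-1}$-separated points, which is the $(d,k,l)=(2,2,1)$ case of Conjecture \ref{conjecture0}; this case is elementary to verify directly from the explicit form of $\Phi_{2,2}$, as the $2$-dimensional tangent planes at three generic points of $\mathcal{S}_{2,2}$ span all of $\mathbb{R}^5$. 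Sharpness of both branches is established by the standard extremal constructions: taking $g\equiv 1$ on $[0,1]^2$ produces constructive interference on an anisotropic $1\times1\times N\times N\times N$ tube through the origin and saturates the first branch $N^{1/2-1/p}$, whereas taking $g=\mathbf{1}_\Delta$ on a single $N^{-1/2}$-cube saturates the second branch $N^{1-5/p}$; the crossover at $p=8$ is exactly where these two extremal regimes balance.
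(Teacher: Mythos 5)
The paper does not reprove Theorem \ref{0626theorem1.1}: it is quoted from \cite{BD2}, together with the remark that the full range follows by interpolation from the critical bound \eqref{abc1}. Your first paragraph (reduction to $p=8$ by interpolating the decoupling constants against the trivial endpoints $V(N,2,2)\lesssim 1$ and $V(N,\infty,\infty)\lesssim N$) is consistent with that remark, and the exponent arithmetic is correct. The problem is your second and third paragraphs, where you sketch a proof of the critical estimate $V(N,8,8)\lesssim_\epsilon N^{3/8+\epsilon}$ itself; as sketched, that argument has genuine gaps.

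First, for the two--dimensional surface $\mathcal{S}_{2,2}\subset\R^5$, transversality is \emph{not} implied by separation of caps, and a transverse triple is not the right multilinear input. Points lying on a line or a conic in $[0,1]^2$ can be $K^{-1}$-separated while their tangent planes violate the Brascamp--Lieb condition \eqref{1712e1.13}: for points on $s=0$ the tangent planes are spanned by $(1,0,2r,0,0)$ and $(0,1,0,r,0)$, so for $V=\langle e_5\rangle$ every projection $\pi_j(V)$ is trivial and \eqref{1712e1.13} fails, no matter how many or how separated the points are. This is precisely why the correct notion (Definition \ref{dfek1}, Lemma \ref{0604lemma2.4}, Proposition \ref{pfek1}, i.e.\ Proposition 4.4 of \cite{BD2}) involves $m\ge 5$ caps avoiding the $\frac{10}{K}$-neighborhood of the zero set of an arbitrary polynomial of degree $\le 2$, and why the ``narrow'' part of the Bourgain--Guth decomposition is not ``$O(1)$ caps dominate'' (that is the hypersurface paradigm) but rather the contribution of caps clustering along such a degree-$\le 2$ variety, which must be decoupled separately (for $l^pL^p$ a trivial decoupling suffices, cf.\ Case 3 of Proposition \ref{2903p3.2} and Theorem \ref{abc37}). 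Relatedly, your reading of Conjecture \ref{conjecture0} for $(d,k,l)=(2,2,1)$ as ``tangent planes at three generic points span $\R^5$'' misstates what must be verified: the conjecture concerns minors of $\mathcal{M}^{(1)}_V(t)$ for \emph{every} subspace $V$, exactly so that the BCCT condition holds for all $V$.

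Second, the claim that a transverse (tri)linear multilinear--Kakeya estimate at scale $N$, combined with H\"older and $L^2$ orthogonality, ``upgrades to the desired linear $L^8$ bound with no loss in $N$'' skips the actual engine of the proof. A single application of the Brascamp--Lieb/multilinear Kakeya input only yields a multi-scale inequality of the type of Proposition \ref{gjityiophjytophpotigirti0-we=fdcwee=w=}, which trades a fraction $\kappa$ of the decoupling at scale $N^{-1/2}$ for information at the finer scale $N^{-1}$; the critical exponent $\frac38$ emerges only after iterating this inequality over many scales, interleaved with parabolic rescaling (Lemma \ref{abc18}) and a bootstrap on the putative growth exponent of $V(N,8,8)$ --- exactly the scheme run in Subsection \ref{section4} of this paper for the $l^qL^8$ variant, following \cite{BD1,BD2}. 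Choosing $K=N^\eta$ and absorbing $K^{O(1)}$ losses does not rescue a single-scale argument, and the degree of multilinearity must be taken large (at least $5$, with the transversality above), not $3$. So your proposal is fine as a reduction of Theorem \ref{0626theorem1.1} to \eqref{abc1}, which is how the paper treats it, but it does not constitute a proof of \eqref{abc1}.
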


This result follows via interpolation from the sharp estimate at the critical exponent
\begin{equation}
\label{abc1}
V(N,8,8)\lesssim_\epsilon N^{\frac12-\frac18+\epsilon}.
\end{equation}
It turns out that this estimate (i.e. with $q=8$) is not strong enough for our purposes. Instead, we need an $l^qL^8$ decoupling with a $q$ that matches the one from Lemma \ref{main1}. As remarked there, this forces the restriction $q<\frac{40}{9}$.

We will prove the following  stronger estimate.
\begin{thm}\label{0626theorem1.3}
For each $q\in [\frac{8}{3}, 8]$ and each $\epsilon>0$, there exists $C_{q, \epsilon}>0$ such that
\beq\label{0626e1.10}
V(N, 8, q)\le C_{q, \epsilon} N^{\frac{1}{2}-\frac{1}{q}+\epsilon}.
\endeq
Moreover, the power of $N$ is sharp for each $q$.
\end{thm}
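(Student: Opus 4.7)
The first step is to reduce the full range to the endpoint $q=8/3$, namely to the bound
\begin{equation}
\label{planeq1}
V(N,8,8/3)\lesssim_\epsilon N^{1/8+\epsilon}.
\end{equation}
Since there are $N$ squares of side $N^{-1/2}$ in $[0,1]^2$, H\"older's inequality on sequences gives $\|(a_\Delta)\|_{l^{8/3}}\le N^{3/8-1/q}\|(a_\Delta)\|_{l^q}$ for every $q\ge 8/3$, so applying this inside \eqref{planeq1} with $a_\Delta=\|E_\Delta g\|_{L^8(w_{B_N})}$ yields
\[
V(N,8,q)\le V(N,8,8/3)\cdot N^{3/8-1/q}\lesssim_\epsilon N^{1/2-1/q+\epsilon},
\]
as required. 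For sharpness of the exponent $1/2-1/q$, I would test with $g=\mathbf{1}_{[0,1]^2}$: continuity of $E_{[0,1]^2}g$ at the origin gives $\|E_{[0,1]^2}g\|_{L^8(w_{B_N})}\gtrsim 1$, while a direct computation using the dual tube (of dimensions $N^{1/2}\!\times\! N^{1/2}\!\times\! N\!\times\! N\!\times\! N$) of a cap $\Delta$ of side $N^{-1/2}$ yields $\|E_\Delta g\|_{L^8(w_{B_N})}\sim N^{-1/2}$; hence $(\sum_\Delta\|E_\Delta g\|^q)^{1/q}\sim N^{1/q-1/2}$, forcing $V(N,8,q)\gtrsim N^{1/2-1/q}$.

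The plan for \eqref{planeq1} is to run a Bourgain--Guth iteration at an auxiliary parameter $K$. Partition $[0,1]^2$ into $K^2$ caps $\tau$ of side $K^{-1}$ and cover $B_N$ by balls $B_{K^2}$; on each such ball, split $E_{[0,1]^2}g=E_{\mathrm{broad}}+E_{\mathrm{narrow}}$ according to whether three pairwise transverse caps contribute comparably, or all significant caps concentrate near a single line in $[0,1]^2$. For the broad piece I would apply the multilinear Kakeya / ball--inflation machinery (Lemma~\ref{main1} in the $l=1$ regime), whose allowed summation range $q<40/9$ comfortably contains $q=8/3$, producing (up to $K^\epsilon$) the desired $l^{8/3}L^8$-type bound.

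The narrow piece is where the real novelty, and the main obstacle, lies: the trivial $l^8$ triangle decoupling used in \cite{BD2} costs $K^{7/8}$ per narrow strip, which is fatal at $q=8/3$. Instead, I would invoke the dimensional reductions of Section~\ref{abcsec2}. After an affine change of variables we may assume the offending line is horizontal, so the significant caps all lie in a strip $[0,1]\times J$ with $|J|\sim K^{-1}$. A weighted version of Corollary~\ref{abc9'} with $Q_1(r)=r^2$ and $Q_2(r,s)=(rs,s^2)$ then bounds the strip-to-caps decoupling by the parabola $l^2L^8$ decoupling constant at scale $K^{-1}$. Since the parabola critical exponent is $6$, one has $V^{(1,2)}_{8,2}(K^{-1})\lesssim K^{1/8+\epsilon}$ by interpolating the sharp $l^2L^6$ parabola decoupling of \cite{BD1} with the trivial $L^\infty$ bound $V^{(1,2)}_{\infty,2}(K^{-1})\lesssim K^{1/2}$; passing from $l^2$ to $l^{8/3}$ via H\"older on the $K$ caps along the strip costs an extra factor $K^{1/2-3/8}=K^{1/8}$, for a total narrow loss of $K^{1/4+\epsilon}$. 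Combining the broad and narrow contributions with parabolic rescaling (Lemma~\ref{abc18}) and using Minkowski's inequality (valid since $p=8\ge q=8/3$) to reassemble the pieces across the $B_{K^2}$'s yields the key bootstrap
\[
V(N,8,8/3)\le C_\epsilon K^{1/4+\epsilon}\,V(N/K^2,8,8/3),
\]
which, iterated $\sim\log_K N$ times until $N/K^{2m}=O(1)$, delivers \eqref{planeq1} after optimizing in $\epsilon$. The delicate point is the narrow step: the exact split $1/4=1/8+1/8$ between the parabola decoupling loss and the subsequent H\"older loss is precisely what the conjectured sharp exponent $1/2-1/q=1/8$ at $q=8/3$ demands, and the proof closes only because both contributions land on the correct side of this balance.
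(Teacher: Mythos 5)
Your reduction to the endpoint $q=\frac83$ by H\"older, your lower-bound example, and your treatment of the narrow contribution are all sound; the narrow step is in substance the paper's new ingredient (Claim~\ref{2903claim3.3}): the parabola decoupling of \cite{BD1} transported through the dimension reduction of Corollary~\ref{abc9'}, and your loss $K^{1/4+\epsilon}$ for caps of side $K^{-1}$ matches the exponent $\frac12-\frac1{2q}-\frac3{2p}$ there after rescaling, with the split $\frac18+\frac18$ being exactly the borderline that forces $q\ge\frac83$. Two caveats on that part: for $\mc{S}_{2,2}$ the narrow alternative is concentration near the zero set of a polynomial of degree one \emph{or two} (a conic cannot be straightened by an affine change of variables; the paper instead writes the zero set as $O(1)$ graphs of bounded slope and removes one cap per strip), and the broad alternative cannot be ``three pairwise transverse caps'': transversality for $\mc{S}_{2,2}\subset\R^5$ (Definition~\ref{dfek1}, Lemma~\ref{0604lemma2.4}) involves many caps avoiding such quadratic zero sets.

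The genuine gap is the broad term, which is where the whole difficulty of the theorem sits. You assert that one application of ``multilinear Kakeya / ball inflation'' gives the broad piece the sharp $l^{8/3}L^8$ bound up to $K^\epsilon$, citing Lemma~\ref{main1} with $l=1$; but that lemma concerns the cubic surface $\mc{S}_{2,3}\subset\R^9$ (its constraint $q<\frac{40}9$ comes from $\frac{l(l+3)p}{2n}\ge q$ with $n=9$, $p=20$) and is not a tool for decoupling $\mc{S}_{2,2}\subset\R^5$. More importantly, no single-scale multilinear estimate yields the sharp constant here: the Brascamp--Lieb/multilinear restriction input for $\mc{S}_{2,2}$ lives at the exponent $\frac{2n}{d_0}=5$, and turning it into a sharp $L^8$ statement requires the genuinely multi-scale argument of the paper --- the linear-versus-multilinear equivalence (Theorem~\ref{2903thm3.1}), the multiscale inequality (Corollary 6.7 of \cite{BD2}) iterated over $s$ scales as in \eqref{fe21}--\eqref{fe23}, and the bootstrapping on the exponent $\gamma_q$, where the hypothesis $q\ge\frac83$ is used only at the very end via \eqref{abc12}. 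Your proposed bootstrap $V(N,8,\frac83)\le C_\epsilon K^{1/4+\epsilon}V(N/K^2,8,\frac83)$ conceals this: with a constant uniform in $K$ it is, by parabolic rescaling, essentially equivalent to $V(K^2,8,\frac83)\lesssim_\epsilon K^{1/4+\epsilon}$, i.e.\ to the theorem itself at scale $K^2$, so deducing it from the broad--narrow splitting begs the question exactly on the broad part; and the Bourgain--Guth decomposition does not bound the broad part by $K^{1/4+\epsilon}$ times the scale-$K^{-1}$ norm, but by $K^{O(1)}$ times the multilinear decoupling constant at scale $N$, which must then be estimated by the iteration just described. As written, the core of the proof --- control of the transverse contribution --- is missing.
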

Let us first get an idea about sharpness. As observed before, see for example Theorem 2.2 in \cite{BD2}, we have the following exponential sum estimate, valid for each $a_{n,m}\in\C$
$$\|\sum_{n=1}^{N}\sum_{m=1}^Na_{n,m}e(nx_1+mx_2+n^2x_3+nmx_4+m^2x_5)\|_{L^p([0,1]^5)}\lesssim V(N^2,p,q)\|a_{n,m}\|_{l^q}.$$
By taking $a_{n,m}\equiv 1$ and by restricting $|x_1|,|x_2|\ll N^{-1}$ and $|x_3|,  |x_4|,|x_5|\ll N^{-2}$, the left hand side is seen to be  $\gtrsim N^{2-\frac8p}$. When $p=8$ this leads to the lower bound
$$V(N^2, 8, q)\gtrsim N^{1-\frac{2}{q}}.$$

In all fairness, we will only use  estimate \eqref{0626e1.10} later with $q=\frac{8}{3}$. To show the key differences with the case $q=8$, it will help to present things in the slightly larger generality $\frac83\le q\le 8$. The fact that \eqref{0626e1.10} implies \eqref{abc1} follows from H\"older's inequality. It is not difficult to see that \eqref{0626e1.10} fails for $2\le q<\frac83$. For each such $q$ there will be two critical exponents $p$, as opposed to just one for $\frac83\le q\le 8$ ($p=8$). We hope to address this new phenomenon elsewhere.

We will prove Theorem \ref{0626theorem1.3} in the remainder of the section.

\subsection{The Brascamp-Lieb inequality and a transversality condition}\label{0705subsection1.2}

Let $M$ be a positive integer. For $1\le j\le M$, let $V_j$ be a $d_0-$dimensional linear subspace of $\R^n$.  Define
\beq
\Lambda(f_1, f_2, ..., f_M)=\int_{\R^n} \prod_{j=1}^M f_j (\pi_j (x))dx,
\endeq
for $f_j: V_j\to \C$. Here $\pi_j$ denotes the orthogonal projection  onto $V_j$. We recall the following theorem from Bennett, Carbery, Christ and Tao \cite{BCCT}.
\begin{thm}(\cite{BCCT})
\label{abc14}
Given $p\ge 1$, the estimate
\beq\label{1803e1.8}
|\Lambda(f_1, f_2, ..., f_M)| \lesim \prod_{j=1}^M \|f_j\|_{p},
\endeq
holds for arbitrary $f_j\in L^p(V_j)$ if and only if $np=d_0M$ and the following Brascamp-Lieb transversality condition is satisfied
\beq\label{1803e1.9}
dim(V) \le \frac{n}{d_0M} \sum_{j=1}^M dim(\pi_j(V)), \text{ for each (linear) subspace } V\subset \R^n.
\endeq
\end{thm}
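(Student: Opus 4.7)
The plan is to establish both directions of the equivalence, with the forward implication (necessity of the scaling and transversality conditions) being the easier one, and the sufficiency the substantive content.

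For the necessity of $np = d_0 M$, I would test \eqref{1803e1.8} with dilated inputs $f_j(\cdot) \mapsto f_j(\lambda\,\cdot)$. The right hand side scales like $\lambda^{-d_0 M /p}$ (since $f_j$ lives on a $d_0$-dimensional space), while after a change of variable $x \mapsto x/\lambda$ in $\Lambda$, the left hand side scales like $\lambda^{-n}$; forcing the two to match for all $\lambda > 0$ yields $np = d_0 M$. For \eqref{1803e1.9}, given a subspace $V \subset \R^n$, I would plug in a family of anisotropic Gaussians $f_j(y) = \exp(-\varepsilon^{-2}|\pi_{\pi_j(V)} y|^2 - |y|^2)$ on $V_j$, compute both sides explicitly via Gaussian integrals as $\varepsilon \to 0$, and read off the dimension inequality from comparing the powers of $\varepsilon$ that appear.

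For sufficiency I would follow the Gaussian extremiser route. The first step is a theorem of Lieb (proven via rearrangement and weak compactness arguments) asserting that, for any Brascamp--Lieb datum $(V_j, \pi_j, p)$, the supremum of $\Lambda(f_1, \ldots, f_M)/\prod_j \|f_j\|_p$ over arbitrary $f_j$ coincides with the supremum over centred Gaussians $f_j(y) = \exp(-\langle A_j y, y\rangle)$ with $A_j$ positive definite on $V_j$. This reduces the problem to the finite-dimensional task of bounding
\[
F(A_1, \ldots, A_M) = \frac{\prod_j (\det A_j)^{p/2}}{\det\!\bigl(\sum_j p\,\pi_j^{*} A_j \pi_j\bigr)^{1/2}}.
\]
The second step is the heat flow argument of Bennett--Carbery--Christ--Tao: apply $e^{t\Delta}$ to each $f_j$ simultaneously and differentiate the quantity $\Lambda(u_1(t), \ldots, u_M(t))/\prod_j \|u_j(t)\|_p$ in $t$. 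Integration by parts reduces monotonicity to a pointwise inequality between quadratic forms in the instantaneous covariances, and sending $t \to \infty$ drives the inputs to Gaussians, extracting the bound for general $f_j$ from the Gaussian bound.

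The hard part, by a wide margin, is making the heat flow monotonicity rigorous under the sole hypothesis \eqref{1803e1.9}. The monotonicity inequality, when spelled out, is itself a Brascamp--Lieb-type inequality for the flowing Gaussians, and turning the dimension condition \eqref{1803e1.9} into the required positive semidefiniteness of a matrix expression involving $\sum_j \pi_j^{*} A_j \pi_j$ and the individual $\pi_j^{*} A_j \pi_j$ requires the factorisation/structural lemma of \cite{BCCT} that decomposes any Brascamp--Lieb datum saturating \eqref{1803e1.9} on some subspace along that subspace and its orthogonal complement. Because this machinery is genuinely delicate and already fully developed in \cite{BCCT}, in the body of the paper the theorem is simply invoked as stated and used as a black box in Lemma \ref{main1}.
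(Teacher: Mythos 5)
The paper itself contains no proof of this statement: Theorem \ref{abc14} is imported verbatim from \cite{BCCT} and used as a black box (its only role here is to feed the multilinear Kakeya/ball-inflation machinery), a fact you correctly acknowledge at the end. So there is no internal argument to compare against; what you offer is an outline of the standard proof from the Bennett--Carbery--Christ--Tao literature. As a roadmap it identifies the right ingredients, though one bibliographic nuance is worth noting: the route you sketch (Lieb's theorem on Gaussian extremisability plus heat-flow monotonicity) is the one of the companion GAFA paper of the same authors, whereas the reference actually cited proves the finiteness characterisation by an induction on dimension through critical subspaces, without Lieb's theorem. Either way, your sufficiency discussion is a pointer to that machinery rather than a proof, which is consistent with how the present paper uses the result, but it should not be represented as a self-contained argument.

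There is also a concrete error in the one part you do carry out, the necessity of \eqref{1803e1.9}. Your test functions $f_j(y)=\exp(-\varepsilon^{-2}|P_{\pi_j(V)}y|^2-|y|^2)$ are oriented the wrong way: the large weight $\varepsilon^{-2}$ on the component along $\pi_j(V)$ concentrates $f_j$ on the part of $V_j$ \emph{orthogonal} to $\pi_j(V)$. Computing the Gaussian integrals, one finds $\prod_j\|f_j\|_p\sim\varepsilon^{\frac1p\sum_j\dim\pi_j(V)}$ while $\Lambda(f_1,\ldots,f_M)\sim\varepsilon^{\,r}$ with $r=\operatorname{rank}\big(\sum_j\pi_j P_{\pi_j(V)}\pi_j\big)=\dim\big(\pi_1(V)+\cdots+\pi_M(V)\big)$; letting $\varepsilon\to0$ in \eqref{1803e1.8} then yields $\dim\big(\sum_j\pi_j(V)\big)\ge\frac1p\sum_j\dim\pi_j(V)$, an inequality in which $\dim(V)$ never appears and which is not \eqref{1803e1.9}. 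The fix is to reverse the anisotropy: take $f_j$ spread out along $\pi_j(V)$ and concentrated transversally, e.g. $f_j(y)=\exp(-\varepsilon^{2}|P_{\pi_j(V)}y|^2-|y-P_{\pi_j(V)}y|^2)$, and lower bound $\Lambda$ on the slab $\{x:|P_Vx|\le\varepsilon^{-1},\ |P_{V^\perp}x|\le1\}$, whose volume is $\sim\varepsilon^{-\dim V}$; since $\|f_j\|_p\sim\varepsilon^{-\dim\pi_j(V)/p}$, letting $\varepsilon\to0$ now gives exactly $\dim(V)\le\frac1p\sum_j\dim\pi_j(V)=\frac{n}{d_0M}\sum_j\dim\pi_j(V)$. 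The scaling argument for $np=d_0M$ is fine as stated.
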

Let $$n_1(t,s)=(1,0,2t, s,0),$$
$$n_2(t,s)=(0,1,0,t,2s).$$
Now let us be more specific about $d_0, n$ and $M$.  Throughout this section, we will work with $n=5$, which comes from the fact that we are considering the surface $\mc{S}_{2,2}$ in $\R^5$. We will take $d_0=2$, since the tangent space $\langle n_1(t,s),n_2(t,s)\rangle$ to the  surface $\mc{S}_{2,2}$ has dimension two. The degree $M$ of multilinearity will be variable.

Under these choices of various parameters, condition \eqref{1803e1.9} becomes
\beq\label{1712e1.13}
dim(V)\le \frac{5}{2M}\sum_{j=1}^{M} dim(\pi_j(V)), \text{ for each subspace } V\subset \R^5.
\endeq

As explained in \cite{BD2},  Theorem \ref{abc14} has a whole host of consequences. It first leads to an appropriate multilinear restriction theorem. This in turn leads to the multi-scale inequality Proposition \ref{gjityiophjytophpotigirti0-we=fdcwee=w=}. We refer the reader to \cite{BD2} for details.
\medskip

We next recall the concept and relevant properties of transversality from \cite{BD2}.
Given a polynomial function $Q(t,s)$ of any degree $\deg(Q)$, denote by $\|Q\|$ the $l^1$ norm of its coefficients.
\begin{defi}
\label{dfek1}
A collection consisting of $m\ge 5$ sets $S_1,\ldots,S_{m}\subset [0,1]^2$ is said to be $\nu-$transverse (for $\mc{S}_{2,2}$) if the following  requirement is satisfied:
\medskip

For each $1\le i_1\not= i_2\ldots\not= i_{\left[\frac{m}{5}\right]+1}\le m$  we have
\begin{equation}
\label{fek4}
\inf_{\deg(Q)\le 2,\atop{\|Q\|=1}}\max_{1\le j\le \left[\frac{m}{5}\right]+1}\inf_{(t,s)\in S_{i_j}}|Q(t,s)|\ge \nu.
\end{equation}
\end{defi}
 Note that transverse sets are not necessarily pairwise disjoint. Requirement \eqref{fek4} says that $\left[\frac{m}{5}\right]+1$ points in  different sets $S_i$ do not come "close`` to belonging to the zero set of a (nontrivial) polynomial function $Q$ of degree $\le 2$. The following is Proposition 4.4 ($k=2$) from \cite{BD2}.

\begin{prop}
\label{pfek1}
Consider $m\ge 5$ points $(t_j,s_j)\in [0,1]^2$ such that the sets $S_j=\{(t_j,s_j)\}$ are $\nu-$transverse  for some $\nu>0$. Then the $m$ planes $V_j,\;1\le j\le m$  spanned by the vectors $n_1(t_j,s_j)$ and $n_2(t_j,s_j)$ in $\R^n$ satisfy requirement \eqref{1712e1.13}.
\end{prop}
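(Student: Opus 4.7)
I would start by rewriting \eqref{1712e1.13} via orthogonal complements. Setting $W=V^\perp$ and $s=\dim W=5-\dim V$, the elementary identity $\dim\pi_j(V)=\dim V_j-\dim(V_j\cap W)=2-\dim(V_j\cap W)$ converts \eqref{1712e1.13} into the equivalent family of inequalities
\begin{equation*}
\sum_{j=1}^m \dim(V_j\cap W)\le \frac{2ms}{5},
\end{equation*}
one for each subspace $W\subset\R^5$ of dimension $s\in\{0,1,2,3,4,5\}$. The cases $s=0$ and $s=5$ are immediate, so I would proceed by case analysis on $s\in\{1,2,3,4\}$.

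\textbf{Cases $s=1$ and $s=4$.} For $s=1$, write $W=\langle v\rangle$ with $v\neq 0$. The condition $v\in V_j$ is equivalent to the simultaneous vanishing of all $3\times 3$ minors of the matrix $[n_1(t_j,s_j)\;n_2(t_j,s_j)\;v]$; each such minor is a polynomial in $(t,s)$ of degree at most $2$. A short coordinate inspection of $v$ exhibits at least one such minor as a \emph{nonzero} polynomial, so the $\nu$-transversality hypothesis \eqref{fek4} bounds $\#\{j:v\in V_j\}\le [m/5]$, well within $2m/5$. The case $s=4$ is dual: writing $W=\ker L$ as a hyperplane, the condition $V_j\subset W$ reduces to the simultaneous vanishing of the two linear polynomials $L(n_1(t,s))$ and $L(n_2(t,s))$, at least one of which is nonzero by inspection of coefficients, and transversality supplies the required bound.

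\textbf{Cases $s=2,3$ and the main obstacle.} Here $\dim(V_j\cap W)\in\{0,1,2\}$, and I split
\begin{equation*}
\sum_j\dim(V_j\cap W)=\#\{j:V_j\cap W\neq\{0\}\}+\#\{j:\dim(V_j\cap W)\ge 2\}.
\end{equation*}
Both counts are governed by the vanishing of suitable minors of the $5\times(2+s)$ augmented matrix $[n_1(t,s)\;n_2(t,s)\;w_1\;\cdots\;w_s]$, with $w_1,\ldots,w_s$ a basis of $W$; these minors are again polynomials of degree $\le 2$ in $(t,s)$. The main obstacle, and the step that deserves the most care, is to verify that at least one of the relevant minors is \emph{not identically zero} as a polynomial in $(t,s)$, uniformly over admissible $W$; equivalently, that no fixed 2- or 3-dimensional plane $W$ satisfies $V(t,s)\cap W\neq\{0\}$ for every $(t,s)\in[0,1]^2$, where $V(t,s)=\langle n_1(t,s),n_2(t,s)\rangle$. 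I would establish this by evaluating the tangent-plane configuration at a handful of well-chosen points (for instance $(0,0),(1,0),(0,1),(1,1)$) and extracting mutually incompatible linear constraints on the coefficients of any hypothetical bad $W$, thus forcing its degeneracy. Once nontriviality is in hand, $\nu$-transversality bounds each of the two counts in the decomposition by $[m/5]$, whence $\sum_j\dim(V_j\cap W)\le 2[m/5]\le 2m/5$, comfortably within both $4m/5$ (the $s=2$ threshold) and $6m/5$ (the $s=3$ threshold).
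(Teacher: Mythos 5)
Your reduction to the orthogonal complement $W=V^\perp$ (turning \eqref{1712e1.13} into $\sum_j\dim(V_j\cap W)\le \frac{2ms}{5}$, $s=\dim W$) is correct, and the cases $s=1$ and $s=4$ go through as you say. The genuine problem is the claim you rest the cases $s=2,3$ on, namely that \emph{no} fixed $2$- or $3$-dimensional plane $W$ satisfies $V(t,s)\cap W\neq\{0\}$ for every $(t,s)$. For $s=3$ this is false: take $W=\{x\in\R^5:\ x_2=x_5=0\}$. Since $n_1(t,s)=(1,0,2t,s,0)$ has vanishing second and fifth coordinates, $n_1(t,s)\in W$ for \emph{every} $(t,s)$, so $V(t,s)\cap W\supseteq\langle n_1(t,s)\rangle\neq\{0\}$ identically (symmetrically, $\{x_1=x_3=0\}$ contains every $n_2(t,s)$). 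For such $W$ all the minors you would feed into \eqref{fek4} to control $\#\{j:V_j\cap W\neq\{0\}\}$ vanish identically, transversality says nothing about that count (it equals $m$), and indeed your asserted bound $\sum_j\dim(V_j\cap W)\le 2[m/5]\le \frac{2m}{5}$ is itself false for this $W$: the sum is exactly $m$. The proposition survives only because the $s=3$ threshold is $\frac{6m}{5}\ge m$; the repair is to use the decomposition $\sum_j\dim(V_j\cap W)\le m+\#\{j:V_j\subset W\}$ and to show instead that no $3$-plane can \emph{contain} $V(t,s)$ for all $(t,s)$ (e.g.\ $V(0,0)+V(1,0)$ already has dimension $4$), so that transversality bounds the second count by $[m/5]$ and gives $m+\frac{m}{5}=\frac{6m}{5}$. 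The correct generic statement, dual to the $l=1$ case of Conjecture \ref{conjecture0}, is $\dim(V(t,s)\cap W)\le 1-[\tfrac{2(5-s)}{5}]$ for generic $(t,s)$, which for $s=3$ permits a one-dimensional intersection --- exactly the phenomenon your argument overlooks.

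A secondary gap: for $s=2$ the nonvanishing you need (no $2$-plane meets every tangent plane nontrivially) is true, but it is precisely the nontrivial content here and you only sketch it; it is not clear that evaluating at four sample points suffices. It does follow from a short computation: write $W^\perp=\langle L_1,L_2,L_3\rangle$ and require the two columns $\big(L_i(n_1(t,s))\big)_i$ and $\big(L_i(n_2(t,s))\big)_i$ of the resulting $3\times 2$ matrix to be parallel for all $(t,s)$; the coefficients of $t^2$, $ts$, $s^2$ and of the constant term in their cross product force the vectors $(\ell_{i3})_i,(\ell_{i4})_i,(\ell_{i5})_i$ to span at most a line and $(\ell_{i1})_i,(\ell_{i2})_i$ to span at most a line, contradicting the independence of $L_1,L_2,L_3$. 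For comparison, the paper does not reprove this statement but quotes Proposition 4.4 of \cite{BD2}; that argument (mirrored in Lemma \ref{0711lemma1.3} here) works directly with the rank of $\mc{M}^{(1)}_V(t_j,s_j)$: a minor of order $[\tfrac{2\dim V}{5}]+1$ is a nonzero polynomial of degree at most $2$, so at least $m-[m/5]$ of the transverse points see that rank, which sums to the required bound; in that formulation the quantitative order of the guaranteed minor automatically encodes the weaker (and correct) nondegeneracy that your uniform claim overstates.
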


A $K$-square is defined to be a closed square of side length $1/K$ inside $[0, 1]^2$. We may work with $K$ among integer powers of $2$. The collection of all dyadic $K$-squares will be denoted by $Col_K$. A standard compactness argument leads to the following nice criterium for transversality of squares in $Col_K$.

\begin{lem}\label{0604lemma2.4}
Let $R_1, ..., R_{m}\subset [0, 1]^2$ be $m\ge 5$ squares in $Col_K$ such that given any polynomial $Q(t,s)$ of degree one or two,  at most $\left[\frac{m}{5}\right]$ of them intersect the $\frac{10}{K}$ neighborhood of the zero set of $Q$.
Then there exists $\nu_K$ depending only on $K$ such that $R_1, ..., R_{m}$ are $\nu_K$-transverse.
\end{lem}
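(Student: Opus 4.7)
The approach is a standard compactness argument exploiting two key finiteness features: for fixed $K$, the collection $Col_K$ is finite, and the set $\Sigma=\{Q:\deg Q\le 2,\ \|Q\|=1\}$ of unit $\ell^1$-norm polynomials of degree at most two in two variables is a compact subset of its six-dimensional coefficient space. I would fix one collection $R_1,\dots,R_m$ satisfying the hypothesis, fix a choice of $[m/5]+1$ distinct indices $i_1,\dots,i_{[m/5]+1}$, and introduce the function
$$F(Q)=\max_{1\le j\le [m/5]+1}\inf_{(t,s)\in R_{i_j}}|Q(t,s)|$$
on $\Sigma$. Since the evaluation map $Q\mapsto Q(t,s)$ is linear and each $R_{i_j}$ is compact, every inner infimum depends continuously on $Q$, hence $F$ is continuous on $\Sigma$.

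The key claim is that $F$ is strictly positive on all of $\Sigma$. Suppose to the contrary that $F(Q)=0$ for some $Q\in\Sigma$. Then for every $j$ the infimum of $|Q|$ over the compact square $R_{i_j}$ is attained and equals zero, giving a point of $R_{i_j}$ on the zero set $Z(Q)$. Since each square in $Col_K$ has diameter $\sqrt 2/K<10/K$, each such $R_{i_j}$ consequently lies entirely inside the $10/K$-neighborhood of $Z(Q)$. This would exhibit $[m/5]+1$ squares from $R_1,\dots,R_m$ intersecting that neighborhood, contradicting the hypothesis of the lemma applied to this particular $Q$. (Note: the hypothesis uses the $10/K$-neighborhood rather than just $Z(Q)$, which is stronger than what is needed here, but presumably convenient elsewhere in the paper.)

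Being continuous and strictly positive on the compact set $\Sigma$, $F$ attains a positive minimum, call it $\nu(R_1,\dots,R_m;i_1,\dots,i_{[m/5]+1})>0$. The desired constant $\nu_K$ is then the minimum of these values over the finitely many admissible subcollections $R_1,\dots,R_m\subset Col_K$ (note $m\le|Col_K|=K^2$) and the finitely many $([m/5]+1)$-subsets of $\{1,\dots,m\}$. By construction $\nu_K>0$ depends only on $K$, and the resulting bound is exactly requirement \eqref{fek4}. I expect no serious obstacle beyond carefully verifying the continuity of the inner infimum in $Q$ and checking that the minimum is taken over a genuinely finite index set so that it stays strictly positive.
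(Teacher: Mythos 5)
Your proof is correct and is exactly the ``standard compactness argument'' the paper alludes to (and which it makes explicit in the analogous Lemma \ref{0711cor1.4}): positivity of the continuous function $Q\mapsto\max_j\inf_{R_{i_j}}|Q|$ on the compact unit $\ell^1$-sphere of coefficients, followed by a minimum over the finitely many subcollections of $Col_K$. The only detail left implicit — that a $Q\in\Sigma$ with $F(Q)=0$ cannot be a nonzero constant, so the degree-one-or-two hypothesis indeed applies — is immediate.
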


\subsection{Multilinear and linear decouplings}\label{section3}
In this section, we run the Bourgain--Guth argument from \cite{BG} to show that the linear decoupling constant $V(N, p,q)$ is equivalent to a certain multilinear decoupling constant.  Let $V_{multi}(N, p, q,  \nu)$ be the smallest constant such that the inequality
\beq
\|(\prod_{i=1}^{M}E_{R_i}g_i)^{\frac{1}{M}}\|_{L^p(\w)}\le V_{multi}(N, p, q,  \nu) (\prod_{i=1}^{M}\sum_{\substack{\Delta\subset R_i\\l(\Delta)=N^{-1/2}}}\|E_{\Delta}g_i\|_{L^p(\w)}^q)^{\frac{1}{Mq}}
\endeq
holds for each $M$ squares $R_j$  that are $\nu$-transverse for some $\nu>0$.
By H\"older's inequality, one can see immediately that for each $p,q$
\beq
\label{abc4}
V_{multi}(N, p, q, \nu)\le V(N, p,q).
\endeq
We will prove that the reverse direction of the above inequality is also essentially true in a certain  range for $p,q$, quantified as follows.
\begin{thm}\label{2903thm3.1}
For $\;p\ge 6$ and $q\ge 2$ with $p\ge q$, $\epsilon>0$ and $K\ge 1$, there exist constants $\beta(K, p, q, \epsilon)$ and $\Lambda_{K, p, q, \epsilon}$ with
\beq
\lim_{K\to \infty}\beta(K, p, q, \epsilon)=0
\endeq
such that for each $N\ge K\ge 1$,
\beq\label{0705e1.27}
\begin{split}
& V(N, p, q) \le N^{\beta(K, p, q, \epsilon)+\frac{1}{2}-\frac1{2q}-\frac{3}{2p}+\epsilon}\\
&+ \Lambda_{K, p, q, \epsilon} \log_K N \max_{1\le M\le N}\big[ (\frac{M}{N})^{-\frac{1}{2}+\frac1{2q}+\frac{3}{2p}+\epsilon} V_{multi}(M, p, q, \nu_K) \big].
\end{split}
\endeq
Here $\nu_K$ is the transversality constant depending on $K$, coming from Lemma \ref{0604lemma2.4}.
\end{thm}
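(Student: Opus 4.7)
The plan is to run a Bourgain--Guth decomposition at scale $K^{-1}$ and iterate the resulting one-step recursion $O(\log_K N)$ times. Partition $[0,1]^2$ into $K$-squares $\Delta \in Col_K$ and cover $B_N$ by balls $B = B_{K^2}$ of radius $K^2$. Since $K^2$ is the dual scale of $K^{-1}$ on $\mc{S}_{2,2}$, each $|E_\Delta g|$ is essentially constant on every such $B$, so it suffices to produce a pointwise majorant for $|E_{[0,1]^2}g(x)|$ on $B$. Setting $\alpha_B := \max_\Delta \|E_\Delta g\|_{L^\infty_\sharp(w_B)}$ and calling $\Delta$ \emph{significant} if $\|E_\Delta g\|_{L^\infty_\sharp(w_B)} \ge K^{-100}\alpha_B$, Lemma \ref{0604lemma2.4} yields the dichotomy: either (a) there exist $M \ge 5$ significant squares $R_1, \ldots, R_M$ that are $\nu_K$-transverse, with $|E_{[0,1]^2}g(x)| \le K^{O(1)}(\prod_i |E_{R_i}g(x)|)^{1/M}$ on $B$; or (b) all significant squares lie inside $\mc{N}_Q := \mc{N}_{10/K}(Z(Q))$ for some nonzero polynomial $Q$ with $\deg Q \le 2$, and $|E_{[0,1]^2}g(x)| \le K^{O(1)}|E_{\mc{N}_Q}g(x)|$ on $B$.

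In the multilinear case (a), the $M$-tuple $(R_i)$ can be fixed at the cost of a $K^{O(1)}$ factor (there are $\lesssim K^{2M}$ tuples with $M$ bounded). Integrating over $B$, applying the definition of $V_{multi}(K^2, p, q, \nu_K)$, summing over $B \subset B_N$ via Minkowski's inequality (which uses $p \ge q$), and invoking parabolic rescaling (Lemma \ref{abc18}) to decouple each $K$-square down to scale $N^{-1/2}$, one obtains a multilinear contribution bounded by
\[
K^{O(1)}\, V_{multi}(K^2, p, q, \nu_K)\, V(N/K^2, p, q)\, \Bigl(\sum_{l(\Delta') = N^{-1/2}} \|E_{\Delta'}g\|_{L^p(w_{B_N})}^q\Bigr)^{1/q}.
\]

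The low-dimensional case (b) is the heart of the argument and the main novelty. I parametrize $Z(Q) \cap [0,1]^2$ as a graph $s = \phi(t)$ with $\phi$ a polynomial of degree at most $2$ (after subdividing the conic into $O(1)$ pieces and possibly interchanging $t$ and $s$), so that $\mc{N}_Q$ becomes a $10/K$-thick strip around this graph. The dimension-reduction tools of Section \ref{abcsec2} (Lemmas \ref{abc8}, \ref{abc9}, and Corollary \ref{abc9'}) reduce the decoupling of $\mc{N}_Q$ (viewed inside $\mc{S}_{2,2}$) to a one-dimensional $l^q L^p$ decoupling for an auxiliary polynomial curve in $\R^5$ whose first pair of coordinates is $(r, r^2)$, i.e., the parabola. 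The latter is controlled by the parabola decoupling from \cite{BD1}, most directly through the sharp $l^2 L^6$ inequality combined with H\"older-type manipulations (exploiting $q \ge 2$ and $p \ge 6$) to cover the general $l^q L^p$ range. Combined with parabolic rescaling within each $K$-square of $\mc{N}_Q$, the low-dim contribution is bounded by
\[
K^{\frac{1}{2} - \frac{1}{2q} - \frac{3}{2p} + \epsilon}\, V(N/K^2, p, q)\, \Bigl(\sum_{l(\Delta') = N^{-1/2}} \|E_{\Delta'}g\|_{L^p(w_{B_N})}^q\Bigr)^{1/q}.
\]

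Combining (a) and (b) yields the one-step recursion
\[
V(N, p, q) \le K^{O(1)} V_{multi}(K^2, p, q, \nu_K)\, V(N/K^2, p, q) + K^{\frac{1}{2} - \frac{1}{2q} - \frac{3}{2p} + \epsilon}\, V(N/K^2, p, q).
\]
Iterating $O(\log_K N)$ times, the pure low-dimensional branch (never switching to multilinear) contributes the first term $N^{\beta + \frac{1}{2} - \frac{1}{2q} - \frac{3}{2p} + \epsilon}$, with $\beta$ absorbing the per-step $\epsilon$ losses and $K^{O(1)}$ factors and satisfying $\beta \to 0$ as $K \to \infty$; each mixed branch, switching to multilinear at some intermediate scale $M \in [1, N]$, contributes $(M/N)^{-\frac{1}{2} + \frac{1}{2q} + \frac{3}{2p} + \epsilon}\, V_{multi}(M, p, q, \nu_K)$ up to $K^{O(1)}$, and the $O(\log_K N)$ possible transition scales are absorbed into the leading $\log_K N$ factor and the maximum over $M$. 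The principal technical obstacle is extracting the precise exponent $K^{\frac{1}{2} - \frac{1}{2q} - \frac{3}{2p} + \epsilon}$ in case (b): the trivial Minkowski decoupling used in \cite{BD2} gives only $K^{1 - 1/q + \epsilon}$, which is too weak in our regime, so the combined use of the dimension-reduction lemmas and sharp parabola decoupling is essential; the remaining ingredients (the multilinear estimate, parabolic rescaling, and the iteration bookkeeping) are standard.
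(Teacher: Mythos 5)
Your treatment of the lower-dimensional term is in the right spirit (it is essentially the paper's Claim \ref{2903claim3.3}: dimension reduction via Lemmas \ref{abc8}--\ref{abc9} and Corollary \ref{abc9'} plus the $l^2$ parabola decoupling of \cite{BD1} upgraded by H\"older), but the multilinear branch of your recursion has a genuine structural gap. You localize the multilinear estimate to balls of radius $K^2$ and write the transverse contribution as $K^{O(1)}V_{multi}(K^2,p,q,\nu_K)\,V(N/K^2,p,q)$. First, $V_{multi}(K^2,\cdot)$ decouples $\nu_K$-transverse squares of side $K^{-1}$ into squares of side $(K^2)^{-1/2}=K^{-1}$, so it is vacuous; more importantly, your one-step inequality only ever produces the multilinear constant at the fixed tiny scale $K^2$, always multiplied by a linear constant $V(N/K^2)$. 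Iterating such a recursion just multiplies constants and cannot yield the statement \eqref{0705e1.27}, whose whole content is that $V(N,p,q)$ is controlled by $V_{multi}(M,p,q,\nu_K)$ at scales $M$ that may be as large as $N$, with only the power loss $(N/M)^{\frac12-\frac1{2q}-\frac{3}{2p}+\epsilon}$ and \emph{no} linear constant attached. Your final claim that "each mixed branch contributes $(M/N)^{-\frac12+\frac1{2q}+\frac{3}{2p}+\epsilon}V_{multi}(M)$" does not follow from the recursion you actually state. The correct bookkeeping (Proposition \ref{2903p3.2}, third term) keeps the full ball: in the transverse case one integrates the pointwise geometric-mean bound over the ball of radius equal to the \emph{current} scale and invokes the definition of $V_{multi}$ at that scale, decoupling directly down to the finest squares and terminating that branch; only the trivial and lower-dimensional terms are iterated via parabolic rescaling.

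Two further points. Your dichotomy (a)/(b) is stronger than what Lemma \ref{0604lemma2.4} gives: failure of transversality among $M$ significant squares only forces at least $[\frac M5]+1$ of them (not all) into the $\frac{10}{K}$-neighborhood of the zero set of a single polynomial of degree $\le 2$. This is why the paper runs the removal algorithm on $STOCK$, losing a fifth of the squares per round and accumulating $O(\log K)$ lower-dimensional terms (absorbed into $K^{\epsilon}$); as stated, your case (b) is unjustified. Finally, with your radius-$K^2$ balls the parabola decoupling costs $(K^2)^{\frac12(1-\frac1q-\frac3p)+\epsilon}=K^{1-\frac1q-\frac3p+2\epsilon}$ per step of scale reduction $K^2$, not $K^{\frac12-\frac1{2q}-\frac{3}{2p}+\epsilon}$ as you wrote (the paper works on balls of radius $K$ and decouples the strip into squares of side $K^{-1/2}$, which is where that exponent comes from); this inconsistency washes out in the final exponent but should be fixed, whereas the multilinear bookkeeping must be redone along the lines above.
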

When $p=q$, this is Theorem 8.1 from \cite{BD2}. The proof for the other values of $p,q$ will be very similar, following the Bourgain--Guth original insight from \cite{BG}. One needs to deal with a lower dimensional contribution and with a multilinear transverse term. The only key difference in our argument here, compared with the one in Theorem 8.1 from \cite{BD2}, is in the way we estimate the lower dimensional term. A trivial decoupling sufficed in \cite{BD2}, while here we need to invoke the more sophisticated decoupling for the parabola. See Theorem \ref{abc2} and Claim \ref{2903claim3.3} below.

The equivalence between $V(N, p,q)$ and $V_{multi}(N, p, q,\nu_K)$ in the estimate \eqref{0705e1.27} can be interpreted in the following way. Let us focus for simplicity on the range $p=8$, $q> \frac83$ which is relevant for Theorem \ref{0626theorem1.3}. Note that in this range we have
$$\lambda_{1,q}:=\frac{1}{2}-\frac1{2q}-\frac{3}{2\cdot 8}=\frac5{16}-\frac1{2q}< \frac12-\frac1q:=\lambda_{2,q}.$$

As remarked after Theorem \ref{0626theorem1.3}, we have the lower bound
$$V(N,8,q)\gtrsim N^{\lambda_{2,q}}.$$
Combining these two estimates, the inequality in Theorem \ref{2903thm3.1} can be simplified (for $K$ large enough, so that $\beta$ is small enough) as follows
\begin{equation}
\label{abc5}
V(N, 8, q) \le \Lambda_{K, 8, q, \epsilon} \log_K N \max_{1\le M\le N}\big[ (\frac{N}{M})^{\lambda_{1,q}+\epsilon} V_{multi}(M, 8, q, \nu_K)\big].
\end{equation}
Write $V_{multi}(N, 8, q, \nu_K)\sim N^{\lambda_{3,q}}$. It can not be that $\lambda_{3,q}\le \lambda_{1,q}$, as \eqref{abc5} would then lead to the contradiction
$$V(N,8,q)\lesssim_\epsilon N^{\lambda_{1,q}+\epsilon}.$$
So it must be that $\lambda_{3,q}> \lambda_{1,q}$ in which case \eqref{abc5} implies that
$$V(N, 8, q) \le \Lambda_{K, 8, q, \epsilon} (\log_K N)V_{multi}(M, 8, q, \nu_K).$$
Ignoring the logarithmic term, we may view this inequality as a reverse inequality for \eqref{abc4}.

\bigskip

Before starting the proof of Theorem \ref{2903thm3.1}, we state an auxiliary result. Let $V^{(1,2)}(N,p,q)$ denote the decoupling constants associated with the parabola $\mc{S}_{1,2}$. More precisely, $V^{(1,2)}(N,p,q)$ is the smallest constant such that the following inequality  holds true
$$
\|E_{[0, 1]}^{(1,2)}g\|_{L^p(w_{B_N})}\le V^{(1,2)}(N, p, q) (\sum_{\substack{I\subset [0, 1]\\ l(I)=N^{-1/2}}}\|E_{I}^{(1,2)}g\|_{L^p(w_{B_N})}^q)^{1/q},
$$
for all $g:[0,1]\to\C$ and all balls $B_N$ in $\R^2$ with radius $N$.
\begin{thm}
\label{abc2}
For each $p\ge 6$ and each $q\ge 2$ we have
$$V^{(1,2)}(N, p, q)\lesssim_\epsilon N^{\frac12(1-\frac1q-\frac3p)+\epsilon}.$$
\end{thm}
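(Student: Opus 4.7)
\textbf{Proof plan for Theorem \ref{abc2}.} The plan is a direct two-step reduction, with no real obstacle beyond correctly bookkeeping the exponents. The first step is to observe that since the intervals $I$ have length $\delta:=N^{-1/2}$ and the ball $B_N$ has radius $N=\delta^{-2}$, our setup is exactly the geometric one of Bourgain--Demeter's $l^2L^p$ decoupling theorem for the parabola from \cite{BD1}. For $p\ge 6$ that theorem gives the sharp bound
\[
V^{(1,2)}(N,p,2)\;\lesssim_\epsilon\;\delta^{-s(p)-\epsilon}\;=\;N^{s(p)/2+\epsilon}\;=\;N^{\frac14-\frac{3}{2p}+\epsilon},
\]
where $s(p)=\frac{n-1}{2}-\frac{n+1}{p}$ is the sharp $l^2$ decoupling exponent for the paraboloid $P^{n-1}$ specialized to $n=2$, i.e.\ $s(p)=\frac12-\frac3p$; note that this exponent vanishes at the critical index $p=6$.

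The second step converts the $l^2$ sum on the right into the desired $l^q$ sum via H\"older's inequality. The cover of $[0,1]$ at scale $N^{-1/2}$ consists of $M:=\#\{I\}\sim N^{1/2}$ intervals, so for any $q\ge 2$ and any nonnegative sequence $(a_I)$,
\[
\Big(\sum_I a_I^2\Big)^{1/2}\;\le\;M^{\frac12-\frac1q}\Big(\sum_I a_I^q\Big)^{1/q}\;=\;N^{\frac14-\frac1{2q}}\Big(\sum_I a_I^q\Big)^{1/q}.
\]
Applying this to $a_I=\|E^{(1,2)}_I g\|_{L^p(w_{B_N})}$ and inserting the estimate from the first step yields
\[
V^{(1,2)}(N,p,q)\;\le\;N^{\frac14-\frac1{2q}}\cdot V^{(1,2)}(N,p,2)\;\lesssim_\epsilon\;N^{\frac12-\frac1{2q}-\frac{3}{2p}+\epsilon}\;=\;N^{\frac12(1-\frac1q-\frac3p)+\epsilon},
\]
which is the desired inequality.

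The argument essentially repackages the sharp $l^2L^p$ decoupling via a single application of H\"older, and the exponents mesh cleanly because both $N^{\frac14-\frac3{2p}}$ and the H\"older factor $N^{\frac14-\frac1{2q}}$ depend linearly on $\frac1p$ and $\frac1q$ respectively, producing an expression jointly linear in these variables that matches the target exponent exactly. The only reason sharpness is preserved here, rather than lost to H\"older, is that the target exponent is itself affine in $\frac1q$: at $q=2$ the H\"older factor is $1$ and we recover the sharp $l^2L^p$ bound, while for larger $q$ the factor $N^{\frac14-\frac1{2q}}$ accrues at exactly the rate dictated by the target. The restriction $p\ge 6$ is inherited from the underlying $l^2$ theorem, since for $2\le p\le 6$ the sharp $l^2$ exponent is $0$ and the method would need to be revisited.
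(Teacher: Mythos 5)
Your proposal is correct and follows essentially the same route as the paper: the case $q=2$ is the sharp $l^2L^p$ parabola decoupling from \cite{BD1} at scale $N^{-1/2}$ on balls of radius $N$, and the general case $q\ge 2$ follows by a single application of H\"older's inequality over the $\sim N^{1/2}$ intervals, with the exponents combining exactly as you computed.
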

\begin{proof}
The estimate when $q=2$ was proved in \cite{BD1}. The estimate for $q\ge 2$ follows from this and H\"older's inequality.
\end{proof}

Theorem \ref{2903thm3.1} will be obtained by iterating the following inequality.

\begin{prop}\label{2903p3.2}
For $p\ge 6$ and $q\ge 2$ with $p\ge q$,  for each $\epsilon>0$ and $N\ge K\ge 1$, we have
\beq
\begin{split}
\label{abc3}
& \|E_{[0, 1]^2} g\|_{L^p(\w)}\lesim_{\epsilon, p}  (\sum_{R\in Col_K}\|E_{R}g\|_{L^p(w_{B_N})}^q)^{1/q}\\
& +K^{\frac12(1-\frac{1}{q}-\frac{3}{p})+\epsilon} (\sum_{\beta\in Col_{K^{1/2}}}\|E_{\beta}g\|_{L^p(w_{B_N})}^q)^{1/q}\\
&+ K^{100} V_{multi}(N, p, q, \nu_K) (\sum_{\Delta\in Col_{N^{1/2}}}\|E_{\Delta}g\|_{L^p(w_{B_N})}^q)^{1/q}
\end{split}
\endeq
\end{prop}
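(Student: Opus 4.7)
The plan is to follow the single-scale Bourgain--Guth dichotomy at scale $K$, in the style of \cite{BG,BD2,BDG}. For each $x\in B_N$, one orders the squares $R\in Col_K$ by the size of $|E_Rg(x)|$ and pigeonholes to extract a collection $\mc{S}(x)\subset Col_K$ of $K$-squares whose values are all comparable (up to a power of $K$) to the maximum. Two scenarios then arise: either $\mc{S}(x)$ contains five squares that are $\nu_K$-transverse in the sense of Definition \ref{dfek1}, or else, by Lemma \ref{0604lemma2.4}, all squares in $\mc{S}(x)$ lie within the $10/K$-neighborhood of the zero set $Z(Q)$ of some polynomial $Q$ of degree at most $2$. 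In the trivial residual case when $|\mc{S}(x)|$ is bounded by an absolute constant, the bound $|E_{[0,1]^2}g(x)|\lesssim\max_R|E_Rg(x)|$ combined with $\ell^\infty\le \ell^q$ is absorbed into the first term of \eqref{abc3}.

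In the transverse case one bounds $|E_{[0,1]^2}g(x)|$ pointwise by $K^{O(1)}$ times a multilinear expression of the form $(\prod_{i=1}^M|E_{R_i}g(x)|)^{1/M}$ with $R_1,\ldots,R_M$ forming a $\nu_K$-transverse tuple; integrating over $B_N$ and invoking the definition of $V_{multi}(N,p,q,\nu_K)$ then produces the third term of \eqref{abc3}, with the harmless loss $K^{100}$ accounting for the pigeonholing and for the number of $M$-tuples of transverse squares. This step is essentially identical to the corresponding step in \cite{BD2,BDG}.

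The non-transverse case is the main new point, and the one where I expect the real work. Here the significant $K$-squares are contained in the $10/K$-neighborhood of a conic $Z(Q)$, which can be covered by $O(K^{1/2})$ squares $\beta\in Col_{K^{1/2}}$. On each such $\beta$, after a local affine change of variables the piece of $Z(Q)$ lying in $\beta$ looks like a short arc of a parabola (or of a line), so the contribution $\sum_\beta E_\beta g$ to $\|E_{[0,1]^2}g\|_{L^p(w_{B_N})}$ should be controlled by applying a one-dimensional $\ell^q L^p$ decoupling for the parabola $\mc{S}_{1,2}$ (Theorem \ref{abc2}) at scale $K^{-1/2}$. To transfer this one-dimensional decoupling to the two-dimensional extension operator on $\mc{S}_{2,2}$, I would invoke Corollary \ref{abc9'} with $\mc{M}_3=\{(r,r^2):r\in\R\}$; this converts $V^{(1,2)}(K,p,q)\lesssim_\epsilon K^{\frac12(1-1/q-3/p)+\epsilon}$ into precisely the prefactor appearing in front of the second term of \eqref{abc3}. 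This step replaces the trivial decoupling used in \cite{BD2} and is where the assumption $p\ge 6$ enters, via the hypothesis of Theorem \ref{abc2}.

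Assembling the three cases gives \eqref{abc3}. The main obstacle, where the genuinely new analysis lives, is the parabolic reduction in the non-transverse regime: one must check that the conic $Z(Q)$ is straight enough on scale $K^{-1/2}$ that Corollary \ref{abc9'} applies cleanly, and that the $O(K^{1/2})$ covering by $\beta$'s together with the parabola decoupling constant combine to give exactly the claimed exponent $\frac12(1-\frac1q-\frac3p)+\epsilon$ with no hidden $K^{1/q}$-type losses. The remainder of the argument is bookkeeping adapted from the pigeonhole scheme of \cite{BG,BD2}.
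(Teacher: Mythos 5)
Your proposal follows essentially the same route as the paper: a Bourgain--Guth decomposition at scale $K$, a transverse term estimated via $V_{multi}$, and --- the genuinely new point --- the non-transverse contribution handled by the $\ell^qL^p$ parabola decoupling of Theorem \ref{abc2} transported to $\mc{S}_{2,2}$ through the cylindrical Corollary \ref{abc9'}, the whole thing then summed over balls $B_K\subset B_N$ by Minkowski (where $p\ge q$ enters). There are, however, two points where your write-up does not quite work as stated.

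First, the dichotomy is too strong. Failure of the hypothesis of Lemma \ref{0604lemma2.4} does \emph{not} place all significant $K$-squares in the $10/K$-neighborhood of a single zero set $Z(Q)$; it only yields that at least a fifth of them (namely $[m/5]+1$) cluster near some such $Z(Q)$. For instance the significant squares could split between two conics, in which case there is neither a transverse sub-tuple nor a single conic neighborhood containing them all, and your trichotomy misses this configuration. The paper repairs this with an iterative algorithm on each ball $B_K$: remove the $K^{-1/2}$-squares containing the detected cluster (this produces one lower-dimensional term), and repeat; since the stock of significant squares shrinks by at least a fifth at each step, only $O(\log K)$ lower-dimensional terms arise, and the $\log K$ is absorbed into $K^{\epsilon}$. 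Relatedly, by Definition \ref{dfek1} a $5$-tuple of squares is essentially never $\nu$-transverse (any two squares lie near a common line), so the transverse case must be run with the whole, possibly large, stock of significant squares; this is why $V_{multi}$ is set up with variable degree of multilinearity $M$. Second, a clarification rather than a gap: no affine change of variables and no curvature of $Z(Q)$ is used or needed. One only needs that $Z(Q)$ is a union of $O(1)$ bounded-slope graphs, so that the squares $\beta$ occupy distinct strips $I\times[0,1]$ with $|I|=K^{-1/2}$; the decoupling into such strips on a ball of radius $K$ then comes from the parabola $(r,r^2)$ sitting inside $\mc{S}_{2,2}$, i.e.\ Theorem \ref{abc2} with $N=K$ combined with Corollary \ref{abc9'} with $Q_1(r)=r^2$ --- which is indeed what you cite --- and this gives the exponent $\frac12(1-\frac1q-\frac3p)$ as a true $\ell^q$ decoupling, with no cardinality loss in the number of $\beta$'s.
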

\begin{proof}[Proof of Proposition \ref{2903p3.2}]
We split $[0, 1]^2$ into squares of side length $K^{-1}$, and write
\beq
E_{[0, 1]^2}g=\sum_{R\in Col_K} E_{R}g.
\endeq
By the uncertainty principle, on each ball $B_K$ of radius $K$, the function $|E_R g|$ is essentially a constant. We use $|E_R g(B_K)|$ to denote this constant, and we write
$|E_R g(x)|\approx |E_R g(B_K)|$ for $x\in B_K$. We now fix $B_K$ for a while. Denote by $R^*=R^*(B_K)$ the square that maximises $|E_{R}g(B_K)|$. Let $Col_K^*$ be those squares  $R\in Col_K$ such that
\beq\label{0705e1.31}
|E_R g(B_K)|\ge K^{-2} |E_{R^*}g(B_K)|.
\endeq
Initialise
$$
STOCK=Col^*_K
$$
We repeat the following algorithm.
\medskip

Case 1: If $|STOCK|\le 4$, then the algorithm stops, after performing the following computations.
We can write on each $x\in B_K$
$$|E_{[0, 1]^2} g(x)|=|\sum_{R\in Col_K} E_{R}g(x)|$$$$\le \sum_{R\not\in Col^*_K}|E_Rg(x)|+|\sum_{R\in Col^*_K} E_{R}g(x)|$$
$$\le 5\max_R|E_{R}g(B_K)|.$$
Integrating this on $B_K$   leads to
$$\|E_{[0, 1]^2} g\|_{L^p(w_{B_K})}\lesim (\sum_{R\in Col_K}\|E_{R}g\|_{L^p(w_{B_K})}^q)^{1/q}.$$
Then raise to the power $p$, sum over a finitely overlapping cover of $B_N$ using balls $B_K$ and invoke Minkowski's inequality (using that $p\ge q$) to recover the desired \eqref{abc3}. In fact, note that only the first  term on the right hand side of \eqref{abc3} is needed in this case.
\medskip

Case 2: If $M:=|STOCK|\ge 5$ and if   given any polynomial $Q(t,s)$ of degree one or two,  at most $\left[\frac{M}{5}\right]$ of the squares in $STOCK$ intersect the $\frac{10}{K}$ neighborhood of the zero set of $Q$, then the algorithm stops, after performing the following computations.  Note first that in this case Lemma \ref{0604lemma2.4} guarantees that the squares in $STOCK$ are $\nu_K-$transverse.
Thus, by \eqref{0705e1.31} and the triangle inequality, we have for $x\in B_K$
$$
|E_{[0, 1]^2}g(x)| \le K^2\max|E_{R}g(B_K)|\le K^{4} \Big(\prod_{i=1}^{M}|E_{R_i}g(B_K)| \Big)^{1/M}.
$$
Integrating on $B_K$, then raising to the power $p$ and summing over $B_K$ as before leads to the inequality \eqref{abc3}. Note that only the third term is needed this time.

\medskip

\medskip

Case 3: Assume $M:=|STOCK|\ge 5$ and that there is a polynomial $Q(t,s)$ of degree one or two, and a subset $\mc{G}\subset STOCK$ with at least $\left[\frac{M}{5}\right]+1$  squares, each of which  intersects the $\frac{10}{K}$ neighborhood of the zero set of $Q$.
We denote by $\mc{G}_{K^{\frac{1}{2}}}$ the collection of the squares $\beta$ from $Col_{K^{\frac{1}{2}}}$ which contain at least one element from $\mc{G}$. Note that each square in $\mc{G}_{K^{\frac{1}{2}}}$ will be inside the $10K^{-\frac{1}{2}}$ neighbourhood of the zero set of  $Q$. We write
\beq
\label{abcd1}
|E_{[0,1]^2}g|\le |\sum_{\beta\in \mc{G}_{K^{\frac{1}{2}}}} E_{\beta}g|+|\sum_{\beta\notin\mc{G}_{K^{\frac{1}{2}}}} E_{\beta}g|.
\endeq
We reset the value
$$STOCK:=STOCK\setminus\{R:\;R\subset \beta,\text{ for some }\beta\in \mc{G}_{K^{\frac{1}{2}}}.\}$$
and repeat the algorithm.
\medskip

The only interesting discussion left is about what happens if the algorithm is repeated a few times. Note first that the only way to be repeated is if each time we end up with Case 3. Second, note that $STOCK$ looses at least one fifth of its size after each repetition of the algorithm. Since $STOCK$ has size at most $K^2$ in the beginning, it follows that the algorithm can only be repeated $O(\log K)$ times. Each repetition will add another term to the sum \eqref{abcd1}. Each such term will be estimated using the following result.

\begin{claim}\label{2903claim3.3}
Let $\mc{G}_{K^{\frac{1}{2}}}$ be a subcollection of $Col_{K^{\frac{1}{2}}}$ consisting of squares that are subsets of the ${10}{K^{-1/2}}$ neighbourhood of the zero set of  $Q$.
Then for each $p\ge 6$ and $q\ge 2$ we have
\beq\label{0705e1.37}
\|\sum_{\beta\in \mc{G}_{K^{\frac{1}{2}}}} E_{\beta}g\|_{L^p(w_{B_K})} \lesim_{p, \epsilon} K^{\frac{1}{2}-\frac1{2q}-\frac{3}{2p}+\epsilon} (\sum_{\beta\in \mc{G}_{K^{\frac{1}{2}}}} \|E_{\beta}g\|_{L^p(w_{B_K})}^q)^{1/q},
\endeq
for each $\epsilon>0$.
\end{claim}
\begin{proof}[Proof of Claim \ref{2903claim3.3}]
The zero set of $Q$ is the union of $O(1)$ points and curves, each of which can be thought of as the graph of a function with $O(1)$ derivative. It suffices to assume that the zero set of $Q$ is one such curve given by $s=l(r)$, with $\|l'\|_\infty\lesssim 1$. If not, repeat the following argument with the roles of $r,s$ reversed.  At the expense of an $O(1)$ loss, we may thus assume that there is at most one square $\beta\in \mc{G}_{K^{\frac{1}{2}}}$ in each strip $I\times [0,1]$ with $|I|=K^{-1/2}$. Denote by $U\subset [0,1]^2$ the union of these squares, and write $g_U=g1_U$. Then note that
$$\sum_{\beta\in \mc{G}_{K^{\frac{1}{2}}}} E_{\beta}g=E_{[0,1]^2}g_U$$
and that
$$E_{\beta}g=E_{I_\beta\times [0,1]}g_U$$
where $I_\beta$ is the projection onto the $r$ axis of $\beta$. These observations allow us to recast the claimed inequality \eqref{0705e1.37} in the following more convenient form
$$
\|E_{[0,1]^2}g_U\|_{L^p(w_{B_K})} \lesim_{p, \epsilon} K^{\frac{1}{2}-\frac1{2q}-\frac{3}{2p}+\epsilon} (\sum_{|I|=K^{-1/2}} \|E_{I\times [0,1]}g_U\|_{L^p(w_{B_K})}^q)^{1/q}.
$$
But this inequality follows immediately from Theorem \ref{abc2} ($N=K$) combined with (a weighted version of) Corollary \ref{abc9'} with $\mc{M}_2=\mc{S}_{2,2}$ and $Q_1(r)=r^2$.

\end{proof}
By invoking the above Claim (absorbing the $\log K$ term into the $K^{\epsilon}$ term),  the contribution from Case 3 can be estimated by
$$\|E_{[0, 1]^2} g\|_{L^p(w_{B_K})}\lesim_{\epsilon, p}K^{\frac12(1-\frac{1}{q}-\frac{3}{p})+\epsilon} (\sum_{\beta\in Col_{K^{1/2}}}\|E_{\beta}g\|_{L^p(w_{B_K})}^q)^{1/q}.$$
As before, then raise this inequality  to the power $p$, sum over a finitely overlapping cover of $B_N$ using balls $B_K$ and invoke Minkowski's inequality  to recover the desired \eqref{abc3}.
\bigskip

Of course, in reality, our selection algorithm may run differently, depending on $B_K$. This case can be dealt with by combining the analysis from Case 1, Case 2 and Case 3.

\end{proof}

To obtain  Theorem \ref{2903thm3.1}, we iterate ( a rescaled version of) Proposition \ref{2903p3.2} using parabolic rescaling. All  details are in Section 8 from \cite{BD2}.

\subsection{The final iteration }\label{section4}
In this section we finalize the proof of Theorem \ref{0626theorem1.3}.
We start by recalling Corollary 6.7 from \cite{BD2}. For $p>5$ we let
$$\kappa_p=\frac{p-5}{p-2}.$$

\begin{prop}
\label{gjityiophjytophpotigirti0-we=fdcwee=w=}
For each $\nu>0$, $p\ge 5$ and $\epsilon>0$ there exists a constant $C_{p,\nu,\epsilon}$ such that for each $\kappa_p\le \kappa\le 1$,  for each $\nu-$transverse squares $R_1,\ldots,R_{M}\subset [0,1]^2$, each ball $B_R$ in $\R^n$ with radius $R\ge N\ge 1$ and each $g_i:R_i\to \C$ we have
$$\|(\prod_{i=1}^{M}\sum_{\atop{l(\tau)=N^{-1/2}}}|E_{\tau}g_i|^2)^{\frac1{2M}}\|_{L^{p}(w_{B_R})}\le $$$$C_{p,\nu,\epsilon}
N^{\epsilon}\|(\prod_{i=1}^{M}\sum_{\atop{l(\Delta)=N^{-1}}}|E_{\Delta}g_i|^2)^{\frac1{2M}}
\|_{L^{p}(w_{B_R})}^{1-\kappa}
(\prod_{i=1}^{M}\sum_{\atop{l(\tau)=N^{-1/2}}}\|E_{\tau}g_i\|_{L^{p}(w_{B_R})}^2)^{\frac{\kappa}{2M}},
$$
\end{prop}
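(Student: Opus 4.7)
\medskip

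The plan is to derive this as a two-scale consequence of the multilinear restriction theorem for $\mc{S}_{2,2}$, followed by interpolation. My starting point would be the Brascamp--Lieb inequality (Theorem \ref{abc14}) with $d_0=2$ and $n=5$. The $\nu$-transversality of the squares $R_1,\ldots,R_M$ guarantees, via Proposition \ref{pfek1} together with an approximation argument (choosing representative points inside each $R_i$ and exploiting continuity of the normals $n_1,n_2$), that the Brascamp--Lieb condition \eqref{1712e1.13} holds uniformly. This yields a multilinear Kakeya inequality with constants depending only on $\nu$. A standard wave-packet decomposition at scale $N^{-1}$ then upgrades this Kakeya bound into a multilinear restriction estimate at the endpoint exponent $p=5$, which is precisely the value at which $\kappa_p$ vanishes.

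Once the endpoint is in place, I would extract the square-function formulation. Each $E_\Delta g_i$ with $l(\Delta)=N^{-1}$ has Fourier support in a tube of dimensions roughly $N^{-1}\times N^{-1}\times N^{-2}\times N^{-2}\times N^{-2}$, so it is essentially constant on dual tubes at scale $N$; grouping these into the larger caps of size $N^{-1/2}$ and applying $L^2$ orthogonality (Córdoba/Fefferman style) converts the multilinear restriction bound into the desired base inequality
\[
\Bigl\|\Bigl(\prod_{i=1}^{M}\sum_{l(\Delta)=N^{-1}}|E_{\Delta}g_i|^2\Bigr)^{\frac1{2M}}\Bigr\|_{L^{5}(w_{B_R})}
\lesssim_\epsilon N^{\epsilon}\Bigl(\prod_{i=1}^{M}\sum_{l(\tau)=N^{-1/2}}\|E_{\tau}g_i\|_{L^{5}(w_{B_R})}^2\Bigr)^{\frac1{2M}},
\]
which is exactly the case $p=5$, $\kappa=\kappa_5=0$ of the target (with the right-hand factor degenerate).

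The remaining step is interpolation to higher $p$. The second endpoint comes from the trivial $\kappa=1$ inequality, where one simply drops the $\Delta$-square function and bounds it pointwise by the $\tau$-square function (this is the cheap estimate $\sum_{\Delta\subset\tau}|E_\Delta g_i|^2\lesssim (\sum_{\Delta\subset\tau}|E_\Delta g_i|)^2$ combined with triangle inequality in the outer product). Complex interpolation between the two endpoint inequalities, reading the $L^p$ scaling on both sides, produces a one-parameter family of estimates indexed by $\kappa\in[\kappa_p,1]$, and the algebra gives exactly the exponent $\kappa_p=(p-5)/(p-2)$ as the threshold below which interpolation breaks down (the right-hand side ceases to dominate the left in the limit $\kappa\downarrow\kappa_p$).

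The main obstacle is the first step: passing from Brascamp--Lieb to genuine multilinear restriction with the correct localization against the weight $w_{B_R}$. This is where a multi-scale induction (à la Bourgain--Guth) must be carried out and where all the $N^{\epsilon}$ losses are absorbed; one must track that the implicit constants depend only on $\nu$ and $\epsilon$, not on $N$ or $R$. After that, Steps~2 and~3 are essentially orthogonality and Hölder.
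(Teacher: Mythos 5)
First, a point of comparison: the paper does not actually prove this proposition --- it is quoted verbatim as Corollary 6.7 of \cite{BD2} --- so the relevant benchmark is the argument there, and at the level of the toolbox your outline matches it: BCCT Brascamp--Lieb together with the transversality of the $R_i$ gives a multilinear Kakeya inequality, a Bourgain--Guth induction on scales upgrades it to multilinear restriction at the exponent $2n/d_0=5$ (which is indeed why $\kappa_p$ vanishes at $p=5$), and one concludes with local $L^2$ orthogonality and H\"older. So the ingredients you name are the right ones.

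However, the second half of your sketch has a genuine gap. (i) Your ``base inequality'' is not the $p=5$, $\kappa=0$ case of the statement. At $\kappa=0$ the factor that disappears is the decoupled sum $(\prod_i\sum_{\tau}\|E_\tau g_i\|_{L^p}^2)^{\kappa/2M}$, and the claim reads $\|(\prod_i\sum_{l(\tau)=N^{-1/2}}|E_\tau g_i|^2)^{1/2M}\|_{L^5(w_{B_R})}\lesssim_\epsilon N^\epsilon\,\|(\prod_i\sum_{l(\Delta)=N^{-1}}|E_\Delta g_i|^2)^{1/2M}\|_{L^5(w_{B_R})}$, i.e.\ the coarse-scale square function is dominated by the fine-scale one. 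What you displayed has the fine-scale square function on the left and the decoupled coarse-cap $L^5$ norms on the right; that is a different statement (closer to a reverse decoupling, and not obviously true), and you call the right-hand factor ``degenerate'' even though your display keeps exactly that factor and drops the other. (ii) Even with both endpoints stated correctly, ``complex interpolation between the two endpoint inequalities'' is not a legitimate step here: there is no linear operator or analytic family, both sides are nonlinear in the $g_i$, the two endpoints sit at different Lebesgue exponents while the target is at a single $p$, and --- decisively --- neither endpoint has the $\Delta$-square function on its right-hand side, so no interpolation of the two can produce the factor $\|(\prod_i\sum_{l(\Delta)=N^{-1}}|E_\Delta g_i|^2)^{1/2M}\|_{L^p(w_{B_R})}^{1-\kappa}$. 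In the argument of \cite{BD2} the exponent $\kappa_p=\frac{p-5}{p-2}$ does not arise from interpolating two theorems: one covers $B_R$ by balls $B_N$, and on each $B_N$ combines the critical-exponent multilinear restriction estimate with H\"older between the normalized local $L^2$ and $L^p$ norms and with local $L^2$ orthogonality (this is precisely where the scale-$N^{-1}$ square function enters, since the two scales have comparable $L^2$ mass on $B_N$), and then sums over $B_N\subset B_R$ by Minkowski/H\"older. Your Step 1 (Kakeya $\to$ multilinear restriction with constants depending only on $\nu,\epsilon$) is fine in outline, but as written the passage from $p=5$ to general $p\ge 5$ and $\kappa\in[\kappa_p,1]$ does not go through.
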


Combing this with H\"older's inequality, we get the following inequality  suitable for iterations ($q\ge 2$, $l\ge 0$)
$$\|(\prod_{i=1}^{M}\sum_{\atop{l(\tau)=N^{-2^{-l}}}}|E_{\tau}g_i|^2)^{\frac1{2M}}\|_{L^{p}(w_{B_N})} \le C_{p, \nu,\epsilon}N^{\frac{\kappa}{2^{l-1}}(\frac12-\frac1q)+\epsilon}\times $$
\begin{equation}
\label{fe21}
 \|(\prod_{i=1}^{M}\sum_{\atop{l(\Delta)=N^{-2^{-l+1}}}}
|E_{\Delta}g_i|^2)^{\frac1{2M}}\|_{L^{p}(w_{B_N})}^{1-\kappa}(\prod_{i=1}^{M}\sum_{\atop{l(\tau)=N^{-2^{-l}}}}
\|E_{\tau}g_i\|_{L^{p}(w_{B_N})}^q)^{\frac{\kappa}{Mq}}.
\end{equation}

\bigskip

The following lemma is an immediate consequence of Cauchy--Schwarz, and has nothing to do with transversality. It simply serves as a starting point for our iteration.
\begin{lem}
\label{wlem0081}Consider $M$  squares $R_1,\ldots,R_{M}$. Assume $g_i$ is supported on $R_i$.
Then for $1\le p\le\infty$ and $s\ge 2$
$$\|(\prod_{i=1}^{M}|E_{R_i}g_i|)^{\frac1{M}}\|_{L^{p}({w_{B_N}})}\le N^{2^{-s}}\|(\prod_{i=1}^{M}\sum_{\atop{l(\tau_s)=N^{-2^{-s}}}}|E_{\tau_s}g_i|^2)^{\frac1{2M}}\|_{L^{p}(w_{B_N})}.$$
\end{lem}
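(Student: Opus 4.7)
The plan is to read off this lemma as a single application of Cauchy--Schwarz plus a trivial counting argument, using no transversality or Brascamp--Lieb input. This is consistent with its role as a clean starting point for the multi-scale iteration that feeds into \eqref{fe21}: all geometric content enters later, through Proposition \ref{gjityiophjytophpotigirti0-we=fdcwee=w=}.

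First I would decompose each $R_i$ (up to a measure-zero boundary) into pairwise disjoint squares $\tau_s$ of side length $N^{-2^{-s}}$, and count: since $R_i\subset [0,1]^2$, the number of such $\tau_s$ contained in $R_i$ is at most $l(R_i)^2\, N^{2\cdot 2^{-s}}\le N^{2\cdot 2^{-s}}$. Because $g_i$ is supported on $R_i$, one has the pointwise identity
$$E_{R_i}g_i(x)=\sum_{\tau_s\subset R_i}E_{\tau_s}g_i(x),$$
and applying Cauchy--Schwarz to this sum together with the above count gives
$$|E_{R_i}g_i(x)|\le N^{2^{-s}}\Big(\sum_{l(\tau_s)=N^{-2^{-s}}}|E_{\tau_s}g_i(x)|^2\Big)^{1/2},$$
where extending the summation from $\tau_s\subset R_i$ to all dyadic $\tau_s$ of the given side length is harmless, since the added terms vanish by the support condition on $g_i$.

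Next I would take the geometric mean in $i=1,\ldots,M$ of these pointwise bounds, obtaining
$$\Big(\prod_{i=1}^M|E_{R_i}g_i(x)|\Big)^{1/M}\le N^{2^{-s}}\Big(\prod_{i=1}^M\sum_{l(\tau_s)=N^{-2^{-s}}}|E_{\tau_s}g_i(x)|^2\Big)^{1/(2M)},$$
and then conclude by taking the $L^p(w_{B_N})$ norm on both sides. There is essentially no obstacle; the only sanity check is that extending the inner $\tau_s$-sum to the full collection of dyadic squares at scale $N^{-2^{-s}}$ introduces no extraneous contribution, which follows from the support assumption on $g_i$. The restriction $s\ge 2$ is just there to guarantee that $N^{-2^{-s}}\le 1$ so that the decomposition into sub-squares is meaningful.
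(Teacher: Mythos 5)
Your argument is exactly what the paper intends: it states the lemma without proof, describing it as ``an immediate consequence of Cauchy--Schwarz'' with no transversality input, and your pointwise Cauchy--Schwarz with the count of at most $N^{2\cdot 2^{-s}}$ squares $\tau_s$ per $R_i$ (giving the factor $N^{2^{-s}}$), followed by taking the geometric mean in $i$ and the $L^p(w_{B_N})$ norm, is precisely that argument. The proof is correct and matches the paper's approach.
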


\bigskip

We will apply these results with $p=8$ and $\frac83\le q\le 8$. Note that $\kappa_8=\frac12$.
For $\epsilon>0$, $K\ge 2$ we will from now on write $C_{K,\epsilon}$ for $C_{8,\nu_K,\epsilon}$.
\bigskip

Let $R_1,\ldots, R_{M}\in Col_K$ be arbitrary $\nu_K-$transverse squares and assume $g_i$ is supported on $R_i$.

Start with Lemma \ref{wlem0081}, continue with iterating \eqref{fe21} $s$ times, and invoke parabolic rescaling (Lemma \ref{abc18}) at each step to write for each $\frac12\le \kappa\le 1$
$$\|(\prod_{i=1}^{M}|E_{R_i}g_i|)^{\frac1{M}}\|_{L^8({B_N})}\le  N^{2^{-s}}\|(\prod_{i=1}^{M}\sum_{\atop{l(\tau_s)=N^{-2^{-s}}}}|E_{\tau_s}g_i|^2)^{\frac1{2M}}\|_{L^8(w_{B_N})}\le $$
$$ N^{2^{-s}}(C_{K,\epsilon}N^\epsilon)^{s}\prod_{l=1}^s
N^{\frac{\kappa}{2^{l-1}}(1-\kappa)^{s-l}(\frac12-\frac1q)}\times$$$$\|(\prod_{i=1}^{M}\sum_{\atop{l(\tau)=N^{-1}}}|E_{\tau}g_i|^2)^{\frac1{2M}}\|_{L^8(w_{B_N})}^{(1-\kappa)^{s}}\times $$
$$\prod_{i=1}^{M}\left[\prod_{l=1}^s(\sum_{\atop{l(\tau)=N^{-2^{-l}}}}\|E_{\tau}g_i\|_{L^8(w_{B_N})}^q)^{\frac{\kappa}{q}(1-\kappa)^{s-l}}\right]^{\frac1{M}}$$

$$ \le N^{2^{-s}}(C_{K,\epsilon}N^\epsilon)^{s}(\prod_{i=1}^{M}\sum_{\atop{l(\Delta)=N^{-1/2}}}
\|E_{\Delta}g_i\|_{L^8(w_{B_N})}^q)^{\frac{1-{(1-\kappa)^{s}}}{Mq}}\times$$
$$N^{2^{1-s}\kappa(\frac12-\frac1q)\frac{1-[2(1-\kappa)]^{s}}{1-2(1-\kappa)}}\times\|(\prod_{i=1}^{M}\sum_{\atop{l(\tau)=N^{-1}}}|E_{\tau}g_i|^2)^{\frac1{2M}}\|_{L^8(w_{B_N})}^{(1-\kappa)^{s}}\times$$
\begin{equation}
\label{fe34}
 V(N^{1-2^{-s+1}},8,q)^{\kappa}V(N^{1-2^{-s+2}},8,q)^{\kappa(1-\kappa)}\cdot\ldots\cdot V(N^{0},8,q)^{\kappa(1-\kappa)^{s-1}}.
\end{equation}

Note that the (very weak) inequality
$$\|(\sum_{\atop{l(\Delta)=N^{-1}}}|E_{\Delta}g_i|^2)^{\frac1{2}}\|_{L^8(w_{B_N})}\le N^{O(1)}(\sum_{\atop{l(\Delta)=N^{-1/2}}}\|E_{\Delta}g_i\|_{L^8(w_{B_N})}^q)^{1/q}$$
is a consequence of Minkowski's inequality and standard truncation arguments. The precise value of the exponent is not relevant, all that matters is that it is $O(1)$. Applying H\"older's inequality leads to

$$\|\prod_{i=1}^{M}(\sum_{\atop{l(\Delta)=N^{-1}}}|E_{\Delta}g_i|^2)^{\frac1{2M}}\|_{L^8(w_{B_N})}\le N^{O(1)}(\prod_{i=1}^{M}\sum_{\atop{l(\Delta)=N^{-1/2}}}\|E_{\Delta}g_i\|_{L^8(w_{B_N})}^q)^{\frac{1}{Mq}}.$$

Inserting this into \eqref{fe34} and maximizing over all choices of $R_i\in Col_K$ which are $\nu_K$ transverse, \eqref{fe34} has the following consequence, for all $N\ge K$
$$V_{multi}(N,8,q,\nu_K)\le (C_{K,\epsilon}N^\epsilon)^{s-1} N^{2^{-s}}N^{2^{1-s}\kappa(\frac12-\frac1q)\frac{1-[2(1-\kappa)]^{s}}{1-2(1-\kappa)}}\times$$
\begin{equation}
\label{fe23}
V(N^{1-2^{-s+1}},8,q)^{\kappa}V(N^{1-2^{-s+2}},8,q)^{\kappa(1-\kappa)}\cdot\ldots\cdot V(N^{0},8,q)^{\kappa(1-\kappa)^{s-1}}  N^{O((1-\kappa)^s)}.
\end{equation}
\bigskip
Let $\gamma_q$ be the unique positive number such that
$$\lim_{N\to\infty}\frac{V(N,8,q)}{N^{\gamma_q+\delta}}=0,\;\text{for each }\delta>0$$
and
\begin{equation}
\label{fe27}
\limsup_{N\to\infty}\frac{V(N,8,q)}{N^{\gamma_q-\delta}}=\infty,\;\text{for each }\delta>0.
\end{equation}
By using the fact that $V(N,8,q)\lesssim_\delta N^{\gamma_q+\delta}$ in \eqref{fe23}, it follows that for each $\delta,\epsilon>0$ and $K, s\ge 2$
\begin{equation}
\label{fe32}
\limsup_{N\to\infty}\frac{V_{multi}(N,8,q,\nu_K)}{N^{\gamma_{q,\delta,s,\epsilon,\kappa}}}<\infty
\end{equation}
 where
$$\gamma_{q,\delta,s,\epsilon,\kappa}=\epsilon(s-1)+2^{-s}+\kappa(\gamma_q+\delta)(\frac{1-(1-\kappa)^{s}}{\kappa}-2^{-s+1}
\frac{1-[2(1-\kappa)]^{s}}{2\kappa-1})+$$
\begin{equation}
\label{nmcvuyfgurivgnyy35t789t89y890}
2^{1-s}\kappa(\frac12-\frac1q)\frac{1-[2(1-\kappa)]^{s}}{2\kappa-1} +O_p((1-\kappa)^s).
\end{equation}
Recall that our goal is to prove that
\begin{equation}
\label{djcfhughyvg7trgyn7otgy7nn845nyon}
\gamma_{q}\le \frac12-\frac1{q}.
\end{equation}
\bigskip

Assume for contradiction that this is not true, for some fixed $q\in[\frac83,8]$. Then, for $\kappa$ larger than but close enough to $\frac12$  we have
\begin{equation}
\label{fe26}
\gamma_q>\frac{2\kappa-1}{2\kappa}+\frac12-\frac1q.
\end{equation}
Note that \eqref{fe32} holds for this $\kappa$, as $\kappa_8=\frac12$. A simple computation using that $2(1-\kappa)<1$ and \eqref{fe26} shows that for $s$ large enough and for $\epsilon,\delta$ small enough we have
\begin{equation}
\label{fe33}
\gamma_{q,\delta,s,\epsilon,\kappa}<\gamma_q.
\end{equation}
Indeed, this follows easily by noticing that \eqref{nmcvuyfgurivgnyy35t789t89y890} implies
$$2^s(\gamma_{q,\delta,s,\epsilon,\kappa}-\gamma_q)=o_{\epsilon,\delta,s}(1)+1+\frac{2\kappa}{2\kappa-1}(\frac12-\frac1q-\gamma_q).$$
Fix such  $\epsilon,\delta, s,\kappa$.

Recalling \eqref{0705e1.27},  for each $N\ge K$ we have
\begin{equation}
\label{fek16}
\begin{split}
& V(N, 8, q) \le N^{\beta(K, 8, q, \epsilon)+\frac{1}{2}-\frac1{2q}-\frac{3}{16}+\epsilon}\\
&+ \Lambda_{K, 8, q, \epsilon} \log_K N \max_{1\le M\le N}\big[ (\frac{M}{N})^{-\frac{1}{2}+\frac1{2q}+\frac{3}{16}+\epsilon} V_{multi}(M, 8, q, \nu_K) \big].
.\end{split}
\end{equation}

We  next argue that $\gamma_{q,\delta,s,\epsilon,\kappa}\le \frac{1}{2}-\frac1{2q}-\frac{3}{16}$. If this were not true, we could choose $K$ large enough so that $$\beta(K,8,q,\epsilon)+\frac{1}{2}-\frac1{2q}-\frac{3}{16}\le\gamma_{q,\delta,s,\epsilon,\kappa}.$$
Combining this with \eqref{fe32} and \eqref{fek16} leads to
$$D(N,8,q)\le( \Lambda_{K, 8, q, \epsilon} \log_K N+1)N^{\gamma_{q,\delta,s,\epsilon,\kappa}}.$$
This of course contradicts \eqref{fe33} and \eqref{fe27}.

Using now that $\gamma_{q,\delta,s,\epsilon,\kappa}\le \frac{1}{2}-\frac1{2q}-\frac{3}{16}$ together with \eqref{fe32}, we can rewrite  \eqref{fek16} as follows
$$D(N,8,q)\le N^{\beta(K,8,q,\epsilon)+\frac{1}{2}-\frac1{2q}-\frac{3}{16}}+ \Lambda_{K, 8, q, \epsilon} \log_K NN^{\frac{1}{2}-\frac1{2q}-\frac{3}{16}}.$$
Since choosing $K$ large sends $\beta(K,8,q,\epsilon)$ to zero, the above inequality forces $\gamma_q\le \frac{1}{2}-\frac1{2q}-\frac{3}{16}$.

It will now be crucial to observe that, due to our original restriction $q\ge \frac83$, we have
\begin{equation}
\label{abc12}
\frac{1}{2}-\frac1{2q}-\frac{3}{16}\le \frac12-\frac1q.
\end{equation}
This further forces $\gamma_q\le \frac12-\frac1q$, contradicting our original assumption that \eqref{djcfhughyvg7trgyn7otgy7nn845nyon} is false.

This very last line of the argument explains why we needed to enforce the restriction $q\ge \frac83$ in the inequality \eqref{0626e1.10}. Recall also that the other restriction, namely $q\le 8$ was needed for various applications of Minkowski's inequality.

\section{A three dimensional cubic surface}
\label{abcsec5}

Recall the function
\beq
\Phi^{(2, 3)}(r, s)=(r, s, r^2, rs, s^2, r^3, r^2 s, r s^2, s^3).
\endeq
and the surface
\beq
\mc{S}_{2, 3}:=\{\Phi^{(2, 3)}(r, s): (r, s)\in [0, 1]^2\}.
\endeq
Let $0<\delta\le 1$. Recall that we have denoted  by $V^{(2, 3)}_{p, q}(\delta)$ the smallest constant such that
\beq\label{0709e2.3}
\|E^{(2, 3)}_{[0, 1]^2} g\|_{L^p(w_B)} \le V^{(2, 3)}_{p, q}(\delta) (\sum_{\substack{R:\; \text{square in } [0, 1]^2;\\ l(R)=\delta}} \|E^{(2, 3)}_R g\|_{L^p(w_B)}^{q})^{1/q}
\endeq
holds for each ball $B\subset \R^9$ of radius $\delta^{-3}$ and each $g:[0,1]^2\to\C$. \\

In this section we will prove the second half of Theorem \ref{2405thm1.5}. More precisely, we will assume that
Conjecture \ref{conjecture0} holds for $d=2,k=3$ and $1\le l\le 2$,  and will prove the estimate \eqref{abc13}, which we recall below
\beq\label{3004e2.3}
V^{(2, 3)}_{20, 20}(\delta)\lesim_{\epsilon} \delta^{-2(\frac12-\frac1{20})+\epsilon}.
\endeq

When proving \eqref{3004e2.3}, we follow the general framework from Bourgain and Demeter \cite{BD2}  and Bourgain, Demeter and Guth \cite{BDG}. However there are several significant differences.

The first difference is that we iterate different quantities. In our proof, we iterate the quantity \eqref{0720g3.20}, which is
\beq\label{0720h3.5}
D_p (q, B^r):=\Big( \prod_{i=1}^M \sum_{J_{i, q}\subset R_i}\|E_{J_{i, q}} g\|^{\frac{8}{3}}_{L^p_{\#}(w_{B^r})} \Big)^{\frac{3}{8M}}.
\endeq
The choice of the exponent $q=\frac{8}{3}$ will be explained more thoroughly in Subsection \ref{0805sub2.1}. We recall that $q=2$ was used in \cite{BDG}.

A second difference with the proof in \cite{BDG} is in the multilinear Kakeya inequalities (see Lemma \ref{0720lemma3.5h}). To prove these inequalities, we need to check a transversality condition, as presented  in Lemma \ref{0711lemma1.3}. The case $d_0=2$ has been covered in Proposition 4.4 from \cite{BD2}, but the case $d_0=5$ needs some analysis.  \\

The organisation of this section is as follows. In Subsection \ref{0805sub2.1} we introduce transversality and the relevant multilinear Kakeya inequalities. These inequalities will be used to derive a crucial ball-inflation lemma (Lemma \ref{main1}).

In Subsection \ref{2405sub3.2}, we will introduce a multilinear decoupling inequality and will recall why it is  ``essentially equivalent'' to the linear one.

In Subsection \ref{2405sub3.3}, we will modify the iteration argument from  \cite{BDG} to prove \eqref{3004e2.3}.


\subsection{Transversality, Kakeya inequalities and a ball-inflation lemma}\label{0805sub2.1}
In the cubic case described in this section, we will use Theorem \ref{abc14}  with $n=9$ and both $d_0=2$ and $d_0=5$. Here $n=9$ is the dimension of the ambient space where the surface $\mc{S}_{2, 3}$ lives. The  $d_0=2$ case reflects the fact that the linear space spanned by the first order derivatives of $\Phi^{(2, 3)}$ has dimension two. The $d_0=5$ case reflects the fact  that the linear space spanned by the first and second order derivatives has dimension five. Moreover, $M$ will be a large constant that will be determined later.

\begin{defi}\label{0706defi2.2}
Let $M\ge 1000$. The $M$ sets $S_1, ..., S_M\subset [0, 1]^2$ are called $\nu$-transverse, if for each polynomial $P(r, s)$ with $\text{deg}(P)\le 100$ and $\|P\|= 1$, we have that for each choice of $\frac{M}{100}$ different sets $S_{i_1}, ..., S_{i_{\frac{M}{100}}}$, there exists at least one set $S_{i_j}$ such that
\beq
|P(r ,s)|\ge \nu, \text{ for each } (r, s)\in S_{i_j}.
\endeq
Here $\|P\|$ is given by the $l^1$ sum of all the coefficients of the polynomial $P$.
\end{defi}
In a qualitative way, $S_1, ..., S_M$ being transverse is the same as saying that for each polynomial $P(r, s)$ with $\text{deg}(P)\le 100$ and $\|P\|= 1$, the zero set of $P$ intersects  no more than $\frac{M}{100}$ sets from $S_1, ..., S_M$.

We make a remark that transverse sets need not be pairwise disjoint.
\\

For a point $(r, s)\in [0, 1]^2$, we introduce the following notation. We first let $\mc{M}^{(1)}(r, s)$ denote the $9\times 2$ matrix
with columns $\Phi^{(2, 3)}_r(r, s)$ and $\Phi^{(2, 3)}_s(r, s)$.
We also let $\mc{M}^{(2)}(r, s)$ denote the $9\times 5$ matrix with columns
$\Phi^{(2, 3)}_r(r, s), \Phi^{(2, 3)}_s(r, s), \Phi^{(2, 3)}_{rr}(r, s), \Phi^{(2, 3)}_{rs}(r, s), \Phi^{(2, 3)}_{ss}(r, s).$ We let $W^{(1)}(r, s)$ denote the two dimensional linear space
$$
W^{(1)}(r, s)=\langle \Phi^{(2, 3)}_r(r, s), \Phi^{(2, 3)}_s(r, s)\rangle.
$$
Finally, we let $W^{(2)}(r, s)$ denote the five dimensional linear space
$$
W^{(2)}(r, s)=\langle \Phi^{(2, 3)}_r(r, s), \Phi^{(2, 3)}_s(r, s), \Phi^{(2, 3)}_{rr}(r, s), \Phi^{(2, 3)}_{rs}(r, s), \Phi^{(2, 3)}_{ss}(r, s)\rangle.
$$

The transversality introduced in the above definition is stronger than the Brascamp-Lieb transversality condition \eqref{1803e1.9}. This is proven in the following lemma.
\begin{lem}\label{0711lemma1.3}
Let $\{(r_j, s_j)\}_{1\le j\le M}$ be $M$ different points from $[0, 1]^2$ which are also $\nu$-transverse for some $\nu>0$. Then the collection of linear spaces $W^{(1)}(r_j, s_j)$ satisfies the Brascamp-Lieb condition \eqref{1803e1.9} with $d_0=2$, and the collection of linear spaces $W^{(2)}(r_j, s_j)$ satisfies the Brascamp-Lieb condition \eqref{1803e1.9} with $d_0=5$.
\end{lem}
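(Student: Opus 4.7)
The plan is to derive the Brascamp--Lieb condition \eqref{1803e1.9} from Conjecture \ref{conjecture0}, whose validity for $(d,k,l)=(2,3,1)$ and $(2,3,2)$ is built into the statement of Theorem \ref{2405thm1.5}. Fix $l \in \{1,2\}$, so $d_0 = \binom{2+l}{l}-1 \in \{2,5\}$, and fix a subspace $V \subset \R^9$. The extreme cases $\dim V \in \{0,9\}$ are handled directly: $V=\{0\}$ gives $0 \le 0$, and $V=\R^9$ forces $\dim\pi_j(V)=d_0$, reducing \eqref{1803e1.9} to $9 \le 9$. Henceforth assume $1 \le \dim V \le 8$.

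The main observation is a rank identity: for any choice of basis $v_1,\ldots,v_{\dim V}$ of $V$, the kernel of the linear map $v \mapsto (v \cdot \Phi^{(\alpha)}(t))_{1 \le |\alpha| \le l}$ acting on $V$ equals $V \cap W^{(l)}(t)^\perp$, so
$$\mathrm{rank}\, \mc{M}_V^{(l)}(r_j,s_j) = \dim(V) - \dim\bigl(V \cap W^{(l)}(r_j,s_j)^\perp\bigr) = \dim \pi_j(V).$$
By Conjecture \ref{conjecture0}, $\mc{M}_V^{(l)}(t)$ has some minor $\Delta(t)$ of order $m_V := \lfloor d_0 \dim(V)/9 \rfloor + 1$ that is not identically zero on $[0,1]^2$. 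Since each entry of $\mc{M}^{(l)}(t)$ is a polynomial of degree $\le k-1 = 2$, one has $\deg \Delta \le 2 m_V \le 12 \le 100$. Normalising $P := \Delta/\|\Delta\|$ and applying Definition \ref{0706defi2.2} to $P$: the number of indices $j$ with $|P(r_j,s_j)| < \nu$ is strictly less than $M/100$, so $\Delta(r_j,s_j) \ne 0$ for at least $(99/100)M$ values of $j$. At each such point the rank identity yields $\dim \pi_j(V) \ge m_V$, hence
$$\sum_{j=1}^M \dim \pi_j(V) \ge \frac{99M}{100} \, m_V.$$

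The last step is the numerical check $100\, d_0 \dim(V) \le 891\, m_V$, which is exactly what is needed to upgrade the lower bound on $\sum_j \dim \pi_j(V)$ to \eqref{1803e1.9}. Writing $d_0 \dim(V) = 9q + r$ with $0 \le r \le 8$, one has $9 m_V = 9q + 9 \ge d_0 \dim(V) + 1$, and therefore
$$891\, m_V \;\ge\; 99\, d_0 \dim(V) + 99 \;\ge\; 100\, d_0 \dim(V),$$
provided $d_0 \dim(V) \le 99$; this is automatic from $d_0 \dim(V) \le 5 \cdot 9 = 45$.

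The hard part of the plan is not the argument itself but its algebraic prerequisite, namely Conjecture \ref{conjecture0} for $(d,k,l) = (2,3,2)$ (the case $l=1$ is known through Proposition 4.4 of \cite{BD2}); the proof of that conjecture is computationally intricate and is deferred to the Appendix of the paper. Everything else above is degree-counting, the quantitative pigeonhole built into Definition \ref{0706defi2.2}, and the one-line arithmetic check.
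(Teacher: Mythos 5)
Your argument is correct and follows essentially the same route as the paper's proof: identify $\dim\pi_j(V)$ with the rank of $\mc{M}_V^{(l)}(r_j,s_j)$ via rank--nullity, invoke Conjecture \ref{conjecture0} to produce a not-identically-vanishing minor of order $\lfloor d_0\dim(V)/9\rfloor+1$ of degree at most $100$, use the transversality in Definition \ref{0706defi2.2} to see that this minor is nonzero at all but fewer than $M/100$ of the points, and close with the arithmetic check. The only (harmless) differences are that you treat $l=1$ by the same mechanism instead of citing Proposition 4.4 of \cite{BD2} and you write out the final numerical verification explicitly.
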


\begin{proof}[Proof of Lemma \ref{0711lemma1.3}:]  We only write down the details for the case $d_0=5$. The other case $d_0=2$ is similar, and was essentially dealt with in Proposition 4.4 from \cite{BD2}.

Fix a linear space $V\subset \R^{9}$. Let $dim(V)$ denote the dimension of the space $V$. Let $\{v_1, v_2, ..., v_{dim(V)}\}$ be an orthogonal basis of $V$. We need to show that
\beq\label{0713f1.11}
dim(V)\le \frac{9}{5M} \sum_{j=1}^M dim(\pi_j(V)).
\endeq
By the rank-nullity theorem, $dim(\pi_j(V))$ equals the rank of the matrix $\mc{M}_V^{(2)}(r_j, s_j)$, which is given by
\beq
(v_1, v_2, ..., v_{dim(V)})^T \times \mc{M}^{(2)}(r_j, s_j).
\endeq
By Conjecture \ref{conjecture0} the matrix $\mc{M}_V^{(2)}(r,s)$ has at least one minor  of order $\big[\frac{5\cdot dim(V)}{9}\big]+1$, whose determinant equals $P(r,s)$ for some nonzero polynomial $P$. Moreover, we know that the degree of the polynomial $P$ is smaller than
$$
2\cdot \left(\Big[\frac{dim(V)\cdot 5}{9}\Big]+1 \right)\le 100,
$$
since each entry of $\mc{M}_V^{(2)}(r, s)$ is a polynomial of degree at most two.
Recall that $\{(r_j, s_j)\}_{1\le j\le M}$ are $\nu$-transverse. By definition, we know that there exist at least $M(1-\frac{1}{1000})$ different points among these, on each of which the polynomial $P$ does not vanish. This is the same as saying that for these $(r_j, s_j)$, the matrix $\mc{M}_V^{(2)}(r_j,s_j)$ has rank at least
\beq
\Big[\frac{dim(V)\cdot 5}{9}\Big]+1.
\endeq
Hence the right hand side of \eqref{0713f1.11} is greater than
\beq
\frac{9}{5} (1-\frac{1}{1000}) \left( \Big[\frac{dim(V)\cdot 5}{9}\Big]+1 \right).
\endeq
The last display is easily seen to be bigger than or equal to $dim(V)$. This finishes the proof of the estimate \eqref{0713f1.11}.

\end{proof}

From the above lemma, we know that in order to apply the Brascamp--Lieb inequality, it suffices to guarantee the transversality introduced in Definition \ref{0706defi2.2}. Indeed, in the forthcoming Bourgain--Guth-type argument, we also need that the notion of transversality in Definition \ref{0706defi2.2} is ``stable''. To be precise, for $M$ different points in $[0, 1]^2$ which are transverse, we also need that all the points in a small neighbourhood of these points are transverse.

\begin{lem}\label{0711cor1.4}
There exists $\Lambda>0$ such that for each $K\ge 1$ there exists $\nu_{K}>0$   so that any $\Lambda K$ or more squares in $Col_K$  are $\nu_{K}-$transverse.
\end{lem}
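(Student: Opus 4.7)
The plan is to mirror the proof of Lemma \ref{0604lemma2.4}, combining a Bezout-type bound on the number of $1/K$-squares met by the zero set of a bounded-degree polynomial with a compactness argument on the finite-dimensional sphere of normalized polynomials. The guiding picture is that a collection of $1/K$-squares fails to be $\nu_K$-transverse only if some polynomial $P$ with $\deg P\le 100$, $\|P\|=1$ has more than $M/100$ squares intersecting the thin sublevel set $\{|P|<\nu_K\}$; we choose $\Lambda$ so that $\Lambda K/100$ exceeds the maximum possible number of such squares.

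The first ingredient is the algebraic estimate: there exists an absolute constant $C_0$ (depending only on the degree bound $100$) such that for every nonzero polynomial $P$ with $\deg P\le 100$, the zero set $Z(P)\cap [0,1]^2$ meets at most $C_0 K$ squares from $Col_K$. To prove this, factor out of $P$ all factors of the form $(r-i/K)$ or $(s-j/K)$; at most $100$ such axis-parallel lines appear, each covered by $\le K+1$ squares, contributing $O(K)$ squares. The remaining factor $\tilde P$ meets every horizontal or vertical grid line in at most $100$ points by Bezout, so $Z(\tilde P)$ crosses the full $1/K$-grid at most $O(K)$ times. Since each connected component of $Z(\tilde P)$ away from the grid lies in a single square, the total count of squares met by $Z(P)$ is $O(K)$.

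The second ingredient upgrades this to sublevel sets via compactness: for each fixed $K$ there is $\nu_K>0$ such that for every $P$ with $\deg P\le 100$ and $\|P\|=1$, the open sublevel set $\{|P|<\nu_K\}$ meets at most $C_0 K$ squares of $Col_K$. Suppose not: for each integer $n$ there would exist $P_n$ with $\|P_n\|=1$ and a family $\mc{F}_n\subset Col_K$ with $|\mc{F}_n|>C_0 K$, each of whose members contains a point with $|P_n|<1/n$. Since $Col_K$ is finite and the unit sphere of polynomials of degree $\le 100$ is compact (in the finite-dimensional space of such polynomials), we may pass to a subsequence along which $\mc{F}_n=\mc{F}$ is constant and $P_n\to P$ uniformly with $\|P\|=1$. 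For each $R\in \mc{F}$ pick $x_n^R\in R$ with $|P_n(x_n^R)|<1/n$; by compactness of the closed square $R$ and uniform convergence, a limit point $x^R\in R$ satisfies $P(x^R)=0$. Then $Z(P)$ meets all members of $\mc{F}$, contradicting the first step. With this $\nu_K$ in hand, take $\Lambda=\max(1000,\,101 C_0)$: for any $M\ge \Lambda K$ squares in $Col_K$ and any admissible $P$, at most $C_0K<M/100$ of them meet $\{|P|<\nu_K\}$, so in every subfamily of $M/100$ squares at least one, call it $S_{i_j}$, satisfies $|P|\ge \nu_K$ throughout, which is exactly Definition \ref{0706defi2.2}.

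The only real subtlety is the compactness step: the integer-valued count of squares met by $Z(P)$ is not continuous in $P$ (it can jump upward when $P$ moves so that a grid edge or vertex becomes a zero), so one cannot produce a uniform $\nu_K$ by a naive continuity argument. The contradiction-plus-subsequence argument above sidesteps this by exploiting the finiteness of $Col_K$ to stabilize the offending family $\mc{F}$ and then passes to a limiting polynomial that violates the Bezout-type count.
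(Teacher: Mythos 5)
Your overall architecture --- a covering bound of the form ``the zero set of any nonzero polynomial of degree at most $100$ meets at most $C_0K$ squares of $Col_K$'', upgraded by a compactness/contradiction argument to the sublevel sets $\{|P|<\nu_K\}$, and then a choice of $\Lambda$ so that the bad squares cannot exhaust a subfamily of size $\frac{M}{100}$ --- is exactly the paper's. The paper simply cites the tube-volume theorem of \cite{Wo} (applied to the $\frac{10}{K}$-neighborhood of the zero set) for the covering bound, and compresses your second step into the phrase ``easily seen to be positive, via a compactness argument''; your subsequence argument is a correct expansion of that, and your bookkeeping $\Lambda=\max(1000,101C_0)$ is carefully aligned with the $\frac{M}{100}$ threshold in Definition \ref{0706defi2.2} (arguably more carefully than the paper's terse $\Lambda=C_{100}+1$).

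However, your elementary substitute for the covering bound has a genuine gap. After factoring out the grid lines and counting the $O(K)$ intersections of $Z(\tilde P)$ with the grid, you conclude that the number of squares met is $O(K)$ because ``each connected component of $Z(\tilde P)$ away from the grid lies in a single square''. That shows each such component contributes only one square, but it does not bound how many such components there are; nothing you wrote rules out, a priori, a curve with a tiny oval inside each of $\sim K^2$ distinct squares. You need an a priori bound on the number of connected components of a real plane algebraic curve of degree at most $100$ --- Harnack's inequality (or the Oleinik--Petrovsky/Milnor--Thom bounds), which gives $O(\deg^2)=O(1)$ components --- so that these grid-avoiding components contribute only $O(1)$ squares and your $C_0K$ bound follows. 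Alternatively, you can bypass the issue entirely by quoting \cite{Wo} as the paper does: the $\frac{10}{K}$-neighborhood of $Z(P)$ has area $\lesssim_{\deg} K^{-1}$, and every square of $Col_K$ meeting $Z(P)$ is contained in that neighborhood, so at most $O(K)$ squares of area $K^{-2}$ can meet the zero set. With either patch, the compactness step and the verification of Definition \ref{0706defi2.2} go through as you wrote them.
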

\begin{proof}
Let $d\ge 1$. By the main theorem in \cite{Wo} it follows that the $\frac{10}{K}-$neighborhood in $[0,1]^2$ of the zero set of any nontrivial polynomial of degree $\le d$ in two variables will intersect at most $C_dK$ squares in $Col_K$. The quantity
$$\nu_K:=\min_{Col\subset Col_K\;\;\atop{|Col|\ge (C_{100}+1)K}}\inf_{\deg(Q)\le {100},\atop{\|Q\|=1}}\max_{R\in Col}\inf_{(t,s)\in R}|Q(t,s)|$$
is easily seen to be positive, via a compactness argument. We can take $\Lambda=(C_{100}+1)$
\end{proof}

%
%
Transversality will manifest itself in two ways throughout the argument. One is in the equivalence between linear and multilinear decoupling (see next subsection). The second manifestation is in the form of the following Kakeya inequality, essentially proved in \cite{BBFL}. This is a very close analog of Theorem 6.5 from \cite{BDG}, the proof is essentially identical to that one.
\begin{lem}\label{0720lemma3.5h}
Let $l=1$ or $l=2$ and define $d_0=\frac{l(l+3)}{2}$. Let $S_1, ..., S_M$ be sets in $[0, 1]^2$ that are $\nu$-transverse for some $\nu>0$. Consider $M$ families $\mc{P}_j$ consisting of rectangular boxes $P$ in $\R^9$, that we refer to as  plates,  having the following properties\\
1) For each $P\in \mc{P}_j$, there exits $(r_j, s_j)\in S_j$ such that $d_0$ of the axes of $P$ have side lengths equal to $R^{1/2}$ and span $W^{(l)}(r_j, s_j)$, while the remaining $9-d_0$ axes have side lengths equal to $R$;\\
2) all plates are subsets of a ball $B_{4R}$ of radius $4R$.\\
Then we have the following inequality
\beq
\nint_{B_{4R}} | \prod_{j=1}^M F_j |^{\frac{1}{2M}\frac{18}{d_0}} \lesim_{\epsilon,\nu} R^{\epsilon}\left[ \prod_{j=1}^M | \nint_{B_{4R}} F_j |^{\frac{1}{2M}} \right]^{\frac{18}{d_0}}
\endeq
for each function $F_j$ of the form
\beq
F_j=\sum_{P\in \mc{P}_j} c_P 1_P.
\endeq
\end{lem}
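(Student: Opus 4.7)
The plan is to deduce this estimate from a known multilinear Kakeya inequality for $d_0$-dimensional plates due to Bennett--Bez--Flock--Lee \cite{BBFL}. That result takes as its sole hypothesis a Brascamp--Lieb transversality condition of the form \eqref{1712e1.13} for the family of $d_0$-dimensional subspaces indexed by the plate families; in our situation the relevant subspaces are $W^{(l)}(r_j, s_j)$, and the required transversality has been established in Lemma \ref{0711lemma1.3} as a consequence of the combinatorial $\nu$-transversality of $S_1,\ldots,S_M$ together with the relevant case of Conjecture \ref{conjecture0}. In particular, no new linear-algebraic input is needed at this stage.

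With the Brascamp--Lieb transversality in hand, the numerology matches the critical scaling $np = d_0 M$: at $n = 9$ this gives $p = d_0 M/9$, so the critical integration exponent is $\frac{9}{d_0 M} = \frac{1}{2M}\cdot \frac{18}{d_0}$, which is exactly the power to which $\prod_{j=1}^M F_j$ is raised inside the left-hand integral of the claim. The plate geometry also matches: each $F_j$ varies on scale $R$ in the $9 - d_0$ directions orthogonal to $W^{(l)}(r_j, s_j)$ and on scale $R^{1/2}$ in the $d_0$ directions spanning $W^{(l)}(r_j, s_j)$, which is precisely the situation accommodated by the plate version of multilinear Kakeya in \cite{BBFL}. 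Invoking that inequality inside $B_{4R}$ and normalizing both sides by $|B_{4R}|$ to produce the averaged integrals $\nint_{B_{4R}}$ yields the bound with the customary $R^\epsilon$ endpoint loss, exactly as in the proof of Theorem 6.5 of \cite{BDG}.

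The main obstacle is really encapsulated in Lemma \ref{0711lemma1.3}: one must verify that the combinatorial transversality of Definition \ref{0706defi2.2} is strong enough to deliver the linear-algebraic Brascamp--Lieb transversality \eqref{1712e1.13} for the $d_0$-dimensional subspaces $W^{(l)}(r_j, s_j)$, with bounds uniform in the choice of transverse points. Since that lemma has already been proved above, via the non-vanishing minor supplied by Conjecture \ref{conjecture0}, the present lemma reduces to a direct application of \cite{BBFL}. The remaining subtlety, which is also handled in the \cite{BDG} proof of the analogous Theorem 6.5, is the customary pigeonholing argument that allows one to assume all plates are essentially confined to $B_{4R}$ and that the coefficients $c_P$ are dyadically constant; this is standard and does not affect the structure of the argument.
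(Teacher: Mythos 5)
Your proposal is correct and follows essentially the same route as the paper, which likewise treats the lemma as a direct consequence of the Brascamp--Lieb transversality secured in Lemma \ref{0711lemma1.3} together with the plate multilinear Kakeya machinery of \cite{BBFL}, declaring the proof essentially identical to that of Theorem 6.5 in \cite{BDG}. Your additional check of the exponent numerology $\frac{1}{2M}\cdot\frac{18}{d_0}=\frac{9}{Md_0}$ against the critical scaling $np=d_0M$ and your remark on uniformity over the choice of transverse points are consistent with how that argument runs.
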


Now we are ready to state our main lemma, which will be referred to as the ``ball-inflation" lemma. This type of lemma first appeared in \cite{BDG}, and played a crucial role in proving the Vinogradov mean value theorem in dimension one.

\begin{lem}\label{main1}
Let $n=9$. Fix $l=1$ or $2$ and $p\ge \frac{16n}{3l(l+3)}$. Let $R_1, ..., R_M$ be $\nu$-transverse squares in $[0,1]^2$. Let $B$ be an arbitrary ball in $\R^n$ of radius $\rho^{-(l+1)}$. Let $\mc{B}$ be a finitely overlapping cover of $B$ with balls $\Delta$ of radius $\rho^{-l}$. For each $g:[0, 1]^2\to \C$, we have
\beq\label{0706e2.12}
\begin{split}
& \frac{1}{|\mc{B}|} \sum_{\Delta\in \mc{B}}\left[ \prod_{i=1}^M\left( \sum_{\substack{R'_i \text{ square in } R_i\\ l(R'_i)=\rho}} \|E_{R'_i}g\|_{L^{\frac{l(l+3)p}{2n}}_{\#}(w_{\Delta})}^\frac{8}{3} \right)^{\frac{3}{8}} \right]^{p/M}\\
& \lesim_{\epsilon,\nu} \rho^{-\epsilon} \left[ \prod_{i=1}^M\left( \sum_{\substack{R'_i \text{ square in } R_i\\ l(R'_i)=\rho}} \|E_{R'_i}g\|_{L^{\frac{l(l+3)p}{2n}}_{\#}(w_B)}^{\frac{8}{3}} \right)^{\frac{3}{8}} \right]^{p/M}.
\end{split}
\endeq
\end{lem}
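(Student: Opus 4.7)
The plan is to invoke the multilinear Kakeya inequality of Lemma \ref{0720lemma3.5h} at scale $R = \rho^{-(l+1)}$, applied to nonnegative plate functions built from the decoupling data. This follows the template of the ball-inflation argument in \cite{BDG}, adapted here to the surface $\mc{S}_{2,3}$ and to the $l^{8/3}$ summation at play in this paper's iteration scheme. Throughout, set $q_0 := d_0 p/n$ with $d_0 = l(l+3)/2$; the hypothesis $p \ge 16n/(3d_0)$ is equivalent to $q_0 \ge 16/3$, safely above the exponent $8/3$ appearing in the inner sum.

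First, for each square $R'_i$ of side $\rho$ contained in $R_i$, I pick $(r_i, s_i) \in R'_i$ and recall that the Fourier transform of $|E_{R'_i}g|^2$ is supported in $\Phi^{(2,3)}(R'_i) - \Phi^{(2,3)}(R'_i)$, which lies in an $O(\rho^{l+1})$-neighborhood of (a translate of) the $l$-th order tangent space $W^{(l)}(r_i, s_i)$. A smoothing/uncertainty argument then shows that $|E_{R'_i}g|^{q_0}$ is essentially constant on plates $P$ having $d_0$ short axes of length $\rho^{-(l+1)/2}$ spanning $W^{(l)}(r_i, s_i)$ and $n-d_0$ long axes of length $\rho^{-(l+1)}$ transverse to $W^{(l)}(r_i, s_i)$ --- precisely the plates admissible in Lemma \ref{0720lemma3.5h}. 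Using this, I construct, for each $i$, a nonnegative plate function
$$F_i(x) = \sum_{R'_i \subset R_i}\;\sum_{P \in \mc{P}_{R'_i}} c_{R'_i, P}\, \mathbf{1}_P(x),$$
whose constants $c_{R'_i, P}$ are calibrated so that, up to rapidly decaying Schwartz tails, $\nint_\Delta F_i$ dominates the local decoupling expression appearing in the left-hand side of \eqref{0706e2.12} for every ball $\Delta$ of radius $\rho^{-l}$ inside $2B$, while at the large scale $\nint_B F_i$ is in turn controlled by the global decoupling expression on $B$.

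Second, Lemma \ref{0711lemma1.3} converts the $\nu$-transversality of $R_1,\ldots,R_M$ into the Brascamp--Lieb transversality of the tangent spaces $W^{(l)}(r_i, s_i)$, so Lemma \ref{0720lemma3.5h} can be applied at $R = \rho^{-(l+1)}$ and yields, with $\alpha := 18/d_0$,
$$\nint_B \Big(\prod_{i=1}^M F_i\Big)^{\frac{\alpha}{2M}} \lesssim_\epsilon \rho^{-\epsilon} \prod_{i=1}^M \Big(\nint_B F_i\Big)^{\frac{\alpha}{2M}}.$$
The key identity $p/(M q_0) = \alpha/(2M)$, together with standard Jensen and H\"older manipulations (whose loss is absorbed into $\rho^{-\epsilon}$ thanks to the slack $q_0 \ge 16/3 > 8/3$), then translates the above Kakeya bound into the desired inequality \eqref{0706e2.12}. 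It is this final exponent matching that dictates the lower bound on $p$ in the hypothesis.

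\emph{Main obstacle.} The most delicate step is the plate approximation. For $l=2$ the uncertainty-principle plates for $|E_{R'_i}g|^{q_0}$ are naturally anisotropic, with three distinct axis scales ($\rho^{-1}$ along $W^{(1)}$, $\rho^{-2}$ along $W^{(2)}\setminus W^{(1)}$, and $\rho^{-3}$ transverse to $W^{(2)}$), whereas Lemma \ref{0720lemma3.5h} is formulated for plates with only two axis scales. Reconciling the two plate geometries while keeping the loss subpolynomial in $\rho$, and carrying out the final Jensen/H\"older matching without losing powers of $\rho$ coming from the $\rho^{-2}$-many squares $R'_i \subset R_i$, is where the bulk of the technical work of the proof is expected to lie.
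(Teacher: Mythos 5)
Your architecture is the same as the paper's, which simply defers to the proof of Theorem 6.6 in \cite{BDG}: dominate the local quantities by plate functions, convert $\nu$-transversality into Brascamp--Lieb transversality via Lemma \ref{0711lemma1.3}, apply the Kakeya inequality of Lemma \ref{0720lemma3.5h} at scale $R=\rho^{-(l+1)}$, and match exponents through $q_0=\frac{l(l+3)p}{2n}=\frac{d_0p}{n}$, so that $\frac{p}{Mq_0}=\frac1{2M}\cdot\frac{18}{d_0}$. However, there is a genuine gap at exactly the step the paper's proof singles out. First, a numerical slip: since $l(l+3)=2d_0$, the hypothesis $p\ge\frac{16n}{3l(l+3)}$ is $p\ge\frac{8n}{3d_0}$, i.e. $q_0\ge\frac83$ with equality allowed, not $q_0\ge\frac{16}{3}$; there is no guaranteed slack. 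Second, and more seriously, your mechanism for reconciling the $l^{8/3}$ sums in \eqref{0706e2.12} with the $l^{q_0}$ structure forced by the Kakeya exponent --- ``standard Jensen and H\"older manipulations whose loss is absorbed into $\rho^{-\epsilon}$ thanks to the slack'' --- does not work. The H\"older step \eqref{erufguyrtgrt8g89rt07grt8wer00wfqe=dpqeopfifgi} on each ball $\Delta$ costs a factor $(\#(R_i))^{\frac38-\frac1{q_0}}$ per square $R_i$, and $\#(R_i)\sim\rho^{-2}$, so whenever $q_0>\frac83$ this is a polynomial loss in $\rho^{-1}$, which cannot be hidden in $\rho^{-\epsilon}$; extra slack in $q_0$ makes the loss larger, not smaller. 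The way this is compensated in \cite{BDG} (and implicitly here) is a preliminary dyadic pigeonholing: for each $i$ one discards the squares $R'_i$ whose $\|E_{R'_i}g\|_{L^{q_0}_\#(w_B)}$ is much smaller than the maximal one and restricts to one of $O(\log\frac1\rho)$ families on which these big-ball norms are comparable; then, after applying Kakeya, reverting from the $l^{q_0}$ expression to the $l^{8/3}$ expression on $B$ regains exactly the factor $(\#(R_i))^{\frac38-\frac1{q_0}}$, leaving only logarithmic losses. Without this (or an equivalent device) your argument loses a fixed positive power of $\rho^{-1}$.

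On the other hand, the ``main obstacle'' you flag --- the three natural constancy scales of $|E_{R'_i}g|$ versus the two-scale plates of Lemma \ref{0720lemma3.5h} --- is not where the difficulty lies. The functions fed into the Kakeya inequality are not $|E_{R'_i}g|^{q_0}$ but the local averages $x\mapsto\|E_{R'_i}g\|^{q_0}_{L^{q_0}_\#(w_{B(x,\rho^{-l})})}$; these are essentially constant at scale $\rho^{-l}$ in all directions (by the averaging) and at scale $\rho^{-(l+1)}$ in the directions transverse to $W^{(l)}(r_i,s_i)$ (by the Fourier support information you describe). Since $\rho^{-(l+1)/2}\le\rho^{-l}$ for $l\ge1$, each such constancy box is a union of finitely overlapping admissible plates, so the two-scale formulation of Lemma \ref{0720lemma3.5h} applies directly; the real crux is the counting-factor bookkeeping described above.
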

\begin{proof}
The proof of this lemma is essentially the same as the proof of Theorem 6.6. in  \cite{BDG}. The constraint $p\ge \frac{16n}{3l(l+3)}$, which is the same as $\frac{l(l+3)p}{2n}\ge \frac{8}{3}$, corresponds to the constraint $p\ge 2n$ from Theorem 6.6. in \cite{BDG}. Under this constraint, one can apply H\"older's inequality
\beq
\label{erufguyrtgrt8g89rt07grt8wer00wfqe=dpqeopfifgi}
\begin{split}
& \Big( \sum_{\substack{R'_i \text{ square in } R_i\\ l(R'_i)=\rho}} \|E_{R'_i}g\|_{L^{\frac{l(l+3)p}{2n}}_{\#}(w_{\Delta})}^\frac{8}{3} \Big)^{\frac{3}{8}} \\
& \lesim (\#(R_i))^{\frac{3}{8}-\frac{2n}{l(l+3)p}}\Big( \sum_{\substack{R'_i \text{ square in } R_i\\ l(R'_i)=\rho}} \|E_{R'_i}g\|_{L^{\frac{l(l+3)p}{2n}}_{\#}(w_{\Delta})}^{\frac{l(l+3)p}{2n}} \Big)^{\frac{2n}{l(l+3)p}}.
\end{split}
\endeq
Here $\#(R_i)$ denotes the number of squares $R_i'$ inside $R_i$. Rather than redoing the rest of the argument, we invite the reader to take this as an exercise, upon reading the proof of Theorem 6.6 in  \cite{BDG}.

\end{proof}
This inequality will be used with $p$ very close to $20$.
The difference between our lemma and Theorem 6.6. in \cite{BDG} is rather subtle. The choice for the Lebesgue index $\frac{l(l+3)p}{2n}$ is not negotiable due to the nature of the argument. But there is some freedom in choosing the exponent $\frac83$. Let us explain.  In \cite{BDG} this exponent is chosen to be 2, because in the one dimensional case an $l^2L^p$ decoupling is proved. More precisely, the following is proved in \cite{BDG} for the twisted cubic $\mc{S}_{1, 3}$ at the critical exponent $p=12$
$$V^{(1,2)}_{12,2}(\delta)\lesssim_\epsilon \delta^{-\epsilon}.$$
The analogous inequality for $\mc{S}_{2, 3}$ at the critical exponent $p=20$
\begin{equation}
\label{abc16}
V^{(2,3)}_{20,2}(\delta)\lesssim_\epsilon \delta^{-\epsilon}
\end{equation}
is false. This is because $\mc{S}_{2, 3}$ contains the parabola $\mc{S}_{1, 2}$, whose critical index is $p=6$. The validity of \eqref{abc16} would force the estimate
$$V^{(1,2)}_{20,2}(\delta)\lesssim_\epsilon \delta^{-\epsilon},$$
which is known to be false ($20>6$).

Since we are eventually proving an $l^{20} L^{20}$ decoupling for $\mc{S}_{2, 3}$, one may wonder why not  use the index $q=20$ instead of $\frac83$ in \eqref{0706e2.12}. Recall that we will use  \eqref{0706e2.12} with $p$ (arbitrarily) close to $20$. The index $q$ that we use in place of $8/3$ needs to satisfy the restriction $\frac{l(l+3)p}{2n}\ge q$, in order for the proof of Lemma \ref{main1} to work. Indeed, this restriction allows for the critical use of H\"older's inequality in \eqref{erufguyrtgrt8g89rt07grt8wer00wfqe=dpqeopfifgi}. Plugging in the worst case scenario $l=1$, $n=9$, $p=20$ leads to the restriction
$\frac{40}{9}\ge q.$
On the other hand, it will become clear  that we need\footnote{In short, this is the restriction that appears in Theorem \ref{0626theorem1.3} } $q\ge \frac83$.
We could have thus made any choice $q\in[\frac83,\frac{40}{9}]$. We decided to work with $q=\frac83$ for no particular reason.

\subsection{Linear vs multilinear decoupling}\label{2405sub3.2}
Throughout the rest of the argument, we  will simplify notation and will write $V_p(\delta)$ for $V^{(2, 3)}_{p, p}(\delta)$, and also just $E$ for $E^{(2, 3)}$.

In this section we will recall a useful result from \cite{BD2}. Let us first introduce a multilinear version of the decoupling inequality \eqref{0709e2.3}. Recall $\Lambda$ from  Lemma \ref{0711cor1.4}. For $K$ large enough we denote by $V_{p}(\delta, K)$ the smallest constant such that
\beq\label{0720e3.20h}
\|(\prod_{i=1}^{\Lambda K} E_{R_i} g)^{1/\Lambda K}\|_{L^p(w_B)}\le V_{p}(\delta, K) \prod_{i=1}^{\Lambda K} (\sum_{R'\subset R_i:\; l(R'_i)=\delta} \|E_{R'_i} g\|_{L^p(w_B)}^{p} )^{\frac{1}{p\Lambda K}}.
\endeq
holds true for all distinct squares $R_i\in Col_K$, each ball  $B\subset \R^9$ of radius $\delta^{-3}$, and each $g:[0,1]^2\to\C$.
Next we recall Theorem 8.1 from \cite{BD2}.

\begin{thm}(\cite{BD2})
\label{abc37}
For each $p\ge 2$ and $K\in \N$, there exists $\Omega_{K, p}>0$ and $\beta(K, p)>0$ with
\beq
\lim_{K\to \infty} \beta(K, p)=0, \text{ for each }p,
\endeq
such that for each small enough $\delta$, we have
\beq
V_p(\delta)\le \delta^{-\beta(K, p)-2(\frac{1}{2}-\frac{1}{p})}+ \Omega_{K, p} \log_K \big(\frac{1}{\delta} \big)\max_{\delta \le \delta'\le 1}(\frac{\delta'}{\delta})^{2(\frac{1}{2}-\frac{1}{p})} V_p(\delta', K).
\endeq
\end{thm}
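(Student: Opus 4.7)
The plan is to mimic the Bourgain--Guth scheme that was used for Proposition \ref{2903p3.2}, specialized to the surface $\mc{S}_{2,3}\subset\R^9$ and to the $\ell^p L^p$ regime. First I would prove the one-step inequality
\beq\label{onestep}
\|E_{[0,1]^2}g\|_{L^p(w_{B_{K^3}})}\lesim_{p,\epsilon} K^{2(\frac12-\frac1p)+\epsilon}\Big(\sum_{R\in Col_K}\|E_Rg\|_{L^p(w_{B_{K^3}})}^p\Big)^{1/p}+K^{100}V_p(K^{-1},K)\Big(\sum_{R\in Col_K}\|E_Rg\|_{L^p(w_{B_{K^3}})}^p\Big)^{1/p},
\endeq
and then iterate it using parabolic rescaling (Lemma \ref{abc18}) to pass from scale $K^{-1}$ down to scale $\delta$, exactly as in Section 8 of \cite{BD2}. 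The $\log_K(1/\delta)$ in the statement will simply count the number of iterations, and the exponent $2(\tfrac12-\tfrac1p)$ inside the max in the statement will come from parabolic rescaling applied to the first summand of \eqref{onestep}.

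To prove \eqref{onestep} I would partition $[0,1]^2$ into $K$-squares $R$ and fix a ball $B_K$ of radius $K$ inside $B_{K^3}$; by the uncertainty principle $|E_Rg|$ is essentially a constant $|E_Rg(B_K)|$ on $B_K$. Let $R^\ast$ realize the maximum of these constants, and set $Col_K^\ast = \{R:|E_Rg(B_K)|\ge K^{-2}|E_{R^\ast}g(B_K)|\}$. Run the following algorithm. \textbf{Case 1:} if $|Col_K^\ast|\le \Lambda K$ then on $B_K$ we have $|E_{[0,1]^2}g|\lesim \Lambda K\max_R|E_Rg(B_K)|$, which after integration and H\"older (using $|Col_K^\ast|\le \Lambda K$) feeds into the first term of \eqref{onestep} with the factor $K^{2(\frac12-\frac1p)}$ via a trivial decoupling. \textbf{Case 2:} if $|Col_K^\ast|\ge \Lambda K$ and no polynomial $Q$ of degree $\le 100$ has its $10/K$-neighborhood meeting too many squares in $Col_K^\ast$, then by Lemma \ref{0711cor1.4} the collection $Col_K^\ast$ is $\nu_K$-transverse and we invoke the definition of $V_p(K^{-1},K)$ to produce the second term of \eqref{onestep}. \textbf{Case 3:} if there is such a polynomial $Q$, let $\mc{G}_K\subset Col_K^\ast$ consist of those squares in the $10/K$-neighborhood of $\{Q=0\}$. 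We peel off $\mc{G}_K$, use the trivial (H\"older) decoupling on this subfamily, feeding it into the first term of \eqref{onestep}, and repeat the algorithm on $Col_K^\ast\setminus\mc{G}_K$.

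The algorithm terminates in $O(\log K)$ steps since each pass through Case 3 removes at least a fifth of the squares in $STOCK$; the logarithmic loss is absorbed into $K^\epsilon$. Summing the $L^p$ estimate in $p$-th power over a finitely overlapping cover of $B_N$ by balls $B_{K^3}$ and applying Minkowski's inequality (which is allowed since $p=q$ here, no restriction) concludes \eqref{onestep}. The final iteration produces the declared bound: each parabolic rescaling contributes the factor $(\delta'/\delta)^{2(\frac12-\frac1p)}$ in front of $V_p(\delta',K)$, while the accumulated trivial contributions combine into the clean term $\delta^{-\beta(K,p)-2(\frac12-\frac1p)}$, with $\beta(K,p)\to 0$ since it scales like $(\log K^\epsilon)/\log K = \epsilon$.

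The main obstacle is the lower-dimensional contribution in Case 3: here one must verify that \emph{trivial} decoupling on the squares inside the $10/K$-neighborhood of a degree $\le 100$ curve is sufficient to produce the stated exponent $-2(\frac12-\frac1p)-\beta(K,p)$. Because the statement only aims at a trivial rate, one can afford to lose the full factor $K^{2(\frac12-\frac1p)}$ from H\"older applied across the $O(K)$ surviving squares near the zero set of $Q$ (in contrast to Section \ref{abcsec4}, where a sharper parabolic decoupling was required because the target exponent was subtrivial). Establishing that Lemma \ref{0711cor1.4} applies with the degree threshold $100$ used in Definition \ref{0706defi2.2} is the other point to check, but this is immediate from the way $\nu_K$ was constructed via a compactness argument. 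Once these are in place, the remainder of the argument is essentially a transcription of Section 8 of \cite{BD2}.
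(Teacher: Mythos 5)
Your overall architecture (Bourgain--Guth decomposition on balls $B_K$, the three-case algorithm, iteration via parabolic rescaling, $\log_K(1/\delta)$ counting the stopping scales) is the same as in the argument from \cite{BD2} that the paper invokes, but the quantitative treatment of the non-transverse terms is wrong in a way that cannot be absorbed. Plain triangle inequality plus H\"older over $N$ pieces gives $\|\sum_{i=1}^N f_i\|_{L^p}\le N^{1-\frac1p}(\sum_i\|f_i\|_{L^p}^p)^{1/p}$; applied to the $O(K)$ squares of $Col_K$ meeting the $10/K$-neighborhood of $\{Q=0\}$ this yields a factor $K^{1-\frac1p}$, \emph{not} the factor $K^{2(\frac12-\frac1p)}=K^{1-\frac2p}$ you claim, and in your Case 1 the bound $\Lambda K\max_R|E_Rg(B_K)|$ followed by H\"older loses a full factor $K$. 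The excess $K^{\frac1p}$ per step accumulates under the iteration to $\delta^{-\frac1p}$, so your first term comes out as $\delta^{-(1-\frac1p)-o(1)}$ rather than $\delta^{-2(\frac12-\frac1p)-\beta(K,p)}$; since $\frac1p$ does not shrink as $K\to\infty$ it cannot be hidden in $\beta(K,p)$, and for $p$ near $20$ the exponent $1-\frac1p>\frac9{10}$ would destroy the bootstrap in Section \ref{2405sub3.3}, where it is essential that the first term sits at the critical exponent. The missing ingredient is the correct form of ``trivial decoupling'' for frequency-separated pieces: interpolating $L^2$ orthogonality (the Fourier supports of the $E_Rg$ are finitely overlapping) with the triangle inequality gives $\|\sum_{i\le N}f_i\|_{L^p(w_{B_K})}\lesssim N^{1-\frac2p}(\sum_i\|f_i\|^p_{L^p(w_{B_K})})^{1/p}$, and with $N\lesssim K$ (Case 1 with $|STOCK|\le\Lambda K$, Case 3 via the Wongkew bound underlying Lemma \ref{0711cor1.4}) this is exactly $K^{2(\frac12-\frac1p)}$ up to absolute constants. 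This is precisely the point where $q=p$ is used; for $q<p$ this cheap bound is no longer affordable, which is why Theorem \ref{2903thm3.1} required the parabola decoupling instead.

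A second, structural problem is your multilinear term. By definition \eqref{0720e3.20h}, $V_p(K^{-1},K)$ decouples transverse $K$-squares into squares of the \emph{same} side length $K^{-1}$, so it is an $O(1)$ quantity, and your one-step inequality, posed on $B_{K^3}$ with the sum over $Col_K$, cannot generate the term $\max_{\delta\le\delta'\le1}(\delta'/\delta)^{2(\frac12-\frac1p)}V_p(\delta',K)$ upon iteration: after the transverse case you would still have to decouple each $K$-square linearly, which is circular. The transverse case must be terminal at the full remaining scale: on a ball of radius $\delta^{-3}$ the third term should be $K^{O(1)}V_p(\delta,K)\big(\sum_{l(\Delta)=\delta}\|E_\Delta g\|_{L^p(w_B)}^p\big)^{1/p}$, in analogy with the third term of \eqref{abc3}; then, when transversality first occurs inside a square of side $\sigma=\delta/\delta'$, rescaling produces $V_p(\delta',K)$ while the accumulated losses from the earlier steps give the prefactor $(\delta'/\delta)^{2(\frac12-\frac1p)+\beta}$. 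With these two corrections your scheme does reduce to the argument of Theorem 8.1 in \cite{BD2}, which is all the paper relies on.
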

Note that this result is the analog of Theorem \ref{2903thm3.1} proved earlier in the paper. We briefly recall the argument from \cite{BD2}. One needs to deal with a lower dimensional term and with a multilinear transverse term.  Since we are dealing with $l^pL^p$ decouplings, the contribution of the lower dimensional term can be cheaply estimated using a trivial decoupling. This is unlike the case of Theorem \ref{2903thm3.1}, where a more sophisticated decoupling was needed.


\subsection{The proof of \eqref{3004e2.3}}\label{2405sub3.3}

Fix $\delta<1$ and $K\ge2$ for a while. Fix  also $\Lambda K$ squares $R_j\in Col_K$, with $\Lambda$ from  Lemma \ref{0711cor1.4}.

For a positive number $r$, we use $B^r$ to denote a ball of radius $\delta^{-r}$.  Define
\beq\label{0720g3.20}
D_t (q, B^r):=\Big( \prod_{i=1}^{\Lambda K} \sum_{R_{i, q}\subset R_i}\|E_{R_{i, q}} g\|^{\frac{8}{3}}_{L^t_{\#}(w_{B^r})} \Big)^{\frac{3}{8\Lambda K}}
\endeq
In the notation $R_{i, q}$, the index $i$ indicates that this square lies in $R_i$, and $q$ indicates that the square $R_{i, q}$ has side length $\delta^{q}$.

 Note that we use an $l^{\frac{8}{3}}$ instead of an $l^2$ summation. This is a subtle and significant departure from the  Bourgain--Demeter--Guth argument in \cite{BDG}.  The choice of $\frac83$ is made to match the exponent from Lemma \ref{main1}.

For $r>s$, we denote
\beq
A_p (q, B^r, s)=\Big( \frac{1}{|\mc{B}_s(B^r)|} \sum_{B^s\in \mc{B}_s(B^r)} D_2(q, B^s)^p \Big)^{1/p}.
\endeq
Here $\mc{B}_s(B^r)$ denotes a finitely overlapping cover of $B^r$ using balls $B^s$.
\bigskip

We will use the following rather immediate consequence of Minkowski's and H\"older's inequalities.
 \begin{lem}\label{0810lemma6.7h}
 \label{abc31}
 Let $\mc{B}$ be a finitely overlapping cover of a ball $B$ by smaller balls $B'$, in other words
 $$1_B\le \sum_{B'\in\mc{B}}1_{B'}\lesssim 1_B.$$
 Then for each $p\ge \frac83$
 \beq
 \label{abc34}
 \frac1{|\mc{B}|}\sum_{B'\in\mc{B}}D_p(q,B')^p\lesssim D_p(q,B)^p.\endeq
 Also
 \beq
 \label{abc35}
 A_p(q,B^r,s)\lesssim D_p(q,B^r).\endeq \end{lem}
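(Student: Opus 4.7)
The plan is to derive \eqref{abc34} first, since \eqref{abc35} will follow from it by a trivial embedding of $L^2_\sharp$ into $L^p_\sharp$. Throughout, write
$$Y_i(B')=\Big(\sum_{R_{i,q}\subset R_i}\|E_{R_{i,q}}g\|_{L^p_\sharp(w_{B'})}^{8/3}\Big)^{3/8},$$
so that $D_p(q,B')^p=\prod_{i=1}^{\Lambda K}Y_i(B')^{p/(\Lambda K)}$.

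The first step is to exchange the average $\frac1{|\mc{B}|}\sum_{B'\in\mc{B}}$ with the geometric mean over $i$. This is H\"older's inequality (or equivalently AM--GM) with exponents $\Lambda K$ on each of the $\Lambda K$ factors:
$$\frac1{|\mc{B}|}\sum_{B'\in\mc{B}}D_p(q,B')^p=\frac1{|\mc{B}|}\sum_{B'\in\mc{B}}\prod_{i=1}^{\Lambda K}\bigl(Y_i(B')^{p}\bigr)^{1/(\Lambda K)}\le \prod_{i=1}^{\Lambda K}\Bigl(\frac1{|\mc{B}|}\sum_{B'\in\mc{B}}Y_i(B')^{p}\Bigr)^{1/(\Lambda K)}.$$
Thus it suffices to prove, for each fixed $i$, that $\frac1{|\mc{B}|}\sum_{B'}Y_i(B')^{p}\lesssim Y_i(B)^{p}$.

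The second step uses Minkowski's inequality in the regime $\tfrac{3p}{8}\ge 1$ (this is exactly where the hypothesis $p\ge \tfrac83$ enters). Writing $a_k(B')=\|E_{R_{i,q}}g\|_{L^p_\sharp(w_{B'})}^{8/3}$ as a function of $B'\in\mc{B}$, Minkowski's inequality in $L^{3p/8}(\mc{B})$ applied to the sum $\sum_k a_k(\cdot)$ yields
$$\Big(\frac1{|\mc{B}|}\sum_{B'}Y_i(B')^p\Big)^{8/(3p)}=\Big\|\sum_k a_k\Big\|_{L^{3p/8}(\frac{1}{|\mc{B}|}\#)}\le \sum_k\Big(\frac1{|\mc{B}|}\sum_{B'}a_k(B')^{3p/8}\Big)^{8/(3p)}.$$
A direct computation, using that $|\mc{B}|\cdot|B'|\sim |B|$ because $\mc{B}$ is a finitely overlapping cover of $B$, and that $\sum_{B'\in\mc{B}}w_{B'}\lesssim w_B$, gives
$$\frac1{|\mc{B}|}\sum_{B'}a_k(B')^{3p/8}=\frac1{|\mc{B}|\cdot|B'|}\int|E_{R_{i,q}}g|^{p}\sum_{B'}w_{B'}\lesssim \frac1{|B|}\int|E_{R_{i,q}}g|^{p}w_B=\|E_{R_{i,q}}g\|_{L^p_\sharp(w_B)}^{p}.$$
Taking the $8/(3p)$-th power and summing over $k$ recovers exactly $Y_i(B)^{8/3}$, so raising to the power $3p/8$ yields $\frac1{|\mc{B}|}\sum_{B'}Y_i(B')^p\lesssim Y_i(B)^p$. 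Combined with the H\"older step, this proves \eqref{abc34}.

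For \eqref{abc35}, since $p\ge 2$, the inclusion $L^2_\sharp(w_{B^s})\subset L^p_\sharp(w_{B^s})$ for averaged norms gives $\|E_{R_{i,q}}g\|_{L^2_\sharp(w_{B^s})}\le \|E_{R_{i,q}}g\|_{L^p_\sharp(w_{B^s})}$ termwise, hence $D_2(q,B^s)\le D_p(q,B^s)$. Therefore
$$A_p(q,B^r,s)^p=\frac1{|\mc{B}_s(B^r)|}\sum_{B^s}D_2(q,B^s)^p\le \frac1{|\mc{B}_s(B^r)|}\sum_{B^s}D_p(q,B^s)^p\lesssim D_p(q,B^r)^p,$$
the last step being \eqref{abc34} applied to the cover of $B^r$ by the balls $B^s$. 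The main (minor) obstacle is the bookkeeping with the weights $w_{B'}$, but the bound $\sum_{B'}w_{B'}\lesssim w_B$ for a finitely overlapping cover is a standard consequence of the polynomial decay of $w_{B'}$, with the constants absorbed in the $\lesssim$.
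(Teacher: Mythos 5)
Your proof is correct and follows essentially the same route as the paper's: H\"older to handle the geometric mean over $i$, the triangle (Minkowski) inequality in $l^{3p/8}$ (which is exactly where $p\ge\frac83$ enters) together with $\sum_{B'}w_{B'}\lesssim w_B$ for \eqref{abc34}, and then $D_2(q,B^s)\lesssim D_p(q,B^s)$ via H\"older plus \eqref{abc34} for \eqref{abc35}. The only difference is that you spell out the weight and normalization bookkeeping that the paper leaves implicit (and note that the termwise comparison $D_2\le D_p$ should really be $D_2\lesssim D_p$, a harmless constant).
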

 \begin{proof}
 For \eqref{abc34}, apply first the triangle inequality in $l^{\frac{3p}{8}}$ to write for each $i$
 $$\sum_{B'}(\sum_{R_{i, q}\subset R_i}\|E_{R_{i, q}} g\|^{\frac{8}{3}}_{L^p(w_{B'})})^{\frac{3p}{8}}\lesssim (\sum_{R_{i, q}\subset R_i}\|E_{R_{i, q}} g\|^{\frac{8}{3}}_{L^p(w_{B})})^{\frac{3p}{8}}.$$
 Next, the geometric average in $i$ is taken care of by using H\"older.

 Finally,  \eqref{abc35} will follow from \eqref{abc34} and the following consequence of H\"older $$D_2(q,B')\lesssim D_p(q,B').$$

\end{proof}

We will next indicate how to gradually decouple into frequency squares of smaller size at the same time as increasing the size of the spatial balls. There will be two types of iteration, that we will call $r-$iteration and $M-$iteration. We start by describing the overture of the $r-$iteration, which will involve two distinct ball inflations ($l=1$ and $l=2$). We will then show how to perform a typical  step of the iteration, using an $l=2$ ball inflation (more precisely, Lemma \ref{abc22}). The end product of the $r-$iteration will be recorded in inequality  \eqref{0709e2.60k}. We will then proceed with the $M-$iteration, which will lead to Proposition \ref{0712prop1.10}. In the end the argument, we will combine  Proposition \ref{0712prop1.10} with  Theorem \ref{abc37} to finalize the proof of the estimate $V_{20}(\delta)\lesssim_{\epsilon}\delta^{-\epsilon-2(\frac12-\frac1{20})}$.
\bigskip

The implicit assumption throughout the following calculations is that $p>\frac{72}5$. In reality, we will only need the results for $p$ arbitrarily close to $20$. The implicit constants will depend on $K$, since the squares we are using will be $\nu_K$ transverse.
\medskip

Define $\alpha_1, \alpha_2,\beta_2\in (0,1)$ as follows
$$
\frac{1}{\frac{2p}{9}}=\frac{\alpha_1}{\frac{5p}{9}}+\frac{1-\alpha_1}{2},
$$

$$
\frac{1}{\frac{5p}{9}}=\frac{\alpha_2}{p}+\frac{1-\alpha_2}{8},
$$

$$
\frac{1}{8}=\frac{1-\beta_2}{2}+\frac{\beta_2}{\frac{5p}{9}}.
$$
We will apply the following lemma in each step of the $r-$iteration, with $m$ of the form $2(\frac32)^s$, $s=0,1,2,\ldots$. Note that we start with squares of side length $\delta^{\frac{m}{2}}$ and end up with squares of smaller side length $\delta^{\frac{3m}{2}}$ and $\delta^{\frac{3m}{4}}$. The coefficient $\delta^{-C}$ is the correct one, in the sense that it is a product of only H\"older's inequality.
\begin{lem}
\label{abc22}
For $m\ge 2$ and each ball $B^{m\cdot\frac32}$ we have
$$\Big( \frac{1}{|\mc{B}_m(B^{m\cdot\frac32})|} \sum_{B^m\in \mc{B}_m(B^{m\cdot\frac32})} D_{\frac{5p}{9}}(\frac{m}2, B^m)^p \Big)^{\frac{1}{p}}\lesssim_\epsilon (\frac1\delta)^{\epsilon+(\frac12-\frac38)[\frac{m}{2}(1-\alpha_2)+\frac{3m}{2}(1-\alpha_2)(1-\beta_2)]}\times
$$
$$A_p(\frac{3m}{2}, B^{m\cdot\frac32}, \frac{3m}{2})^{(1-\alpha_2)(1-\beta_2)}D_{\frac{5p}{9}}(\frac{3m}4, B^{m\cdot\frac32})^{\beta_2(1-\alpha_2)}D_p(\frac{m}2,B^{m\cdot\frac32})^{\alpha_2}.$$
\end{lem}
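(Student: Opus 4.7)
The plan is to chain together one ball-inflation, two Hölder interpolations in the Lebesgue index, one $l^{8/3}L^8$ decoupling for $\mc{S}_{2,2}$, and one $L^2$ orthogonality step. Throughout, the fixed $\Lambda K$ transverse squares in $\text{Col}_K$ are the ones from Lemma \ref{0711cor1.4}.

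\textbf{Step 1 (Ball inflation, $l=2$).} First I would apply Lemma \ref{main1} with $l=2$, $n=9$, and $\rho=\delta^{m/2}$, so that $\rho^{-(l+1)}=\delta^{-3m/2}$ is the radius of $B^{3m/2}$, $\rho^{-l}=\delta^{-m}$ is the radius of $B^m$, and $\tfrac{l(l+3)p}{2n}=\tfrac{5p}{9}$. This yields
$$\Big(\frac1{|\mc B_m(B^{3m/2})|}\sum_{B^m}D_{5p/9}(m/2,B^m)^p\Big)^{1/p}\lesssim_\epsilon \delta^{-\epsilon}\,D_{5p/9}(m/2,B^{3m/2}).$$

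\textbf{Step 2 (Interpolation of $L^{5p/9}$).} Next I use $\tfrac{1}{5p/9}=\tfrac{\alpha_2}{p}+\tfrac{1-\alpha_2}{8}$ at each frequency square, then Hölder in the sum over $R_{i,m/2}$ with exponents $1/\alpha_2$ and $1/(1-\alpha_2)$, and finally Hölder in the geometric mean over $i$, to get
$$D_{5p/9}(m/2,B^{3m/2})\le D_p(m/2,B^{3m/2})^{\alpha_2}\,D_8(m/2,B^{3m/2})^{1-\alpha_2}.$$

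\textbf{Step 3 ($\mc S_{2,2}$ decoupling of $D_8$).} For the $D_8$ factor, parabolic rescaling (Lemma \ref{abc18}) reduces the decoupling from squares of side $\delta^{m/2}$ into subsquares of side $\delta^{3m/4}$ on a ball of radius $\delta^{-3m/2}$ to a unit-scale decoupling at ratio $\delta^{m/4}$. Theorem \ref{0626theorem1.3} with $q=8/3$ then gives $V^{(2,2)}_{8,8/3}(\delta^{m/4})\lesssim_\epsilon \delta^{-(m/2)(1/2-3/8)-\epsilon}$. This is a statement for $\mc S_{2,2}$ on balls in $\R^5$; the weighted version of Lemma \ref{abc8} with $\mc M_1=\mc S_{2,2}$ and $\mc M_2=\mc S_{2,3}$ (dominating $w_{B^{3m/2}}$ by the tensor product of two weights on the natural $\R^5\times\R^4$ factorization) transfers it to $\mc S_{2,3}$. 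Raising to the $8/3$ power, summing over $R_{i,m/2}$, and taking the geometric mean gives
$$D_8(m/2,B^{3m/2})\lesssim_\epsilon \delta^{-(1/2-3/8)(m/2)-\epsilon}\,D_8(3m/4,B^{3m/2}).$$

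\textbf{Step 4 (Interpolation of $L^8$).} Applying $\tfrac18=\tfrac{1-\beta_2}{2}+\tfrac{\beta_2}{5p/9}$ exactly as in Step 2 yields
$$D_8(3m/4,B^{3m/2})\le D_2(3m/4,B^{3m/2})^{1-\beta_2}\,D_{5p/9}(3m/4,B^{3m/2})^{\beta_2}.$$

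\textbf{Step 5 ($L^2$ orthogonality with $\ell^{8/3}$ loss).} Since $B^{3m/2}$ has radius $\delta^{-3m/2}$, Plancherel-type orthogonality (in its weighted form) gives
$$\|E_{R_{i,3m/4}}g\|_{L^2_\#(w_{B^{3m/2}})}^2\approx\sum_{R_{i,3m/2}\subset R_{i,3m/4}}\|E_{R_{i,3m/2}}g\|_{L^2_\#(w_{B^{3m/2}})}^2.$$
Raising to the $4/3$ power and applying Hölder $\|a\|_{\ell^1}^{4/3}\le N^{1/3}\|a\|_{\ell^{4/3}}^{4/3}$ with $N=\delta^{-3m/2}$ (the number of $R_{i,3m/2}$ inside each $R_{i,3m/4}$), so $N^{1/3}=\delta^{-m/2}$, then summing over $R_{i,3m/4}$ and taking the geometric mean produces
$$D_2(3m/4,B^{3m/2})\lesssim \delta^{-3m/16}\,D_2(3m/2,B^{3m/2}).$$
Since $\mc B_{3m/2}(B^{3m/2})$ contains $O(1)$ balls, $D_2(3m/2,B^{3m/2})\approx A_p(3m/2,B^{3m/2},3m/2)$.

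Multiplying the five inequalities with the correct powers, and using $(1/2-3/8)(m/2)=m/16$ and $(1/2-3/8)(3m/2)=3m/16$, gives exactly the target exponent $(1/2-3/8)[m/2(1-\alpha_2)+3m/2(1-\alpha_2)(1-\beta_2)]$, together with the overall $\delta^{-\epsilon}$ from Steps 1 and 3.

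\textbf{Main obstacle.} The delicate point is the bookkeeping in Step 5: the loss $\delta^{-m/2}$ coming from the conversion between the $\ell^2$ sum produced by Plancherel and the $\ell^{8/3}$ sum used to define $D_2$ must reproduce the coefficient $3m/2\cdot(1/2-3/8)(1-\alpha_2)(1-\beta_2)$ after being raised by the interpolation exponents of Steps 2 and 4. A secondary technical point is the weighted variant of Lemma \ref{abc8} in Step 3, which forces one to replace the ball in $\R^9$ by a cylindrical product of a $5$- and a $4$-dimensional ball at the level of weights; this is the ``technical variation'' mentioned at the end of Section \ref{abcsec2}.
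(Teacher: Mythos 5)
Your proposal is correct and follows essentially the same route as the paper: ball inflation (Lemma \ref{main1} with $l=2$), Hölder interpolation via $\alpha_2$, the rescaled $l^{8/3}L^8$ decoupling for $\mc{S}_{2,2}$ transferred to $\mc{S}_{2,3}$ through the weighted Lemma \ref{abc8}, Hölder via $\beta_2$, and $L^2$ orthogonality followed by the $\ell^2\to\ell^{8/3}$ Hölder loss $\delta^{-3m/16}$, with the final identification $D_2(\frac{3m}2,B^{m\cdot\frac32})\approx A_p(\frac{3m}2,B^{m\cdot\frac32},\frac{3m}2)$ made explicit. The exponent bookkeeping in your Steps 3 and 5 matches the paper's \eqref{abc29} and \eqref{abc28} exactly.
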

\begin{proof}
First, apply Lemma \ref{main1} with $l=2$
\begin{equation}
\label{abc23}
\Big( \frac{1}{|\mc{B}_m(B^{m\cdot\frac32})|} \sum_{B^m\in \mc{B}_m(B^{m\cdot\frac32})} D_{\frac{5p}{9}}(\frac{m}2, B^m)^p \Big)^{\frac{1}{p}}\lesssim_\epsilon\delta^{-\epsilon}D_{\frac{5p}{9}}(\frac{m}2, B^{m\cdot \frac32}).
\end{equation}
Second, use H\"older's inequality,
\beq\label{3004e2.46}
D_{\frac{5p}{9}}(\frac{m}2, B^{m\cdot\frac32}) \lesim D_{8}(\frac{m}2, B^{m\cdot\frac32})^{1-\alpha_2} D_p(\frac{m}2, B^{m\cdot\frac32})^{\alpha_2}.
\endeq
Third, in order to process the term $D_{8}(\frac{m}2, B^{m\cdot\frac32})$, we invoke decoupling \eqref{0626e1.10} for the lower dimensional manifold $\mc{S}_{2,2}$, with $q=\frac83$. Parabolic rescaling (Lemma \ref{abc18}) shows that for each ball $\Delta^{m\cdot\frac32}\subset \R^5$ with radius $\delta^{-m\cdot\frac32}$ we have
\beq
\label{abc19}
\|E_{R_{i, \frac{m}2}}^{(2,2)}g\|_{L^8_{\#}(w_{\Delta^{m\cdot\frac32}})} \lesim_\epsilon \delta^{-\epsilon-\frac{m}2(\frac{1}{2}-\frac{3}{8})} \Big(\sum_{R_{i, \frac{3m}{4}}\subset R_{i, \frac{m}2}} \|E^{(2,2)}_{R_{i, \frac{3m}{4}}}g\|^{\frac{8}{3}}_{L^8_{\#}(w_{\Delta^{m\cdot\frac32}})} \Big)^{\frac{3}{8}}.
\endeq

We combine \eqref{abc19} with  (a weighted version of) the dimension reduction Lemma \ref{abc8} for $\mc{M}_1=\mc{S}_{2,2}$ and $\mc{M}_2=\mc{S}_{2,3}$, to arrive at the inequality (recall $E=E^{(2,3)}$)
\beq\label{0720e3.35h}
\|E_{R_{i, \frac{m}2}}g\|_{L^8_{\#}(w_{B^{m\cdot\frac32}})} \lesim_\epsilon \delta^{-\epsilon-\frac{m}2(\frac{1}{2}-\frac{3}{8})} \Big(\sum_{R_{i, \frac{3m}{4}}\subset R_{i, \frac{m}2}} \|E_{R_{i, \frac{3m}{4}}}g\|^{\frac{8}{3}}_{L^8_{\#}(w_{B^{m\cdot\frac32}})} \Big)^{\frac{3}{8}}.
\endeq
Note that this is an essentially sharp inequality, for the following reason. The piece of the manifold $\mc{S}_{2,3}$ above the square $R_{i, \frac{m}2}$ will have a ``twist" (the effect of the third order terms $t^3,t^2s,ts^2,s^3$) of order only $O(\delta^{\frac{3m}2})$. To explain this, let us consider the square at the origin,  $R_{i,\frac{m}2}=[0,\delta^{\frac{m}2}]^2$. The twist in this case is the maximum amount by which   $\mc{S}_{2,3}$ differs from the purely quadratic manifold
$$\{(t,s,t^2,ts,s^2,0,0,0,0):\;0\le t,s\le \delta^{\frac{m}{2}}\}.$$
Of course, this  is $O(\delta^{\frac{3m}2})$, as claimed. When considering the extension operators on balls $B^{m\cdot\frac32}$ of radius $\delta^{-m\cdot\frac32}$, this observation combined with the uncertainty principle shows that the piece of the manifold $\mc{S}_{2,3}$ above the square $R_{i, \frac{m}2}$ is indistinguishable from (an affine copy of) $\mc{S}_{2,2}$. Thus, there is no twist to be exploited and the best we can do is use the decoupling theory of the quadratic manifold $\mc{S}_{2,2}$.

Now, \eqref{0720e3.35h} has the following immediate implication
\beq\label{abc29}
D_8(\frac{m}2, B^{m\cdot\frac32})\lesssim_\epsilon \delta^{-\epsilon-\frac{m}2(\frac{1}{2}-\frac{3}{8}) } D_8(\frac{3m}4, B^{m\cdot\frac32}).
\endeq
Using \eqref{abc29}, \eqref{3004e2.46} can be further dominated by
\beq\label{3004e2.48}
\delta^{-\epsilon-\frac{m}2(\frac{1}{2}-\frac{3}{8}) (1-\alpha_2)} D_8(\frac{3m}4, B^{m\cdot\frac32})^{ 1-\alpha_2} D_p(\frac{m}2, B^{m\cdot\frac32})^{ \alpha_2}.
\endeq
Another application of H\"older
$$D_8(\frac{3m}4, B^{m\cdot\frac32})\lesim D_2(\frac{3m}4, B^{m\cdot\frac32})^{1-\beta_2}D_{\frac{5p}{9}}(\frac{3m}4, B^{m\cdot\frac32})^{\beta_2}$$
leads to the new bound for \eqref{3004e2.48}
\begin{equation}
\label{abc26}\delta^{-\epsilon-\frac{m}2(\frac{1}{2}-\frac{3}{8}) (1-\alpha_2)} D_p(\frac{m}2, B^{m\cdot\frac32})^{ \alpha_2}D_2(\frac{3m}4, B^{m\cdot\frac32})^{(1-\alpha_2)(1-\beta_2)}D_{\frac{5p}{9}}(\frac{3m}4, B^{m\cdot\frac32})^{(1-\alpha_2)\beta_2}.
\end{equation}

We leave the terms $D_p(\frac{m}2, B^{m\cdot\frac32})$ and $D_{\frac{5p}{9}}(\frac{3m}4, B^{m\cdot\frac32})$ as they are. However, we need to further process the term $D_2(\frac{3m}{4}, B^{m\cdot\frac32})$.
We first invoke $L^2$ orthogonality to pass to  the smallest frequency scales allowed by the uncertainty principle. Then we use H\"older to write
$$\|E_{R_{i, \frac{3m}4}} g\|_{L^2_{\#}(w_{B^{m\cdot\frac32}})}\lesssim (\sum_{R_{i, \frac{3m}{2}}\subset R_{i,\frac{3m}{4}}} \|E_{R_{i, \frac{3m}2}} g\|_{L^2_{\#}(w_{B^{m\cdot\frac32}})}^2)^{1/2}$$
$$
\le \delta^{-\frac{3m}2(\frac12-\frac38)}(\sum_{R_{i, \frac{3m}{2}}\subset R_{i,\frac{3m}{4}}}\|E_{R_{i, \frac{3m}2}} g\|_{L^2_{\#}(w_{B^{m\cdot\frac32}})}^{\frac83})^{\frac38}.
$$
This of course implies that
\beq
\label{abc28}
D_2(\frac{3m}{4}, B^{m\cdot\frac32})\lesim \delta^{-\frac{3m}2(\frac12-\frac38)}D_2(\frac{3m}{2}, B^{m\cdot\frac32}).
\endeq
Plugging this bound in \eqref{abc26} finishes the proof of the lemma.
\end{proof}

{\bf The overture of the $r-$iteration}

In this step, we will start with
\beq
A_p(1, B^3, 1)=\Big( \frac{1}{|\mc{B}_1(B^3)|} \sum_{B^1\in \mc{B}_1(B^3)} D_2(1, B^1)^p \Big)^{1/p}.
\endeq
Our goal is to connect $A_p(1, B^3, 1)$ with quantities of the form $D_t(q, B^3)$ and $A_p(q, B^3, q)$ for some $q> 1$ and $t=\frac{2p}{9}$ or $\frac{5p}{9}$. The two indices $\frac{2p}{9}$ and $\frac{5p}{9}$ are exactly those from Lemma \ref{main1}, for $l=1$ and $l=2$, respectively. The fact that $q> 1$ means that we will pass from  squares of side length $\delta$ to squares of smaller side length $\delta^q$.

First, by H\"older's inequality, since $p>9$
\beq\label{3004e2.41}
\begin{gathered}
\Big( \frac{1}{|\mc{B}_1(B^3)|} \sum_{B^1\in \mc{B}_1(B^3)} D_2(1, B^1)^p \Big)^{1/p}
\lesim \Big( \frac{1}{|\mc{B}_1(B^3)|} \sum_{B^1\in \mc{B}_1(B^3)} D_{\frac{2p}{9}}(1, B^1)^p \Big)^{1/p}.
\end{gathered}
\endeq

Now, we will perform the first ball inflation. Applying the $l=1$ case of  Lemma \ref{main1} to the right hand side of \eqref{3004e2.41}, we obtain
\beq\label{3004e2.42}
\Big( \frac{1}{|\mc{B}_1(B^3)|} \sum_{B^1\in \mc{B}_1(B^3)} D_{\frac{2p}{9}}(1, B^1)^p \Big)^{1/p}\lesssim_\epsilon\delta^{-\epsilon}\Big( \frac{1}{|\mc{B}_2(B^3)|} \sum_{B^2\in \mc{B}_2(B^3)} D_{\frac{2p}{9}}(1, B^2)^p \Big)^{1/p}.
\endeq
Next, we aim  at performing a second ball inflation, according to Lemma \ref{main1} with $l=2$.
By H\"older's inequality, the right hand side of \eqref{3004e2.42} can be dominated by
\beq\label{3004e2.44}
\Big( \frac{1}{|\mc{B}_2(B^3)|} \sum_{B^2\in \mc{B}_2(B^3)} D_{\frac{5p}{9}}(1, B^2)^p \Big)^{\frac{\alpha_1}{p}} \Big( \frac{1}{|\mc{B}_2(B^3)|} \sum_{B^2\in \mc{B}_2(B^3)} D_{2}(1, B^2)^p \Big)^{\frac{1-\alpha_1}{p}} .
\endeq
The motivation for splitting  $L^{2p/9}$ into $L^2$ and $L^{5p/9}$ is twofold. On the one hand, for the  $L^2$ term   we can use an orthogonality argument to perform a further decoupling, more precisely  to pass from  squares of side length $\delta$ to squares of side length $\delta^2$. Combining with H\"older leads to
$$\|E_{R_{i, 1}} g\|^{\frac{8}{3}}_{L^2_{\#}(w_{B^2})}\lesssim (\sum_{R_{i, 2}\subset R_{i,1}} \|E_{R_{i, 2}} g\|_{L^2_{\#}(w_{B^2})}^2)^{1/2}$$
$$
\le \delta^{-2(\frac12-\frac38)}(\sum_{R_{i, 2}\subset R_{i,1}} \|E_{R_{i, 2}} g\|_{L^2_{\#}(w_{B^2})}^{\frac83})^{\frac38}.
$$
This in turn has the following immediate consequence
$$
D_2(1,B^2) \lesssim \delta^{-2(\frac12-\frac38)}D_2(2,B^2)$$
and thus
\begin{equation}
\label{abc20}
\Big( \frac{1}{|\mc{B}_2(B^3)|} \sum_{B^2\in \mc{B}_2(B^3)} D_{2}(1, B^2)^p \Big)^{\frac{1}{p}}\lesssim \delta^{-2(\frac12-\frac38)}A_p(2,B^3,2).
\end{equation}
On the other hand, for the   $L^{\frac{5p}{9}}$ term in \eqref{3004e2.44} we can apply Lemma \ref{abc22} with $m=2$.
\bigskip

Putting these observations  together, we obtain
\beq\label{3004e2.51}
\begin{split}
& A_p(1, B^3, 1)  \lesim_\epsilon (\frac{1}{\delta})^{\epsilon+2(\frac{1}{2}-\frac{3}{8})(1-\alpha_1)+(\frac{1}{2}-\frac{3}{8})\alpha_1 (1-\alpha_2)+3 (\frac{1}{2}-\frac{3}{8}) \alpha_1 (1-\alpha_2) (1-\beta_2)}\times \\
		& A_p(2, B^3, 2)^{1-\alpha_1} A_p(3, B^3, 3)^{\alpha_1 (1-\alpha_2)(1-\beta_2)}D_{\frac{5p}{9}}(\frac{3}{2}, B^3)^{\alpha_1 (1-\alpha_2)\beta_2} D_p(1, B^3)^{\alpha_1 \alpha_2}.
\end{split}
\endeq
This finishes the overture of the $r-$iteration.
\bigskip

Next, we will repeat the $l=2$ ball-inflation for the term $D_{\frac{5p}{9}}(\frac{3}{2}, B^3)$, which will represent the generic step of the $r-$iteration. Note that so far we have used the $l=2$ ball inflation to increase the radius of the balls from $\delta^{-2}$ to $\delta^{-3}$, that is, the exponent of $\delta^{-1}$ has grown by a multiplicative factor of $3/2$.  In the second step described below, the radius will similarly grow from $\delta^{-3}$ to $\delta^{-9/2}$. Each step of the iteration will increase the exponent by the same factor $3/2$.
\medskip

{\bf The first step of the $r-$iteration}

We will average \eqref{3004e2.51} raised to the power $p$ over a finitely overlapping cover of $B^{\frac{9}{2}}$ using balls $B^3$. Apart from the $\delta$ term, there are four main terms in \eqref{3004e2.51} and their exponents add up to 1
$${1-\alpha_1}+{\alpha_1 (1-\alpha_2)(1-\beta_2)}+{\alpha_1 (1-\alpha_2)\beta_2}+{\alpha_1 \alpha_2}=1$$
This allows us to estimate the sum over the balls $B^3$ using H\"older. Note that the $p$th powers of the terms $A_p$ sum up rather naturally. For the sum of the $p$th powers of the terms $D_p$ we use  Lemma \ref{abc31}. Finally, the $p$th powers of the terms $D_{\frac{5p}{9}}$ are estimated using Lemma \ref{abc22}, this time with $m=3$. We get

\beq
\begin{split}
& A_p(1, B^{\frac{9}{2}}, 1)  \lesim_\epsilon (\frac{1}{\delta})^{\epsilon+2(\frac{1}{2}-\frac{3}{8}) (1-\alpha_1)} \underbrace{(\frac{1}{\delta})^{(\frac{1}{2}-\frac{3}{8}) \alpha_1 (1-\alpha_2)} }_{l^{\83} L^8 \text{ decoupling }} \underbrace{(\frac{1}{\delta})^{\frac{3}{2}(\frac{1}{2}-\frac{3}{8}) \alpha_1 (1-\alpha_2)^2 \beta_2} }_{l^{\83} L^8 \text{ decoupling }}\times\\
& \underbrace{(\frac{1}{\delta})^{3(\frac{1}{2}-\frac{3}{8}) \alpha_1 (1-\alpha_2)(1-\beta_2)}}_{L^2 \text{ orthogonality }} \underbrace{(\frac{1}{\delta})^{\frac{9}{2}(\frac{1}{2}-\frac{3}{8}) \alpha_1 (1-\alpha_2)^2 \beta_2 (1-\beta_2)} }_{L^2 \text{ orthogonality }} \times\\
& A_p(2, B^{\frac{9}{2}}, 2)^{1-\alpha_1} A_p(3, B^{\frac{9}{2}}, 3)^{\alpha_1 (1-\alpha_2)(1-\beta_2)} D_p(1, B^{\frac{9}{2}})^{\alpha_1 \alpha_2}\times\\
& A_p(\frac{9}{2}, B^{\frac{9}{2}}, \frac{9}{2})^{\alpha_1 (1-\alpha_2)^2 \beta_2 (1-\beta_2)} D_p(\frac{3}{2}, B^{\frac{9}{2}})^{\alpha_1 \alpha_2 (1-\alpha_2) \beta_2} \times D_{\frac{5p}{9}}(\frac{9}{4}, B^{\frac{9}{2}})^{\alpha_1 (1-\alpha_2)^2 \beta_2^2}.
\end{split}
\endeq
This finishes the first step of the ball-inflation argument.\\

{\bf The end result of the $r-$iteration}. We repeat the procedure described in the first step $r-1$ times, each time increasing the size of the ball by a factor of $\frac32$. We obtain that for all balls $B$ of radius $\delta^{-2\cdot (\frac{3}{2})^r}$
\beq\label{3004e2.55k}
\begin{split}
& A_p(1, B, 1) \lesim_{\epsilon,r} (\frac{1}{\delta})^{\epsilon+2(\frac{1}{2}-\frac{3}{8})(1-\alpha_1)} \underbrace{\prod_{i=1}^r (\frac{1}{\delta})^{2(\frac{3}{2})^i (\frac{1}{2}-\frac{3}{8}) \alpha_1 (1-\alpha_2) (1-\beta_2) [(1-\alpha_2)\beta_2]^{i-1}}}_{L^2 \text{ orthogonality }}\times \\
& \underbrace{\prod_{i=0}^{r-1} (\frac{1}{\delta})^{(\frac{3}{2})^i (\frac{1}{2}-\frac{3}{8}) \alpha_1 (1-\alpha_2) [(1-\alpha_2)\beta_2]^{i}}}_{l^{\83} L^8 \text{ decoupling }} \times A_p(2, B, 2)^{1-\alpha_1}  D_{\frac{5p}{9}}\Big((\frac{3}{2})^r, B \Big)^{\alpha_1 [(1-\alpha_2)\beta_2]^r}\\
& \left(\prod_{i=1}^r A_p(2 (\frac{3}{2})^i, B, 2 (\frac{3}{2})^i)^{\alpha_1 (1-\alpha_2)(1-\beta_2)[(1-\alpha_2)\beta_2]^{i-1}}\right)\left(\prod_{i=0}^{r-1}D_p((\frac{3}{2})^i, B)^{\alpha_1 \alpha_2 [(1-\alpha_2)\beta_2]^i} \right).
\end{split}
\endeq
Define
\beq\label{0720e3.48h}
\begin{split}
& \gamma_0=1-\alpha_1; \gamma_i=\alpha_1 (1-\alpha_2)(1-\beta_2)[(1-\alpha_2)\beta_2]^{i-1}, \text{ for } 1\le i\le r;\\
& b_i=2\cdot (\frac{3}{2})^i, \text{ for } 0\le i\le r;\\
& \tau_r=\alpha_1 [(1-\alpha_2)\beta_2]^r; \tau_i=\alpha_1 \alpha_2 [(1-\alpha_2)\beta_2]^i, \text{ for }0\le i\le r-1;\\
& w_i= \frac{1-\alpha_2}{2\alpha_2}\tau_i, \text{ for } 0\le i\le r-1.
\end{split}
\endeq
We can write using H\"older
$$D_{\frac{5p}{9}}\Big((\frac{3}{2})^r, B \Big)\lesssim D_{p}\Big((\frac{3}{2})^r, B \Big).$$
With these,  the estimate \eqref{3004e2.55k} becomes
\beq\label{3004e2.57p}
\begin{split}
A_p(1, B, 1)& \lesim_{r,\epsilon} \Big(\prod_{i=0}^r (\frac{1}{\delta})^{\epsilon+(\frac{1}{2}-\frac{3}{8})b_i \gamma_i}\Big)\Big( \prod_{i=0}^{r-1} (\frac{1}{\delta})^{ (\frac{1}{2}-\frac{3}{8}) b_i w_i} \Big)\times  \\ &\Big(\prod_{i=0}^r A_p(b_i, B, b_i)^{\gamma_i} \Big)\Big( \prod_{i=0}^{r}D_p(\frac{b_i}{2}, B)^{\tau_i} \Big)
\end{split}
\endeq

By invoking Lemma \ref{abc31} and H\"older's inequality, here $B$ can in fact be any ball of radius bigger than $\delta^{-2 (\frac{3}{2})^r}$. By renaming the variable $\delta$, we arrive at the following key result.
\begin{prop}
Let $p\ge \frac{72}5.$ Let $u>0$ be a small number such that $u\cdot (\frac{3}{2})^r\le 1$. Then for each ball $B$ of radius $\delta^{-3}$, we have
\beq\label{3004e2.60p}
\begin{split}
A_p(u, B, u)& \lesim_{r,\epsilon} \Big(\prod_{i=0}^r (\frac{1}{\delta})^{\epsilon+u(\frac{1}{2}-\frac{3}{8})b_i \gamma_i}\Big)\Big( \prod_{i=0}^{r-1} (\frac{1}{\delta})^{u(\frac{1}{2}-\frac{3}{8}) b_i w_i} \Big)\times \\
& \Big(\prod_{i=0}^r A_p(ub_i, B, ub_i)^{\gamma_i} \Big)\Big( \prod_{i=0}^{r} D_p(\frac{u b_i}{2}, B)^{\tau_i} \Big).
\end{split}
\endeq
\end{prop}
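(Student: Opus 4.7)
The plan is to obtain \eqref{3004e2.60p} from \eqref{3004e2.57p} by two elementary reductions applied in sequence: first an upgrade of the ball $B$ in \eqref{3004e2.57p} from radius exactly $\delta^{-2(3/2)^r}$ to any larger radius, and second a rescaling of the frequency variable via $\delta\to\delta^u$. No new analytic ingredient beyond \eqref{3004e2.57p} and Lemma \ref{abc31} is needed; the proof is purely organizational.

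For the ball upgrade, I would cover an arbitrary ball $B^\star$ of radius $\ge \delta^{-2(3/2)^r}$ by a finitely overlapping family $\{B_0\}$ of balls $B_0$ of radius exactly $\delta^{-2(3/2)^r}$. Applying \eqref{3004e2.57p} on each $B_0$, raising to the $p$-th power, and averaging over the cover, the left-hand side $\frac{1}{|\{B_0\}|}\sum A_p(1,B_0,1)^p$ collapses to $A_p(1,B^\star,1)^p$ by the telescoping of two finitely overlapping covers (the inner one being the $D_2$-cover inside each $B_0$). On the right, the averages $\frac{1}{|\{B_0\}|}\sum A_p(b_i,B_0,b_i)^p$ collapse to $A_p(b_i,B^\star,b_i)^p$ by the same reasoning, while $\frac{1}{|\{B_0\}|}\sum D_p(b_i/2,B_0)^p \lesssim D_p(b_i/2,B^\star)^p$ by inequality \eqref{abc34} of Lemma \ref{abc31}. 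Applying H\"older in the averaging step is legitimate provided the exponents on the operator-norm factors sum to $1$, i.e.\ $\sum_{i=0}^r\gamma_i+\sum_{i=0}^r\tau_i=1$. This identity is verified directly from \eqref{0720e3.48h}: writing $T:=(1-\alpha_2)\beta_2$ and noting $(1-\alpha_2)(1-\beta_2)+\alpha_2=1-T$, a telescoping sum gives $(1-\alpha_1)+\alpha_1(1-T^r)+\alpha_1 T^r=1$.

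For the rescaling step, I would rename $\delta\mapsto \eta:=\delta^u$ throughout the upgraded version of \eqref{3004e2.57p}. Every frequency side length $\delta^c$ becomes $\eta^c=\delta^{uc}$, so $A_p(1,\cdot,1)$ turns into $A_p(u,\cdot,u)$, $A_p(b_i,\cdot,b_i)$ into $A_p(ub_i,\cdot,ub_i)$, and $D_p(b_i/2,\cdot)$ into $D_p(ub_i/2,\cdot)$; the $(1/\delta)$ prefactors similarly acquire the extra exponent $u$, matching the form of \eqref{3004e2.60p}. The hypothesis $u(3/2)^r\le 1$ yields $2u(3/2)^r\le 2\le 3$, so the target ball $B$ of radius $\delta^{-3}$ is admissible for the rescaled and upgraded inequality, whose admissibility requires radius $\ge \eta^{-2(3/2)^r}=\delta^{-2u(3/2)^r}$. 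Putting the two reductions together produces \eqref{3004e2.60p}.

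The only potential obstacle is the bookkeeping of averages across the three nested scales $\delta^{-3}$, $\delta^{-2u(3/2)^r}$, and $\delta^{-ub_i}$, together with the verification that $\sum_i(\gamma_i+\tau_i)=1$ so that H\"older's inequality closes without a multiplicative loss; once these combinatorial points are in place, nothing further is required.
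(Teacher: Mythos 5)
Your proposal is correct and follows essentially the same route as the paper, which disposes of this proposition in one line ("by invoking Lemma \ref{abc31} and H\"older's inequality... by renaming the variable $\delta$"): upgrade \eqref{3004e2.57p} to larger balls by covering, H\"older with weights $\gamma_i,\tau_i$, collapsing the $A_p$ averages and using \eqref{abc34} for the $D_p$ terms, then substitute $\delta\mapsto\delta^u$ and use $u(\tfrac32)^r\le 1$ to admit balls of radius $\delta^{-3}$. Your explicit verification that $\sum_i\gamma_i+\sum_i\tau_i=1$ is exactly the bookkeeping the paper leaves implicit, and it is carried out correctly.
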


Recall that in the definition of the quantity $D_p$  we have used an $l^{\frac{8}{3}}$ summation. However as we are eventually aiming at proving an  $l^p L^p$ decoupling inequality  (for $p=20$), we also need to introduce the following quantity:
\beq
\tilde{D}_p (q, B^r):=\Big( \prod_{i=1}^{\Lambda K} \sum_{R_{i, q}\subset R_i}\|E_{R_{i, q}} g\|^{p}_{L^p_{\#}(w_{B^r})} \Big)^{\frac{1}{p\Lambda K}}.
\endeq
By invoking H\"older's inequality, we get for $p\ge \frac83$
\beq
\label{abc36}
{D}_p (q, B)\le \delta^{-2q(\frac38-\frac1p)}\tilde{D}_p (q, B).
\endeq
Using this, we can rewrite \eqref{3004e2.60p} as follows
\beq\label{0709e2.60k}
\begin{split}
A_p(u, B, u)& \lesim_{\epsilon,r} \Big(\prod_{i=0}^r (\frac{1}{\delta})^{\epsilon+u(\frac{1}{2}-\frac{3}{8})b_i \gamma_i}\Big)\Big( \prod_{i=0}^{r-1} (\frac{1}{\delta})^{u(\frac{1}{2}-\frac{3}{8}) b_i w_i} \Big)\Big( \prod_{i=0}^{r} (\frac{1}{\delta})^{u(\frac{3}{8}-\frac{1}{p}) b_i \tau_i} \Big)\times \\
& \Big(\prod_{i=0}^r A_p(ub_i, B, ub_i)^{\gamma_i} \Big)\Big( \prod_{i=0}^{r} \tilde{D}_p(\frac{u b_i}{2}, B)^{\tau_i} \Big).
\end{split}
\endeq

This inequality  is ready for the $M-$iteration.
\medskip

{\bf The $M-$iteration}
\medskip

To iterate, we will dominate each  $A_p(ub_i, B, ub_i)$ again by using \eqref{0709e2.60k}. To enable such an iteration, we need to choose $u$ to be even smaller. Let $M$ be a large integer. Choose $u$ such that
\beq
[2(\frac{3}{2})^r]^M u\le 2.
\endeq
 This allows us to iterate \eqref{0709e2.60k} $M$ times. When iterating \eqref{0709e2.60k}, we always need to carry the original $\tilde{D}_p$-terms. However, there is some simplification that one can make. We bound the power of $\frac{1}{\delta}$ by
\beq
\left( \sum_{i=0}^{\infty} u (\frac{1}{2}-\frac{3}{8}) b_i \gamma_i\right)+\left( \sum_{i=0}^{\infty} u (\frac{1}{2}-\frac{3}{8}) b_i w_i \right)+\left( \sum_{i=0}^{\infty} u (\frac{3}{8}-\frac{1}{p}) b_i \tau_i \right).
\endeq
By a direct calculation,
\beq
\sum_{j=0}^{\infty}b_j \gamma_j=\frac{6(13p-216)}{5p^2-94p+144},
\endeq
\beq
\sum_{j=0}^{\infty} b_j w_j=\frac{32(p-9)}{5p^2-94p+144},
\endeq
and
\beq
\sum_{j=0}^{\infty} b_j \tau_j=\frac{2 (648 - 117 p + 5 p^2)}{144 - 94 p + 5 p^2}.
\endeq
If we define
\beq
\lambda_0:= (\frac{1}{2}-\frac{3}{8}) \left( \frac{6(13p-216)}{5p^2-94p+144}+\frac{32(p-9)}{5p^2-94p+144}\right)+(\frac{3}{8}-\frac{1}{p})\left(\frac{2 (648 - 117 p + 5 p^2)}{144 - 94 p + 5 p^2}\right),
\endeq
then \eqref{0709e2.60k} can be rewritten as follows
\beq\label{0405e2.45}
\begin{split}
A_p(u, B, u) \lesim_{r,\epsilon} \delta^{-\epsilon-u \lambda_0}  \Big(\prod_{i=0}^r A_p(ub_i, B, ub_i)^{\gamma_i} \Big)\Big( \prod_{i=0}^{r} \tilde{D}_p(\frac{u b_i}{2}, B)^{\tau_i} \Big).
\end{split}
\endeq
Now we iterate the above estimate  $M$ times, and obtain
\beq\label{0405e2.46}
\begin{split}
A_p(u, B, u)& \lesim_{\epsilon,r,M} \delta^{-u \lambda_0-\epsilon} \left(\prod_{j_1=0}^{r} \delta^{-u \lambda_0 b_{j_1}\gamma_{j_1}} \right)\times  \\ & \ldots\\& \left(\prod_{j_1=0}^{r}\prod_{j_2=0}^{r}... \prod_{j_{M-1}=0}^{r} \delta^{-u \lambda_0 b_{j_1}b_{j_2}... b_{j_{M-1}}\gamma_{j_1}\gamma_{j_2}... \gamma_{j_{M-1}}} \right)\times \\ &
\left( \prod_{j_1=0}^{r} \tilde{D}_p(\frac{u}{2}\cdot  b_{j_1}, B)^{\tau_{j_1}}\right) \left( \prod_{j_1=0}^{r} \prod_{j_2=0}^{r} \tilde{D}_p(\frac{u}{2} \cdot  b_{j_1}b_{j_2}, B)^{\tau_{j_1}\gamma_{j_2}}\right)\times \\
				& \dots \\
				& \left( \prod_{j_1=0}^{r} \prod_{j_2=0}^{r}\dots \prod_{j_M=0}^{r} \tilde{D}_p(\frac{u}{2}\cdot b_{j_1}b_{j_2}... b_{j_M}, B)^{\tau_{j_1}\gamma_{j_2}... \gamma_{j_M}}\right)\times \\
				& \left( \prod_{j_1=0}^{r} \prod_{j_2=0}^{r}\dots \prod_{j_M=0}^{r} A_p(u\cdot b_{j_1}b_{j_2}... b_{j_M}, B, u\cdot b_{j_1}b_{j_2}... b_{j_M})^{\gamma_{j_1}\gamma_{j_2}... \gamma_{j_M}}\right).
\end{split}
\endeq

We start to process the long product \eqref{0405e2.46}. We will divide it into three steps. In the first step, we collect all the powers of $\frac{1}{\delta}$. In the second, we use a rescaling argument to handle all the $\tilde{D}_p$-terms. In the last step, we deal with the remaining $A_p$-terms.\\

\noindent {\it Collecting the powers of $\frac{1}{\delta}$.} We obtain
\beq
\begin{split}
& u\lambda_0 + u\lambda_0 (\sum_{j=0}^r b_j \gamma_j)+\dots + u \lambda_0 (\sum_{j=0}^r b_j \gamma_j)^{M-1}\\
& = u \lambda_0\cdot  \frac{1-(\sum_{j=0}^r b_j \gamma_j)^M}{1-(\sum_{j=0}^r b_j \gamma_j)}.
\end{split}
\endeq

\noindent {\it The contribution from the $\tilde{D}_p$-terms.} By  parabolic rescaling (Lemma \ref{abc18}), the product of all these $\tilde{D}_p$-terms can be controlled by
\beq
\begin{split}
&\left( \prod_{j_1=0}^r V_p(\delta^{1-\frac{u}{2}b_{j_1}})^{\tau_{j_1}} \tilde{D}_p(1, B)^{\tau_{j_1}}\right) \times \left(\prod_{j_1=0}^r\prod_{j_2=0}^r V_p(\delta^{1-\frac{u}{2}b_{j_1}b_{j_2}})^{\tau_{j_1}\gamma_{j_2}} \tilde{D}_p(1, B)^{\tau_{j_1}\gamma_{j_2}} \right)\\
&\times \dots \times \left(\prod_{j_1=0}^r\prod_{j_2=0}^r\dots \prod_{j_M=0}^r V_p(\delta^{1-\frac{u}{2}b_{j_1}b_{j_2}\dots b_{j_M}})^{\tau_{j_1}\gamma_{j_2}\dots \gamma_{j_M}} \tilde{D}_p(1, B)^{\tau_{j_1}\gamma_{j_2}\dots \gamma_{j_M}} \right)\\
& \lesim \left( \prod_{j_1=0}^r V_p(\delta^{1-\frac{u}{2}b_{j_1}})^{\tau_{j_1}} \right) \times \left(\prod_{j_1=0}^r\prod_{j_2=0}^r V_p(\delta^{1-\frac{u}{2}b_{j_1}b_{j_2}})^{\tau_{j_1}\gamma_{j_2}} \right) \times \dots\\
&  \times \left(\prod_{j_1=0}^r\prod_{j_2=0}^r\dots \prod_{j_M=0}^r V_p(\delta^{1-\frac{u}{2}b_{j_1}b_{j_2}\dots b_{j_M}})^{\tau_{j_1}\gamma_{j_2}\dots \gamma_{j_M}}  \right) \Big(\tilde{D}_p(1, B)\Big)^{1-(\sum_{j=0}^r \gamma_j)^M}
\end{split}
\endeq

\noindent {\it The contribution from the $A_p$-term.} By invoking \eqref{abc35} and \eqref{abc36} this term can be bounded by
\beq
\prod_{j_1=0}^r \dots \prod_{j_M=0}^r (\frac{1}{\delta})^{2ub_{j_1}... b_{j_M} \gamma_{j_1}... \gamma_{j_M}}\left[  \tilde{D}_p(b_{j_1}\dots b_{j_M}u, B) \right]^{\gamma_{j_1}\dots \gamma_{j_M}}.
\endeq
To control the $\tilde{D}_p$ term, we again invoke the parabolic rescaling, and bound the last expression by
\beq
(\frac{1}{\delta})^{2u(\sum_{j=0}^r b_j \gamma_j)^M}\prod_{j_1=0}^r \dots \prod_{j_M=0}^r \Big( V_p(\delta^{1-ub_{j_1}\dots b_{j_M}}) \Big)^{\gamma_{j_1}\dots \gamma_{j_M}} \Big( \tilde{D}_{p}(1, B) \Big)^{\gamma_{j_1}\dots \gamma_{j_M}}.
\endeq
We summarize our findings so far as follows, recalling that the implicit constant also depends on $K$
\begin{prop}\label{0712prop1.10}
For each $p>\frac{72}{5}$ , for each ball $B$ of radius $\delta^{-3}$, and for each sufficiently small $u$, we have
\beq\label{3004e2.77}
\begin{split}
& A_p(u, B, u)\lesim_{\epsilon,r,M,K} (\frac{1}{\delta})^{\epsilon+u \lambda_0\cdot  \frac{1-(\sum_{j=0}^r b_j \gamma_j)^M}{1-(\sum_{j=0}^r b_j \gamma_j)}+2u(\sum_{j=0}^r b_j \gamma_j)^M} \tilde{D}_p(1, B)\times  \left( \prod_{j_1=0}^r V_p(\delta^{1-\frac{u}{2}b_{j_1}})^{\tau_{j_1}} \right) \times \\
 & \left(\prod_{j_i=0}^r\prod_{j_2=0}^r V_p(\delta^{1-\frac{u}{2}b_{j_1}b_{j_2}})^{\tau_{j_1}\gamma_{j_2}} \right) \times \dots \times \left(\prod_{j_1=0}^r\prod_{j_2=0}^r\dots \prod_{j_M=0}^r V_p(\delta^{1-\frac{u}{2}b_{j_1}b_{j_2}\dots b_{j_M}})^{\tau_{j_1}\gamma_{j_2}\dots \gamma_{j_M}}  \right)\\
 &  \left(\prod_{j_1=0}^r \dots \prod_{j_M=0}^r  \Big( V_p(\delta^{1-ub_{j_1}\dots b_{j_M}} )\Big)^{\gamma_{j_1}\dots \gamma_{j_M}} \right) .
\end{split}
\endeq
\end{prop}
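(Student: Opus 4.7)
The plan is to combine the $r$-iteration endpoint \eqref{0709e2.60k} with $M$ further rounds of self-iteration, and then compress the resulting product using parabolic rescaling (Lemma \ref{abc18}), the averaging Lemma \ref{abc31}, and the H\"older conversion \eqref{abc36}. Most of the ingredients are already set up in the exposition preceding the proposition; the task is essentially careful bookkeeping.

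First I would fix $u>0$ small enough that $[2(3/2)^r]^M u \le 2$, so that every scale $u\,b_{j_1}\cdots b_{j_k}$ encountered along the iteration remains in the admissible range for \eqref{0709e2.60k}. I would then apply \eqref{0709e2.60k} to $A_p(u,B,u)$, then to each of the $A_p(u b_{j_1},B,u b_{j_1})$ factors it produces, and so on for $M$ rounds in total. This yields the nested product \eqref{0405e2.46}: an outer $\delta^{-1}$-power, a cascade of $\tilde D_p$-factors at intermediate scales $\tfrac{u}{2} b_{j_1}\cdots b_{j_k}$ with $1\le k\le M$, and a final layer of $A_p$-factors at depth $M$ parametrised by scales $u\, b_{j_1}\cdots b_{j_M}$.

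The three types of factors are then processed separately. The $\delta^{-1}$-exponent collected from the $M$ rounds forms a geometric sum whose ratio is $\sum_j b_j\gamma_j$, whose first $M$ terms add to $\lambda_0\cdot\frac{1-(\sum_j b_j\gamma_j)^M}{1-\sum_j b_j\gamma_j}$. For every intermediate $\tilde D_p(\tfrac{u}{2} b_{j_1}\cdots b_{j_k},B)$, parabolic rescaling (Lemma \ref{abc18}) trades the factor for $V_p(\delta^{1-\tfrac{u}{2} b_{j_1}\cdots b_{j_k}})\cdot \tilde D_p(1,B)$. The depth-$M$ layer $A_p(u b_{j_1}\cdots b_{j_M},B,u b_{j_1}\cdots b_{j_M})$ is first reduced to $D_p(u b_{j_1}\cdots b_{j_M},B)$ via \eqref{abc35}, then to $\tilde D_p(u b_{j_1}\cdots b_{j_M},B)$ via \eqref{abc36} (which contributes the extra factor $\delta^{-2u b_{j_1}\cdots b_{j_M}}$ whose weighted sum produces the $2u(\sum_j b_j\gamma_j)^M$ term in the $\delta^{-1}$-exponent), and finally rescaled once more to $V_p(\delta^{1-u b_{j_1}\cdots b_{j_M}})\cdot \tilde D_p(1,B)$.

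The main bookkeeping obstacle is confirming that all accumulated copies of $\tilde D_p(1,B)$ collapse into a single first power. The exponents $\tau_{j_1}\gamma_{j_2}\cdots\gamma_{j_k}$, summed over $1\le k\le M$ and all tuples, equal $1-(\sum_j\gamma_j)^M$, while the depth-$M$ contribution has total exponent $(\sum_j\gamma_j)^M$. The identity $\sum_i\gamma_i+\sum_i\tau_i=1$, immediate from the definitions \eqref{0720e3.48h}, ensures these add to exactly $1$. The threshold $p>\tfrac{72}{5}$ is precisely what is needed to make $\lambda_0$ finite and to ensure convergence of the three series $\sum_j b_j\gamma_j$, $\sum_j b_j w_j$, $\sum_j b_j\tau_j$ computed just before the statement; their closed-form values yield the explicit expression for $\lambda_0$ displayed there.
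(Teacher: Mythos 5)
Your proposal is correct and follows essentially the same route as the paper: iterate \eqref{0709e2.60k} $M$ times under the constraint $[2(\frac32)^r]^Mu\le 2$, sum the $\delta$-exponents geometrically with ratio $\sum_j b_j\gamma_j$, convert the depth-$M$ $A_p$-factors via \eqref{abc35} and \eqref{abc36}, rescale every $\tilde D_p$-factor by Lemma \ref{abc18}, and check via $\sum_i\gamma_i+\sum_i\tau_i=1$ that the $\tilde D_p(1,B)$ exponents total $1$. No gaps beyond the trivially omitted factor $u$ in the stated geometric sum.
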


\noindent {\bf The final step of the proof.} Now we come to the final step of the proof for the desired estimate \eqref{3004e2.3} at the critical exponent $p=20$. We will combine  Theorem \ref{abc37} with  Proposition \ref{0712prop1.10}.  For $p>\frac{72}{5}$ let $\eta_p$ be the unique number such that
\beq\label{0705g2.52}
\lim_{\delta\to 0} \frac{V_{p}(\delta)}{\delta^{-(\eta_p+\mu)}}=0, \text{ for each }\mu>0,
\endeq
and
\beq
\limsup_{\delta\to 0} \frac{V_{p}(\delta)}{\delta^{-(\eta_p-\mu)}}=\infty, \text{ for each }\mu>0.
\endeq
Let $B$ have radius $\delta^{-3}$. We substitute the bound $V_{p}(\delta)\lesim_{\mu} \delta^{-(\eta_p+\mu)} $ into the right hand side of \eqref{3004e2.77}, and obtain
\beq
A_p(u, B, u)\lesim_{r,M,K,\mu} \delta^{-\eta_{p, \mu, u, r, M}} \tilde{D}_p(1, B),
\endeq
where
\beq\label{0712e1.60}
\begin{split}
\eta_{p, \mu, u,r,M}& =u \lambda_0\cdot  \frac{1-(\sum_{j=0}^r b_j \gamma_j)^M}{1-(\sum_{j=0}^r b_j \gamma_j)} +2u(\sum_{j=0}^r b_j \gamma_j)^M\\
	& + (\mu+\eta_p) \left[ 1-u\cdot (\sum_{j=0}^r b_j \gamma_j)^M -\frac{u}{2}(\sum_{j=0}^r b_j \tau_j) \frac{1-(\sum_{j=0}^r b_j \gamma_j)^M}{1-(\sum_{j=0}^r b_j \gamma_j)} \right].
\end{split}
\endeq
By Cauchy--Schwarz,
\beq
\begin{split}
& \left\| (\prod_{i=1}^{\Lambda K} E_{R_i} g)^{\frac{1}{\Lambda K}} \right\|_{L_{\#}^p(w_B)} \le \delta^{-\frac{5u}{4}} \left\| (\prod_{i=1}^{\Lambda K} \sum_{R_{i, u}\subset R_i} |E_{R_{i, u}}g|^{\frac{8}{3}})^{\frac{3}{8{\Lambda K}}} \right\|_{L_{\#}^p(w_B)}\\
& \lesim \delta^{-\frac{5u}{4}} \left( \frac{1}{|\mathcal{B}_u(B)|} \sum_{B^u\in \mathcal{B}_u(B)}\left\| (\prod_{i=1}^{\Lambda K} \sum_{R_{i, u}\subset R_i} |E_{R_{i, u}}g|^{\frac{8}{3}})^{\frac{3}{8{\Lambda K}}} \right\|_{L_{\#}^p(w_{B^u})}^p \right)^{\frac{1}{p}} .
\end{split}
\endeq
By H\"older and Minkowski, this can be further bounded by
\beq
 \delta^{-\frac{5u}{4}} \left( \frac{1}{|\mathcal{B}_u(B)|} \sum_{B^u\in \mathcal{B}_u(B)} D_p(u, B^u)^p \right)^{\frac{1}{p}}= \delta^{-\frac{5u}{4}} A_p(u, B, u).
\endeq
So far we have obtained
\beq
\| (\prod_{i=1}^M E_{R_i} g)^{\frac{1}{M}} \|_{L_{\#}^p(w_B)}  \lesim_{r,M,K,\mu} \delta^{-\frac{5u}{4}-\eta_{p, \mu, u,r,M}} \tilde{D}_p(1, B).
\endeq
We recall that both sides depend on $g$ and $R_i$.
By taking the supremum over $g$ and $R_i$ (with fixed $K$) in the above estimate, we obtain
\beq
\label{abc43}
V_{p}(\delta, K)\lesim_{r,M,K,\mu} \delta^{-\tilde{\eta}_{p, \mu, u,r,M}},
\endeq
where
\beq
\tilde{\eta}_{p, \mu, u,r,M}:=\eta_{p, \mu, u,r,M}+\frac{5u}{4}.
\endeq
We move $\eta_p$ from the right hand side of the expression \eqref{0712e1.60} to the left hand  side, and then divide both sides by $u$ to obtain
\beq\label{0712e1.65}
\begin{split}
\frac{1}{u}(\tilde{\eta}_{p, \mu, u,r,M}-\eta_p)& =\frac{5}{4}+\frac{\mu}{u}+\lambda_0\cdot  \frac{1-(\sum_{j=0}^r b_j \gamma_j)^M}{1-(\sum_{j=0}^r b_j \gamma_j)}+2(\sum_{j=0}^r b_j \gamma_j)^M\\
		& -(\mu+\eta_p) \left[ (\sum_{j=0}^r b_j \gamma_j)^M +\frac{1}{2}(\sum_{j=0}^r b_j \tau_j) \frac{1-(\sum_{j=0}^r b_j \gamma_j)^M}{1-(\sum_{j=0}^r b_j \gamma_j)} \right].
\end{split}
\endeq
Our goal is to show that
\beq\label{0712f1.66}
\eta_{20}\le 2 (\frac{1}{2}-\frac{1}{20}).
\endeq
We argue by contradiction. Suppose for contradiction that
\beq
\label{abc50}
\eta_{20}> \frac{9}{10}.
\endeq
Then, for sufficiently small $\epsilon_1>0$ this forces
\beq\label{0712f1.67}
\eta_p>\frac{9}{10}, \text{ for each } p\in (20-\epsilon_1, 20).
\endeq
We rewrite the right hand side of \eqref{0712e1.65} as
\beq
\label{abc38}
\underbrace{\Big( \lambda_0-\frac{1}{2}\cdot (\mu+\eta_p)(\sum_{j=0}^r b_j \tau_j) \Big) \frac{1-(\sum_{j=0}^r b_j \gamma_j)^M}{1-(\sum_{j=0}^r b_j \gamma_j)}}_{\text{dominant term}} +\frac{5}{4}+\frac{\mu}{u}+(2-\mu-\eta_p)(\sum_{j=0}^r b_j \gamma_j)^M
\endeq
We have calculated that
$$
\sum_{j=0}^{\infty}b_j \gamma_j=\frac{6(13p-216)}{5p^2-94p+144}.
$$
The two crucial features for this quantity are as follows. First, when $p$ is smaller than (and sufficiently close to) the critical exponent 20,
\beq
\label{abc40}
\sum_{j=0}^{\infty}b_j \gamma_j>1.
\endeq
Second,
\beq
\label{abc42}
\lim_{p\to 20}\sum_{j=0}^{\infty}b_j \gamma_j=1.
\endeq
In addition to these, by a direct calculation we observe that
\beq
\label{abc41}
\lim_{p\to 20} \frac{\lambda_0}{\frac{1}{2}\cdot (\sum_{j=0}^{\infty} b_j\tau_j)}=\frac{9}{10}.
\endeq
Choose now $p$ close enough to 20,  $r$ and $M$ large enough, and then $\mu$  small enough.
By combining \eqref{0712f1.67}, \eqref{abc40}, \eqref{abc42} and \eqref{abc41} we get that for these values of $p,r,M,\mu$ we have
$$
\eqref{abc38}<0.
$$
Going back to \eqref{0712e1.65}, for these values of $p,\mu,r,M$ we conclude that
\beq\label{0712f1.70}
\tilde{\eta}_{p, \mu, u,r,M}<\eta_p.
\endeq
 Together with \eqref{0712f1.67}, for an exponent $p$ slightly smaller than the critical exponent $20$ and for $K$ large enough, Theorem \ref{abc37} implies that
\beq\label{0712f1.71}
V_p(\delta)\le \Omega_{K, p} \log_K \big(\frac{1}{\delta} \big)\max_{\delta\le \delta'\le 1}(\frac{\delta'}{\delta})^{2(\frac{1}{2}-\frac{1}{p})} V_p(\delta', K).
\endeq
We have two possibilities. First, if
\beq
\tilde{\eta}_{p, \mu, u,r,M}< 2(\frac{1}{2}-\frac{1}{p}),
\endeq
then \eqref{0712f1.71} combined with \eqref{abc43} forces $$V_p(\delta)\lesssim_\epsilon(\frac{1}{\delta})^{\epsilon+2(\frac{1}{2}-\frac{1}{p})}.$$ This is a contradiction to \eqref{0712f1.67}.

Second, if
\beq
\tilde{\eta}_{p, \mu, u,r,M}\ge 2(\frac{1}{2}-\frac{1}{p}),
\endeq
then again \eqref{0712f1.71} combined with \eqref{abc43} forces $$V_p(\delta)\lesssim_\epsilon (\frac{1}{\delta})^{\epsilon+\tilde{\eta}_{p, \mu, u,r,M}}.$$ This is a contradiction to \eqref{0712f1.70}. Since both cases lead to a contradiction, it can only be that our original assumption \eqref{abc50} is false. This  finishes the proof of \eqref{0712f1.66}.

\section{Appendix: Some linear algebra}

In this section, we prove Conjecture \ref{conjecture0} for $(d,k,l)=(2,3,2)$. At each point $(r, s)\in [0, 1]^2$, define five vectors
\beq\label{0730e5.1h}
\begin{split}
& \P_1(r, s)=\Phi_r(r, s)=(1, 0, 2r, s, 0, 3r^2, 2rs, s^2, 0)^T\\
& \P_2(r, s)=\P_s(r, s)=(0, 1, 0, r, 2s, 0, r^2, 2rs, 3s^2)^T\\
& \P_3(r, s)=\P_{rr}(r, s)=(0, 0, 2, 0, 0, 6r, 2s, 0, 0)^T\\
& \P_4(r, s)=\P_{rs}(r, s)=(0, 0, 0, 1, 0, 0, 2r, 2s, 0)^T\\
& \P_5(r, s)=\P_{ss}(r, s)=(0, 0, 0, 0, 2, 0, 0, 2r, 6s)^T.
\end{split}
\endeq
Here we use the transpose operation ``T'' to turn all row vectors to column vectors.  Moreover, we define the $9\times 2$ matrix
\beq
\mathcal{M}^{(1)}(r, s)=[\P_1(r, s)^T, \P_2(r, s)^T],
\endeq
and the $9\times 5$ matrix
\beq
\mathcal{M}^{(2)}(r, s)=[\P_1(r, s)^T, \P_2(r, s)^T, \P_3(r, s)^T, \P_4(r, s)^T, \P_5(r, s)^T].
\endeq
Take a linear subspace $V=\langle v_1, ..., v_{dim(V)}\rangle\subset \R^9$. For the sake of convenience, we also assume that $v_i$ is a column vector. Denote
\beq
\mc{M}_V^{(l)}(r, s)=(v_1, v_2, ..., v_{dim(V)})^T \times \mathcal{M}^{(l)}(r, s).
\endeq
We will prove
\begin{prop}
For each $l\in \{1, 2\}$, and each linear subspace $V\subset \R^9$ with dimension $dim(V)$, the matrix $\mc{M}_V^{(l)}$ has at least one minor determinant of order
\beq\label{0730e5.5h}
\Big[ \frac{dim(V)\cdot l(l+3)}{18} \Big]+1,
\endeq
which, viewed as a function of $(r, s)\in [0, 1]^2$, does not vanish identically.
\end{prop}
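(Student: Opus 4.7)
The plan is to identify $\R^9$ with the nine-dimensional polynomial space $\mc{P} := \{F \in \R[r,s] : \deg F \le 3,\ F(0,0) = 0\}$ via the linear isomorphism $v \leftrightarrow F_v := v \cdot \Phi$, where $\Phi = \Phi_{2,3}$. Under this identification, the row $v^T \mc{M}^{(l)}(r,s)$ becomes the truncated jet $\bigl((D^\alpha F_v)(r,s)\bigr)_{1\le|\alpha|\le l} \in \R^{m_l}$ with $m_1 = 2$ and $m_2 = 5$. Consequently, the generic rank of $\mc{M}_V^{(l)}(r,s)$ equals $\dim V - \dim(\mc{F}_V \cap N^{(l)}(r,s))$, where $\mc{F}_V \subset \mc{P}$ is the image of $V$ and $N^{(l)}(r,s) := \{F \in \mc{P} : (D^\alpha F)(r,s) = 0 \text{ for all } 1\le|\alpha|\le l\}$ has dimension $9 - m_l$. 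This recasts the problem as one about how $\mc{F}_V$ interacts with the varying family of kernels $N^{(l)}(r,s)$.

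For $l = 1$, I would argue that a drop of rank below $2$ forces every pair $F, G \in \mc{F}_V$ to satisfy $\nabla F \parallel \nabla G$ identically, so the Jacobian $\det(\nabla F, \nabla G)$ vanishes. Invoking the classical result that two polynomials in two variables with identically vanishing Jacobian lie in $\R[h]$ for a common polynomial $h$, we conclude $\mc{F}_V \subset \R[h]$. Enumerating $\deg h \in \{1, 2, 3\}$ under the constraints $\deg F \le 3$ and $F(0,0) = 0$ gives $\dim \mc{F}_V \le 3$. Hence for $\dim V \ge 4$ the rank is forced to be $2$, which is what is needed: the required bound $[2\dim V/9]+1$ equals $1$ for $\dim V \le 4$ (trivial since $\nabla F_v \not\equiv 0$ whenever $F_v \ne 0$) and equals $2$ for $5 \le \dim V \le 8$.

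For $l = 2$, the starting observation is the explicit parametrization
$$N^{(2)}(r_0, s_0) = \{G(r-r_0, s-s_0) - G(-r_0, -s_0) : G \in \mc{P}^{(3)}\},$$
where $\mc{P}^{(3)}$ is the $4$-dimensional space of homogeneous cubics in $(r,s)$; this yields a concrete $9 \times 4$ matrix $M(r_0, s_0)$ of polynomials of degree at most $2$ in $(r_0, s_0)$. A direct calculation shows $\sum_{(r,s)} N^{(2)}(r,s) = \mc{P}$, or equivalently $\bigcap_{(r,s)} W^{(2)}(r,s) = \{0\}$ where $W^{(2)}$ is the column span of $\mc{M}^{(2)}$. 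This identity immediately handles the cases $\dim V \in \{7,8\}$: there $V^\perp$ has dimension at most $2$ and cannot be contained in the pointwise $4$-dimensional family $N^{(2)}(r,s) = W^{(2)}(r,s)^\perp$ for all $(r,s)$. The case $\dim V = 9$ is consistent only under the convention that the bound is capped at the maximum possible rank $5$.

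The main obstacle is the intermediate range $\dim V \in \{4, 5, 6\}$, where the identity $\sum N^{(2)} = \mc{P}$ is too coarse. My plan is to exploit the degree filtration $\mc{F}_V^{\le 1} \subset \mc{F}_V^{\le 2} \subset \mc{F}_V$ together with the observation $N^{(2)}(r,s) \cap \mc{P}^{\le 2} = \{0\}$ (any polynomial of degree $\le 2$ whose first and second derivatives all vanish at one point is zero), which injects $\mc{F}_V \cap N^{(2)}(r,s)$ into the cubic quotient $\bar{\mc{F}}_V^{(3)} \subset \mc{P}^{(3)}$. For each admissible triple $(d_1, d_2, d_3)$ summing to $d$, I would analyze the linear system $M(r_0, s_0)\, g \in \mc{F}_V$ in the parameters $g \in \mc{P}^{(3)}$ and show, by computing appropriate polynomial minors, that its solution space has dimension at most $d - [5d/9] - 1$ for generic $(r_0, s_0)$. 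The delicate subcases arise when $\mc{F}_V$ contains many homogeneous cubics, because $N^{(2)}(r_0, s_0) \cap \mc{P}^{(3)}$ is the $1$-dimensional line through $(s_0 r - r_0 s)^3$, and the $2$-parameter family of such cubes sweeps out a surface in $\mc{P}^{(3)}$ that can interact nontrivially with $\bar{\mc{F}}_V^{(3)}$; disentangling these contributions requires the explicit minor computations the paper describes as "quite computationally involved."
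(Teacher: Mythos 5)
Your reduction is sound and is in fact the same framework the paper uses: identifying $v\in\R^9$ with the polynomial $F_v$, the rank of $\mc{M}_V^{(2)}(\xi)$ equals $\dim V-\dim\{F\in\mc{F}_V:\text{the full 2-jet of }F\text{ vanishes at }\xi\}$, which is exactly the quantity $\dim(\pi_{1,2}P_{\xi}(V))$ appearing in \eqref{bg00} (the paper's $P_\xi$ is the second-order Taylor polynomial and $\pi_{1,2}$ the projection onto the degree-one and degree-two parts, so its kernel on $\mc{P}$ is your $N^{(2)}(\xi)$). Your identity $\sum_{(r,s)}N^{(2)}(r,s)=\mc{P}$ does give a genuinely shorter disposal of $\dim V\in\{7,8\}$ than the paper's route (which settles $\dim V=8$ via Lemma \ref{bglemma3} and the splitting \eqref{bg5.1}), your description of $N^{(2)}(r_0,s_0)\cap S_3$ as the line through $(s_0r-r_0s)^3$ is correct, and your remark on $\dim V=9$ is consistent with the paper, whose list \eqref{bg00} stops at $\dim V=8$. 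Your $l=1$ sketch is plausible (the paper simply cites \cite{BD2} for $l=1$), though as written it needs the additional standard step producing a single $h$ with $\mc{F}_V\subset\R[h]$, not merely one $h$ per pair.

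The genuine gap is that for the dimensions carrying the real content of the proposition, $2\le\dim V\le 6$ (required generic ranks $2,2,3,3,4$), you give only a strategy: ``analyze the linear system $M(r_0,s_0)\,g\in\mc{F}_V$ \ldots by computing appropriate polynomial minors,'' with the delicate subcases explicitly deferred. That is precisely where the paper's proof lives: it establishes three genericity lemmas --- Lemma \ref{bglemma1} (two linearly independent homogeneous cubics have $\dim\pi_{1,2}P_\xi([f_1,f_2])=2$ a.e., proved via a generalized Wronskian argument), and Lemmas \ref{bglemma2}, \ref{bglemma3} ($\dim[\pi_{1,2}P_\xi(S_3)+[f]]=4$ a.e., resp. $\dim[\pi_{1,2}P_\xi(S_3)+[f,g,h]]=5$ a.e., proved by exhibiting nonvanishing minors of the explicit frame \eqref{bg1.4}) --- and then runs a case analysis over $\dim V=2,4,6,8$ organized through $\dim\pi_{1,2}(V)$ and $\dim(V\cap S_3)$ via \eqref{bg3.6}. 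None of these lemmas, nor substitutes for them, is proved in your proposal; the ``two-parameter family of cubes $(s_0r-r_0s)^3$ interacting with $\bar{\mc{F}}_V^{(3)}$'' that you flag as the difficulty is exactly the content of Lemma \ref{bglemma1}, which you would still have to prove. Note also that your list of remaining cases is too short: $\dim V\in\{2,3\}$ is not handled by the coarse span identity either (in the paper the $\dim V=2$ case already requires both Lemma \ref{bglemma1} and Lemma \ref{bglemma2}), so the open range is $2\le\dim V\le 6$, not $\{4,5,6\}$. As it stands the proposal sets up the correct dictionary and dispatches the easy extreme cases, but the heart of the proof is missing.
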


The case $l=1$ has been verified by Bourgain and Demeter \cite{BD2}. The rest of this section is devoted to the proof of this proposition for the case $l=2$. \\


Before we start the proof, we introduce some more notations. Let $r$ and $s$ be two variables. Define the vector spaces of polynomials
\beq
S_0=[1], S_1=[r, s], S_2=[r^2, rs, s^2], S_3=[r^3, r^2 s, r s^2, s^3].
\endeq
 For $\xi=(a, b)\in \R^2$, let
\beq\label{bg1.1}
\begin{split}
P_{\xi} f(r, s)& =f(\xi)+\partial_r f(\xi)\cdot (r-a)+\partial_s f(\xi)\cdot (s-b)\\
		& +\frac{1}{2}\partial_{rr} f(\xi)\cdot (r-a)^2+\partial_{rs} f(\xi)\cdot (r-a)(s-b)+\frac{1}{2}\partial_{ss} f(\xi)\cdot (s-b)^2
\end{split}
\endeq
be the Taylor expansion of order two of the function $f$ at $\xi$. Hence $P_{\xi}$ is a projection on $S_0\oplus S_1\oplus S_2$.

Denoting $\pi_{1, 2}=\pi_{S_1\oplus S_2}$, we have by \eqref{bg1.1}
\beq\label{bg1.2}
\begin{split}
\pi_{1, 2} P_{\xi} f(r, s)& =(\partial_r f(\xi)-a \partial_{rr} f(\xi)-b \partial_{rs} f(\xi))\cdot r+(\partial_s f(\xi)-a \partial_{rs} f(\xi)-b \partial_{ss} f(\xi))\cdot s\\
			& +\frac{1}{2} \partial_{rr} f(\xi) \cdot r^2+\partial_{rs} f(\xi)\cdot rs+\frac{1}{2} \partial_{ss} f(\xi)\cdot s^2.
\end{split}
\endeq
The action of $\pi_{1, 2}P_{\xi}$ on $S_3$ is given by
\beq\label{bg1.3}
\begin{split}
& \pi_{1, 2} P_{\xi} (r^3)=(-3a^2, 0, 3a, 0, 0),\\
& \pi_{1, 2} P_{\xi} (r^2 s)=(-2ab, -a^2, b, 2a, 0),\\
& \pi_{1, 2} P_{\xi} (rs^2)=(-b^2, -2ab, 0, 2b, a),\\
& \pi_{1, 2} P_{\xi} (s^3)=(0, -3b^2, 0, 0, 3b).
\end{split}
\endeq
Hence if $a\neq 0$ and $b\neq 0$, then $\pi_{1, 2} P_{\xi} (S^3)\subset S_1\oplus S_2$ is the three dimensional space generated by
\beq\label{bg1.4}
\begin{split}
& (a, 0, -1, 0, 0),\\
& (0, b, 0, 0, -1),\\
& (-b, -a, 0, 2, 0).
\end{split}
\endeq
\begin{lem}\label{bglemma1}
Assume $f_1, f_2\in S_3$ linearly independent. Then
\beq
\text{dim} (\pi_{1, 2} P_{\xi} ([f_1, f_2]))=2, \text{ for $\xi$ almost surely} .
\endeq
\end{lem}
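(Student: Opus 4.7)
The plan is to analyze the linear map $T_\xi := \pi_{1,2} P_\xi|_{S_3} \colon S_3 \to S_1 \oplus S_2$ directly: first identify its kernel explicitly, and then check that the $2$-plane $[f_1,f_2]$ meets this kernel only on a proper algebraic subset of $\R^2$. By \eqref{bg1.4}, the image of $T_\xi$ is three-dimensional whenever $ab \neq 0$, and since $\dim S_3 = 4$ this forces $\dim \ker T_\xi = 1$ at every such $\xi$. To pin down the kernel, I would either solve the linear system coming from the four column vectors in \eqref{bg1.3}, or, more conceptually, observe that $\pi_{1,2} P_\xi f = 0$ means exactly that the second-order Taylor expansion of $f$ at $\xi$ vanishes; for $f \in S_3$ this amounts to $f$ having a triple root at $(a,b)$. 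Either way one obtains
\[
\ker T_\xi \;=\; \R\cdot (br - as)^3 \;=\; \R\cdot\bigl(b^3 r^3 - 3ab^2 r^2 s + 3a^2 b r s^2 - a^3 s^3\bigr).
\]

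Next I would argue that, for the fixed $2$-plane $[f_1,f_2]$, the set of $\xi$ for which $(br-as)^3 \in [f_1,f_2]$ is a proper algebraic subvariety of $\R^2$. Suppose for contradiction that this inclusion held for every $\xi$. Since $\dim S_3 = 4$, the annihilator of $[f_1,f_2]$ inside $S_3^*$ is a nonzero $2$-dimensional subspace, so one may pick a nonzero linear functional $\ell \in S_3^*$ vanishing on $[f_1,f_2]$. The contradictory assumption would then give the polynomial identity
\[
\ell(r^3)\,b^3 \;-\; 3\ell(r^2s)\,ab^2 \;+\; 3\ell(rs^2)\,a^2 b \;-\; \ell(s^3)\,a^3 \;\equiv\; 0 \qquad \text{in } (a,b),
\]
and the linear independence of the monomials $b^3, ab^2, a^2b, a^3$ in $\R[a,b]$ forces $\ell$ to vanish on the basis $\{r^3, r^2s, rs^2, s^3\}$ of $S_3$, contradicting $\ell \neq 0$. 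Hence, off the union of the measure-zero sets $\{ab=0\}$ and $\{\xi : (br-as)^3 \in [f_1,f_2]\}$, one has $\ker T_\xi \cap [f_1,f_2] = \{0\}$, so $\dim T_\xi([f_1,f_2]) = 2$ as required.

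The only step with any real content is the identification of $\ker T_\xi$ as the line spanned by the perfect cube $(br-as)^3$; everything after that is just the classical projective fact that the rational normal curve in $\mathbb{P}(S_3)\simeq\mathbb{P}^3$ parameterized by $\xi \mapsto [(br-as)^3]$ is not contained in any $2$-plane. I do not anticipate a genuine obstacle, since the dimension count forces $\ker T_\xi$ to be one-dimensional, and the Taylor-remainder interpretation (a homogeneous cubic whose second-order Taylor expansion at $\xi$ vanishes must have a triple root there) pins down its spanning vector immediately.
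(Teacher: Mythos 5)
Your proof is correct, and it takes a genuinely different route from the paper's. The paper argues by contradiction at the level of the two given polynomials: if $\pi_{1,2}P_\xi f_1$ and $\pi_{1,2}P_\xi f_2$ were linearly dependent for all $\xi$, then by \eqref{bg1.2} the $2\times 5$ matrix of first and second partials of $f_1,f_2$ has rank one identically \eqref{bg2.2}, and the generalized Wronskian theorem (\cite{BD000}) plus a short case analysis (reduce to $f_2=f_1+g(s)$, then force $g$ constant or $\partial_r f_1=\partial_s f_1=0$) yields a contradiction. You instead analyze the map $T_\xi=\pi_{1,2}P_\xi|_{S_3}$ itself: since \eqref{bg1.4} gives a three-dimensional image for $ab\neq 0$, the kernel is one-dimensional, and exhibiting the cube $(br-as)^3$ in it (all derivatives of order $\le 2$ vanish at $(a,b)$) identifies $\ker T_\xi=\R\cdot(br-as)^3$; the lemma then reduces to the fact that no $2$-plane in $S_3$ contains the whole curve $\xi\mapsto (br-as)^3$, which your computation with a nonzero annihilating functional $\ell$ settles, since the coefficients of $\ell((br-as)^3)$ as a cubic in $(a,b)$ are exactly $\ell$ evaluated on the monomial basis of $S_3$. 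One small remark on bookkeeping: the ``for every $\xi$'' contradiction framing should be read as showing that the polynomial $(a,b)\mapsto \ell((br-as)^3)$ is not identically zero, so that the exceptional set $\{\xi:(br-as)^3\in[f_1,f_2]\}$ is contained in the zero set of a nontrivial polynomial and hence is null together with $\{ab=0\}$ --- which is precisely what your displayed identity delivers, so there is no gap. What your approach buys is an explicit, reusable description of the kernel and independence from the Wronskian lemma and case analysis; what the paper's approach buys is that it works directly with the derivative matrices in the same style as Lemmas \ref{bglemma2} and \ref{bglemma3}, without needing the rank computation for the image beyond what \eqref{bg1.3} already provides.
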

\begin{proof}
We argue by contradiction. Assume that $\pi_{1, 2}P_{\xi}f_1$ and $\pi_{1, 2}P_{\xi}f_2$ are linearly dependent for all $\xi$. By \eqref{bg1.2}, this means that
\beq\label{bg2.2}
rank
\begin{pmatrix}
\partial_r f_1, & \partial_s f_1, & \partial_{rr} f_1, & \partial_{rs} f_1, & \partial_{ss} f_1\\
\partial_r f_2, & \partial_s f_2, & \partial_{rr} f_2, & \partial_{rs} f_2, & \partial_{ss} f_2
\end{pmatrix}
=1.
\endeq
Hence
\beq
\det \begin{pmatrix}
\partial_r f_1, & \partial_r f_2\\
\partial_{rr} f_1, & \partial_{rr} f_2
\end{pmatrix}
=0=
\det \begin{pmatrix}
\partial_r f_1, & \partial_r f_2\\
\partial_{rs} f_1, & \partial_{rs} f_2
\end{pmatrix}
\endeq
implying linear dependence of $\partial_r f_1$ and $\partial_r f_2$ by the generalised Wronskian theorem (see for instance \cite{BD000}). Thus we may assume that $f_2=f_1+g(s)$ and since also
\beq
\det \begin{pmatrix}
\partial_r f_1, & \partial_r f_2\\
\partial_{s} f_1, & \partial_{s} f_2
\end{pmatrix}
=
\det \begin{pmatrix}
\partial_r f_1, & \partial_r f_1\\
\partial_{s} f_1, & \partial_{s} f_1+g'
\end{pmatrix}
=\partial_r f_1\cdot g',
\endeq
either $g$ is a constant, hence $g=0$ (contradiction) or $\partial_r f_1=0$. Similarly $\partial_s f_1=0$ so that $f_1$ is constant, which is again a contradiction.
\end{proof}

Denote $S=S_1\oplus S_2\oplus S_3$ and $V$ a subspace of $S$. We need to prove that almost surely in $\xi$,
\beq\label{bg00}
\begin{split}
& dim[(\partial_r f(\xi), \partial_s f(\xi), \partial_{rr} f(\xi), \partial_{rs} f(\xi), \partial_{ss} f(\xi)); f\in V]=dim(\pi_{1, 2} P_{\xi}(V))\\
& \ge  \begin{cases}
               5 \text{ if } dim(V)=8 \\
               4 \text{ if } dim(V)=6\\
               3 \text{ if } dim(V)=4\\
               2 \text{ if } dim(V)=2
            \end{cases}
\end{split}
\endeq
Assume
\beq\label{bg3.5}
dim (\pi_{1, 2} P_{\xi} (V))\le d \text{ for all } \xi.
\endeq
Taking $\xi=(0, 0)$ in \eqref{bg1.2}, clearly $\pi_{1, 2}P_{(0, 0)}(V)=P_{(0, 0)}(V)=\pi_{1, 2}(V)$. Hence by \eqref{bg3.5},
\beq\label{bg3.6}
dim(\pi_{1, 2}(V))\le d \text{ and } dim(V\cap S_3)\ge dim(V)-d.
\endeq
Recall that
\beq
\pi_{1, 2} P_{\xi}\Big|_{S_1\oplus S_2}=1_{S_1\oplus S_2} \text{ for all } \xi.
\endeq
We need the following additional lemmas.

\begin{lem}\label{bglemma2}
Fix $f\in S_1\oplus S_2$ with $f\neq 0$. Then
\beq
dim[\pi_{1, 2}P_{\xi} (S_3)+[f]]=4 \text{ for $\xi$ almost surely.}
\endeq
\end{lem}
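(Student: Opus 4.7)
The plan is to reduce the claim to a concrete polynomial non-vanishing statement, using the explicit description of $\pi_{1,2}P_\xi(S_3)$ already recorded in \eqref{bg1.3}--\eqref{bg1.4}. Since the conclusion only needs to hold off a null set of $\xi=(a,b)$, I will throughout restrict to the open set $ab\ne 0$, where \eqref{bg1.4} gives $\dim(\pi_{1,2}P_\xi(S_3))=3$, spanned by
$$e_1(\xi)=(a,0,-1,0,0),\quad e_2(\xi)=(0,b,0,0,-1),\quad e_3(\xi)=(-b,-a,0,2,0)$$
in the basis $(r,s,r^2,rs,s^2)$ of $S_1\oplus S_2$. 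Since adding $f$ either keeps the dimension $3$ (when $f\in \pi_{1,2}P_\xi(S_3)$) or increases it to $4$, it suffices to prove that the set
$$Z_f=\{\xi=(a,b): ab\ne 0 \text{ and } f\in \pi_{1,2}P_\xi(S_3)\}$$
has Lebesgue measure zero whenever $f\ne 0$.

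The key step is to translate membership $f\in \pi_{1,2}P_\xi(S_3)$ into explicit equations. Writing $f=c_1 r+c_2 s+c_3 r^2+c_4 rs+c_5 s^2$ and solving $f=\alpha e_1(\xi)+\beta e_2(\xi)+\gamma e_3(\xi)$ by reading off the $r^2,rs,s^2$ components gives $\alpha=-c_3$, $\beta=-c_5$, $\gamma=c_4/2$, and substituting into the $r$ and $s$ components turns the membership condition into the pair of polynomial equations
\begin{equation*}
2c_1+2c_3\, a+c_4\, b=0,\qquad 2c_2+c_4\, a+2c_5\, b=0.
\end{equation*}
Thus $Z_f$ is contained in the common zero set in $\R^2$ of these two affine polynomials in $(a,b)$.

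It remains to argue that at least one of these polynomials is not identically zero when $f\ne 0$. Vanishing of the first identically forces $c_1=c_3=c_4=0$, while vanishing of the second identically forces $c_2=c_4=c_5=0$. If both vanished identically, then all $c_i=0$, contradicting $f\ne 0$. Hence at least one of the two equations defines a proper affine line in $\R^2$, and so $Z_f$ is contained in a set of Lebesgue measure zero. This forces $\dim[\pi_{1,2}P_\xi(S_3)+[f]]=4$ for almost every $\xi$, completing the plan. I do not anticipate a serious obstacle here; the only mild subtlety is remembering to discard the measure zero locus $ab=0$ on which the description \eqref{bg1.4} of $\pi_{1,2}P_\xi(S_3)$ degenerates.
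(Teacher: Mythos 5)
Your proof is correct and follows essentially the same route as the paper: both use the explicit basis \eqref{bg1.4} of $\pi_{1,2}P_{\xi}(S_3)$ and reduce the claim to the fact that the two affine forms $c_1+c_3a+\tfrac12 c_4 b$ and $c_2+c_5b+\tfrac12 c_4 a$ cannot both vanish identically unless $f=0$ (the paper phrases this as a $4\times 5$ matrix having rank four for almost every $(a,b)$, which amounts to the same computation). The only cosmetic point is that when the linear part of the surviving form vanishes its zero set is empty rather than a line, but it is still null, so your conclusion stands.
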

\begin{proof}
In view of \eqref{bg1.4}, we need to show that for fixed $v\in \R^5\setminus \{0\}$,
\beq
\begin{pmatrix}
a & 0 & -1 & 0 & 0\\
0 & b & 0 & 0 & -1\\
b & a & 0 & -2 & 0\\
v_1 & v_2 & v_3 & v_4 & v_5
\end{pmatrix}
\endeq
has rank four for almost all $(a, b)$. The clearly amounts to the statement that $v_1+v_3 a+\frac{1}{2} v_4 b$ and $v_2+v_5 b+\frac{1}{2} v_4 a$ do not both identically vanish as functions of $a, b$.
\end{proof}

\begin{lem}\label{bglemma3}
Fix linearly independent $f, g, h$ in $S_1\oplus S_2$. Then
\beq
dim[\pi_{1, 2}P_{\xi}(S_3)+[f, g, h]]=5\text{ for $\xi$ almost surely.}
\endeq
\end{lem}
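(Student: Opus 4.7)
The plan is to reformulate the claim in terms of a quotient map. Let $L_1(v) = v_1 + a v_3 + \frac{b}{2} v_4$ and $L_2(v) = v_2 + \frac{a}{2} v_4 + b v_5$, and define the linear map $\Psi_\xi : S_1 \oplus S_2 \to \R^2$ by $\Psi_\xi(v) = (L_1(v), L_2(v))$. Solving for the equations cutting out $\pi_{1,2}P_\xi(S_3)$ from the generators in \eqref{bg1.4} shows that $\pi_{1,2}P_\xi(S_3) = \ker \Psi_\xi$ for $\xi = (a,b)$ with $a, b \neq 0$, so $\Psi_\xi$ realizes the quotient map $S_1 \oplus S_2 \to (S_1 \oplus S_2)/\pi_{1,2}P_\xi(S_3)$. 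The desired equality $\dim[\pi_{1,2}P_\xi(S_3) + [f,g,h]] = 5$ is therefore equivalent to surjectivity of $\Psi_\xi|_{[f,g,h]}$, that is, to saying that the $2 \times 3$ matrix with columns $\Psi_\xi(f), \Psi_\xi(g), \Psi_\xi(h)$ has rank $2$ for $\xi$ almost surely.

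I will argue by contradiction, assuming the three image columns are linearly dependent in $\R^2$ for every $\xi$. The key auxiliary step is the following classification: if $[u,v] \subset S_1 \oplus S_2$ is a $2$-dimensional subspace with $\Psi_\xi(u)$ and $\Psi_\xi(v)$ proportional for every $\xi$, then $[u,v] = [\ell, \ell^2]$ for some nonzero linear form $\ell = pr + qs$. The inclusion is a direct check: $\Psi_\xi(\ell) = (p,q)$ and $\Psi_\xi(\ell^2) = (ap+bq)(p,q)$, both lying on the line through the origin and $(p,q)$. For the converse, the identity $L_1(u)L_2(v) - L_2(u)L_1(v) \equiv 0$ expands as a polynomial in $(a,b)$ of degree $2$: vanishing of the constant coefficient gives $u^{(1)} \parallel v^{(1)}$ in $\R^2$, vanishing of the three degree-two coefficients gives $u^{(2)} \parallel v^{(2)}$ in $\R^3$, and vanishing of the degree-one coefficients enforces the compatibility $u^{(2)} \in \R \cdot (p^2, 2pq, q^2)$ once $u^{(1)} \in \R \cdot (p,q)$. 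Degenerate subcases (when some of $u^{(1)}, u^{(2)}, v^{(1)}, v^{(2)}$ vanish) are handled by the same bookkeeping together with the antisymmetric replacement $v \mapsto v - \lambda u$, which preserves all the conditions.

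Since $f, g, h$ are linearly independent, every pair spans a genuine 2-dimensional subspace, so the classification applies to each of $(f,g), (g,h), (f,h)$ and gives $[f,g] = [\ell_1, \ell_1^2]$, $[g,h] = [\ell_2, \ell_2^2]$, $[f,h] = [\ell_3, \ell_3^2]$. A short direct computation shows $[\ell_1, \ell_1^2] \cap [\ell_2, \ell_2^2] = \{0\}$ unless $\ell_1$ and $\ell_2$ are proportional: matching the $S_1$-parts of any common nonzero element forces $\ell_1 \parallel \ell_2$, after which matching the $S_2$-parts gives the same conclusion. Since $g$ is a nonzero element of $[f,g] \cap [g,h]$, we conclude $\ell_1 \parallel \ell_2$, and similarly $\ell_1 \parallel \ell_3$. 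Hence $f, g, h$ all lie in the single $2$-dimensional subspace $[\ell_1, \ell_1^2]$, contradicting linear independence. The main technical obstacle is the classification step itself: extracting the coefficients is routine, but several degenerate configurations must be tracked with care to rule out spurious solutions.
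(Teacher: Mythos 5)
Your proof is correct, and after the common (essentially forced) reduction it follows a genuinely different route from the paper's. Both arguments pass to the same $2\times 3$ matrix: your observation that $\pi_{1,2}P_{\xi}(S_3)=\ker\Psi_\xi$ for $ab\neq 0$ is just a repackaging of the paper's row reduction against the generators in \eqref{bg1.4}, and both then assume all three $2\times 2$ minors vanish identically in $(a,b)$. From there the paper is more economical: it reads off only the constant coefficients of the three minors (forcing the $S_1$-components of $f,g,h$ to span at most a line in $\R^2$) and the degree-two coefficients (forcing the $S_2$-components to span at most a line in $\R^3$), so the full coordinate matrix of $f,g,h$ has rank at most $2$, contradicting independence -- the degree-one coefficients are never used and no case analysis is needed. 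You instead prove a pairwise classification, using all coefficients: a $2$-plane $[u,v]\subset S_1\oplus S_2$ with $\Psi_\xi(u)\parallel\Psi_\xi(v)$ for all $\xi$ must be $[\ell,\ell^2]$ for a linear form $\ell$, and then show three independent vectors cannot have all pairs spanning such planes. This works (the antisymmetric replacement $v\mapsto v-\lambda u$ does preserve the bilinear, antisymmetric minor conditions, and the two degree-one equations cut out exactly $\R\cdot(p^2,2pq,q^2)$ once one generator is reduced to $(p,q,0,0,0)$), though your write-up of the degenerate subcases and of the intersection step ($[\ell_1,\ell_1^2]\cap[\ell_2,\ell_2^2]\neq\{0\}$ forces $\ell_1\parallel\ell_2$, using the $S_2$-parts when the common element has zero $S_1$-part) is only sketched. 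What your longer route buys is a sharper structural fact: it identifies precisely which $2$-dimensional subspaces of $S_1\oplus S_2$ fail the analogous five-dimensionality statement (exactly the planes $[\ell,\ell^2]$, e.g.\ $[r,r^2]$), which also explains why Lemma \ref{bglemma2} can only assert dimension four when a single $f$ is added.
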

\begin{proof}
Given linearly independent vectors $v, w, z$ in $\R^5$, we need to prove that
\beq
\begin{pmatrix}
a & 0 & -1 & 0 & 0\\
0 & b & 0 & 0 & -1\\
b & a & 0 & -2 & 0\\
v_1 & v_2 & v_3 & v_4 & v_5\\
w_1 & w_2 & w_3 & w_4 & w_5\\
z_1 & z_2 & z_3 & z_4 & z_5
\end{pmatrix}
\endeq
has rank five for almost all $(a, b)$. This amounts to showing that
\beq
\begin{pmatrix}
v_1+a v_3+\frac{1}{2} b v_4 & w_1+a w_3+\frac{1}{2} b w_4 & z_1+a z_3+\frac{1}{2} b z_4\\
v_2+b v_5+\frac{1}{2} a v_4 & w_2+b w_5+\frac{1}{2} a w_4 & z_2+b z_5+\frac{1}{2} a z_4
\end{pmatrix}
\endeq
has rank two for almost all $(a, b)$. If this were not the case, then
\beq
\det \begin{pmatrix}
v_1 & w_1\\
v_2 & w_2
\end{pmatrix}
=\det \begin{pmatrix}
v_1 & z_1\\
v_2 & z_2
\end{pmatrix}
=\det \begin{pmatrix}
w_1 & z_1\\
w_2 & z_2
\end{pmatrix}
=0,
\endeq
meaning that
$\begin{pmatrix}
v_1 & w_1 & z_1\\
v_2 & w_2 & z_2
\end{pmatrix}$
has rank one. Also
\beq
\det \begin{pmatrix}
v_3 & w_3\\
v_4 & w_4
\end{pmatrix}
=\det
\begin{pmatrix}
v_4 & w_4\\
v_5 & w_5
\end{pmatrix}
=\det
\begin{pmatrix}
v_3 & w_3\\
v_5 & w_5
\end{pmatrix}
=0
\endeq
and similarly for the pairs $(v, z)$ and $(z, w)$, implying that
\beq
\begin{pmatrix}
v_3 & w_3 & z_3\\
v_4 & w_4 & z_4\\
v_5 & w_5 & z_5
\end{pmatrix}
\endeq
also has rank at most one. Hence $dim[v, w, z]\le 2$, which leads to a contradiction.
\end{proof}

We are ready to prove \eqref{bg00}.

\begin{description}
\item[Case $dim(V)=8$.]

We prove by contradiction. Suppose $dim(\pi_{1, 2} P_{\xi}(V))\le 4$. It follows from \eqref{bg3.6} that $dim(\pi_{1, 2}(V))\le 4$, $dim(V\cap S_3)\ge 4$, hence $S_3\subset V$, $dim(\pi_{1, 2}(V))=4$,
\beq\label{bg5.1}
V=\pi_{1, 2}(V)\oplus S_3,
\endeq
\beq\label{bg5.2}
\pi_{1, 2}P_{\xi} (V)=\pi_{1, 2}(V)+\pi_{1, 2}P_{\xi}(S_3).
\endeq
It follows from Lemma \ref{bglemma3} that $dim(\ref{bg5.2})=5$ for $\xi$ almost surely. Contradiction.  \\

\item[Case $dim(V)=6$.]

We prove by contradiction. Assuming the contrary, it follows that there exists $d$ such that $3\ge d\ge dim(\pi_{1, 2}(V))$, $dim(V\cap S_3)\ge 6-d\ge 3$ by \eqref{bg3.6}.

{\bf Case 1: $dim(\pi_{1, 2}(V))=3$.}

Then $dim(V\cap S_3)=3$ and $V$ ia a co-dimension one subspace of $\pi_{1, 2}(V)\oplus S_3$. Hence $\pi_{1, 2}P_{\xi}(V)$ is a subspace of $\pi_{1, 2} P_{\xi}(S_3)+\pi_{1, 2}(V)$ of co-dimension at most one. By Lemma \ref{bglemma3}, the latter space is of dimension five almost surely, implying $dim(\pi_{1, 2}P_{\xi}(V))\ge 4$ almost surely. Contradiction.

{\bf Case 2: $dim(\pi_{1, 2}(V))<3$.}

Then necessarily $dim(\pi_{1, 2}(V))=2$, $S_3\subset V$ and
$$V=\pi_{1, 2}(V)\oplus S_3.$$
Then
$$\pi_{1, 2} P_{\xi}(V)=\pi_{1, 2}P_{\xi}(S_3)+\pi_{1, 2}(V)$$
has dimension four almost surely by Lemma \ref{bglemma2}. Contradiction.\\

\item[Case $dim(V)=4$.] We prove by contradiction. If $dim(\pi_{1, 2}P_{\xi}(V))\le 2$, then $dim(\pi_{1, 2}(V))\le 2$ and $dim(V\cap S_3)\ge 2$.

{\bf Case 1: $dim(\pi_{1, 2}(V))\ge 1$.}

By Lemma \ref{bglemma1}, $dim(\pi_{1, 2}P_{\xi}(V\cap S_3))\ge 2$ almost surely and since $dim(\pi_{1, 2}P_{\xi}(S_3))=3$, it follows that $\pi_{1, 2}P_{\xi}(V)$ is of co-dimension at most one in
$$\pi_{1, 2}P_{\xi}(V)+\pi_{1, 2}P_{\xi}(S_3)=\pi_{1, 2}P_{\xi}(\pi_{1, 2}(V)\oplus S_3)=\pi_{1, 2}(V)+\pi_{1, 2}P_{\xi}(S_3).$$ By Lemma \ref{bglemma2}, this space is of dimension at least four almost surely. Hence $$dim(\pi_{1, 2}P_{\xi}(V))\ge 3$$ almost surely. Contradiction.

{\bf Case 2: $dim(\pi_{1, 2}(V))= 0$.}

Hence $V=S_3$. $\pi_{1, 2}P_{\xi} (V)=\pi_{1, 2}P_{\xi} (S_3)$ is of dimension three almost surely. Contradiction. \\

\item[Case $dim(V)=2$.] If $dim(\pi_{1, 2}P_{\xi}(V))\le 1$, then $dim(\pi_{1, 2}(V))\le 1$ and $dim(V\cap S_3)\ge 1$.

{\bf Case 1: $dim(\pi_{1, 2}(V))= 1$.}

Since $dim(V\cap S_3)=1$, it follows in particular from Lemma \ref{bglemma1} that $$dim(\pi_{1, 2}P_{\xi}(V\cap S_3))=1$$ almost surely. Hence $\pi_{1, 2}P_{\xi} (V\cap S_3)$ is of co-dimension two in $\pi_{1, 2}P_{\xi}(S_3)$ and $\pi_{1, 2}P_{\xi}(V)$ is of co-dimension at most two in the space
$$\pi_{1, 2}P_{\xi}(V)+\pi_{1, 2}P_{\xi}(S_3)=\pi_{1, 2}(V)+\pi_{1, 2}P_{\xi}(S_3).$$
By Lemma \ref{bglemma2}, this space is of dimension four almost surely, implying that $$dim(\pi_{1, 2}P_{\xi} (V))\ge 2$$ almost surely. Contradiction.

{\bf Case 2: $\pi_{1, 2}(V)=\{0\}$.}

Then $V\subset S_3$ and Lemma \ref{bglemma1} gives again a contradiction.
\end{description}

\noindent School of Mathematics, Institute for Advanced Study, Princeton, NJ 08540\\
\emph{Email address}: bourgain@math.ias.edu\\

\noindent Department of Mathematics, Indiana University, 831 East 3rd St., Bloomington IN 47405\\
\emph{Email address}: demeterc@indiana.edu\\

\noindent Department of Mathematics, Indiana University, 831 East 3rd St., Bloomington IN 47405\\
\emph{Email address}: shaoguo@iu.edu


%


\begin{thebibliography}{20}
\bibitem{BCCT} Bennett, J.,  Carbery, A., Christ, M and  Tao, T. {\em  Finite bounds for H\"older--Brascamp--Lieb multilinear inequalities}, Math. Res. Lett. 17 (2010), no. 4, 647-666
\bibitem{BBFL} Bennett, J., Bez, N., Flock, T. and Lee, S. {\em Stability of Brascamp-Lieb constant and applications}, preprint available on arXiv
\bibitem{BD000} Bostan, A. and Dumas, P. {\em Wronskians and Linear Independence}, Amer. Math. Monthly 117 (2010), no. 8, 722-727.
\bibitem{BD1} Bourgain, J. and Demeter, C. {\em The proof of the $l^2$ Decoupling Conjecture}, Annals of Math. 182 (2015), no. 1, 351-389.
\bibitem{BD2} Bourgain, J. and Demeter, C. {\em Mean value estimates for  Weyl sums in two dimensions}, accepted to J. London Math. Soc.
\bibitem{BD4} Bourgain, J. and Demeter, C. {\em A study guide for the $l^2$ decoupling theorem}, to appear in Chinese Annals of Math.
\bibitem{BDG} Bourgain, J., Demeter, C.  and Guth, L.,  {\em Proof of the main conjecture in Vinogradov's mean value theorem for degrees higher than three}, to appear in Annals of Math.
\bibitem{BG} Bourgain, J. and  Guth, L. {\em Bounds on oscillatory integral operators based on multilinear estimates}, Geom. Funct. Anal. 21 (2011), no. 6, 1239-1295
\bibitem{Par} Parsell, S. T.,  Prendiville, S. M. and Wooley, T. D., {\em  Near-optimal mean value estimates for multidimensional Weyl sums} Geom. Funct. Anal. 23 (2013), no. 6, 1962-2024.
\bibitem{Wo} Wongkew, R. {\em Volumes of tubular neighbourhoods of real algebraic varieties}, Pacific J. Math. 159 (1993), no. 1, 177-184.
\bibitem{Woo} Wooley, T. {Translation invariance, exponential sums, and Waring's problem}, available on arxiv.
\end{thebibliography}
\end{document}